\documentclass[12pt,a4paper]{amsart}
\pagestyle{plain}

\usepackage{amssymb,amsthm,mathrsfs,upgreek}
\usepackage{fullpage}
\usepackage{euscript}

\usepackage[arrow, matrix, curve]{xy}

\usepackage{hyperref}

\usepackage{graphicx}
\usepackage[normalem]{ulem}


\usepackage{tikz}
\usetikzlibrary{calc,intersections}

\usepackage{extarrows, mathtools}
\makeatletter
\newcommand*{\da@rightarrow}{\mathchar"0\hexnumber@\symAMSa 4B }
\newcommand*{\da@leftarrow}{\mathchar"0\hexnumber@\symAMSa 4C }
\newcommand*{\xdashrightarrow}[2][]{%
\mathrel{%
\mathpalette{\da@xarrow{#1}{#2}{}\da@rightarrow{\,}{}}{}%
}%
}
\newcommand*{\da@xarrow}[7]{%
\sbox0{$\ifx#7\scriptstyle\scriptscriptstyle\else\scriptstyle\fi#5#1#6\m@th$}%
\sbox2{$\ifx#7\scriptstyle\scriptscriptstyle\else\scriptstyle\fi#5#2#6\m@th$}%
\sbox4{$#7\dabar@\m@th$}%
\dimen@=\wd0 %
\ifdim\wd2 >\dimen@
\dimen@=\wd2 %
\fi
\count@=2 %
\def\da@bars{\dabar@\dabar@}%
\@whiledim\count@\wd4<\dimen@\do{%
\advance\count@\@ne
\expandafter\def\expandafter\da@bars\expandafter{%
\da@bars
\dabar@ 
}%
}%
\mathrel{#3}%
\mathrel{%
\mathop{\da@bars}\limits
\ifx\\#1\\%
\else
_{\copy0}%
\fi
\ifx\\#2\\%
\else
^{\copy2}%
\fi
}%
\mathrel{#4}%
}
\makeatother


\newcommand{\pr}{ \operatorname{{pr}}}

\newcommand{\Supp}{\operatorname{{Supp}}}
\newcommand{\red}{\operatorname{{red}}}
\newcommand{\reg}{\operatorname{{reg}}}
\newcommand{\Sing}{\operatorname{{Sing}}}

\newcommand{\Aut}{\operatorname{{Aut}}}
\newcommand{\SAut}{\operatorname{{SAut}}}

\newcommand{\GL}{\operatorname{GL}}

\newcommand{\Pic}{\operatorname{{Pic}}}

\newcommand{\Fl}{\operatorname{{Fl}}}
\newcommand{\W}{\operatorname{W}}

\newcommand{\aaa}{\mathrm{a}}
\newcommand{\s}{\mathrm{s}}
\newcommand{\m}{\mathrm{m}}

\newcommand{\g}{\mathrm{g}}

\newcommand{\Ga}{{\mathbb G}_{\mathrm{a}}}
\newcommand{\Gm}{{\mathbb G}_{\m}}
\newcommand{\ZZ}{{\mathbb{Z}}}
\newcommand{\QQ}{{\mathbb{Q}}}
\newcommand{\PP}{{\mathbb{P}}}
\newcommand{\FF}{{\mathbb{F}}}
\newcommand{\A}{{\mathbb{A}}}

\newcommand{\CC}{{\mathbb{C}}}


\newcommand{\fL}{{\mathfrak{L}}}


\newcommand{\SSS}{{\mathscr{S}}}
\newcommand{\NNN}{{\mathscr{N}}}
\newcommand{\OOO}{{\mathscr{O}}}

\newcommand{\xref}[1]{\textup{\ref{#1}}}
\renewcommand\labelenumi{\rm (\roman{enumi})}
\renewcommand\theenumi{\rm (\roman{enumi})}

\makeatletter
\@addtoreset{equation}{subsection}
\makeatother


\newtheorem{mthm}{Theorem}

\swapnumbers
\newtheorem{thm}[equation]{Theorem}
\newtheorem{cor}[equation]{Corollary}
\newtheorem{lem}[equation]{Lemma}
\newtheorem{claim}[equation]{Claim}
\newtheorem{prop}[equation]{Proposition}
\theoremstyle{definition}
\newtheorem*{definition*}{Definition}
\newtheorem{defi}[equation]{Definition}

\newtheorem{nota}[equation]{Notation}
\newtheorem{rem}[equation]{Remark}

\newtheorem{sit}[equation]{}

\title[Flexible cones over Fano-Mukai fourfolds]{Affine cones over Fano-Mukai fourfolds of genus $10$ are flexible}
\author{Yuri Prokhorov}\thanks{
The research of the first author
was partially supported by the HSE University Basic Research Program, Russian Academic Excellence Project ``5-100''.
}

\author{Mikhail Zaidenberg}
\address{\emph{Yuri Prokhorov}\newline
Steklov Mathematical Institute, Moscow, Russian Federation
\newline
National Research University Higher School of Economics, Moscow, 
Russian Federation
\newline
Department of Algebra, 
Moscow State Lomonosov University, Russian Federation
}
\email{prokhoro@mi-ras.ru}
\address{\emph{Mikhail Zaidenberg}\newline Universit\'e Grenoble Alpes, CNRS, Institut Fourier, F-38000 Grenoble, France}
\email{Mikhail.Zaidenberg@univ-grenoble-alpes.fr}

\keywords{Flexible affine variety, affine cone, Fano fourfold}
\subjclass {Primary 14J35, 14J45; Secondary 14R10, 14R20}
\begin{document}

\begin{abstract} 
We show that the affine cones over any Fano--Mukai fourfold of genus $10$ are flexible in the sense of \cite{AFKKZ13}. 
In particular, the automorphism group of such a cone acts highly transitively outside the vertex. 
Furthermore, any Fano--Mukai fourfold of genus $10$, with one exception, admits a covering by open charts isomorphic to $\A^4$. 
\end{abstract}
\date{} 
\maketitle

\setcounter{tocdepth}{1}\tableofcontents

\section*{Introduction} 
Our base field in this paper is the complex number field $\CC$. 
Let $X$ be an affine variety over $\CC$. Consider the subgroup $\SAut(X)$ of the automorphism group $\Aut(X)$ generated by all the one-parameter unipotent subgroups of $\Aut(X)$. The variety $X$ is called \emph{flexible} if ${\SAut}(X)$ acts highly transitively on the smooth locus ${\reg}(X)$, that is, $m$-transitively for any natural number $m$ \cite{AFKKZ13}. 
There are many examples of flexible affine varieties, see, e.g., \cite{AKZ12, AFKKZ13, MPS18}; 
studies on this subject are in an active phase. 
For a projective variety, the flexibility of the affine cones might depend on the choice of an ample polarization. 
If the affine cone over a smooth projective variety $V$ with Picard number one is flexible, then $V$ is a Fano variety. 
For a pluri-anticanonical polarization of a Fano variety $V$, the flexibility of the affine cone $X$ over $V$ is a nontrivial new invariant of $V$.
Among del Pezzo surfaces with their pluri-anticanonical polarizations, only the surfaces of degree $\ge 4$ have flexible affine cones, see \cite{Pe13, PW16}.
Moreover, the group $\SAut(X)$ of the affine cone $X$ over $(V, -mK_V)$, $m>0$, is trivial for any del Pezzo surface $V$ of degree at most $3$, see \cite{CPW16a, KPZ11}. 
The affine cones over flag varieties of dimension $\ge 2$ are flexible \cite{AKZ12}. 
The secant varieties of the Segre-Veronese varieties provide another class of examples \cite{MPS18}. 

There are many examples of Fano threefolds 
with flexible affine cones, see, e.g., the survey article~\cite{CPPZ20} and the references therein. However, we know just two examples of such Fano fourfolds, 
and one of these is a special Fano-Mukai fourfold of genus 10, see \cite[Theorem 14.3]{PZ18}. In the following theorem we extend the latter result to all the Fano-Mukai fourfolds of genus 10.

\begin{mthm}
\label{mthm} 
Let $V=V_{18}$ be a Fano-Mukai fourfold of genus $10$. 
Then the affine cone over $V$ is flexible for any ample polarization of $V$. 
\end{mthm}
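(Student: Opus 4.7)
The plan is to apply the polar cylinder method for affine cones, going back to \cite{KPZ11} and generalized in \cite{AFKKZ13} and the authors' earlier work. Recall that a sufficient condition for the affine cone $X$ over a polarized Fano variety $(V,H)$ of Picard number one to be flexible is that $V$ be covered by Zariski-open subsets of the form $U\cong Z\times\A^1$. Since $V_{18}$ has Picard number one, every effective divisor is proportional to $H$, so no additional $H$-polarity condition arises: each such cylindrical chart automatically yields a $\Ga$-action on the affine cone $X=\Spec\bigoplus_{n\ge 0}H^0(V,nH)$ fixing the vertex. This observation in particular reduces the theorem to the single polarization $H=-\tfrac12 K_V$.

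First, I would prove the second assertion of the abstract: every Fano--Mukai fourfold $V=V_{18}$ of genus $10$, except for one distinguished member of the family, admits a covering by Zariski-open charts isomorphic to $\A^4$. This is a purely birational-geometric statement, which I would establish by producing explicit Sarkisov links or double/triple projections from distinguished subvarieties (points, lines, conics, or special surfaces) on $V$ that exhibit open affine charts isomorphic to $\A^4$. Fano--Mukai fourfolds of genus $10$ carry a rich family of such constructions coming from their realization as linear sections of a Mukai homogeneous variety; the nontrivial content is verifying that the $\A^4$-loci of these links jointly cover $V$.

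Second, granted the $\A^4$-covering, flexibility of the cone follows by a now-standard argument. Each chart $U\cong\A^4$ carries a $4$-dimensional family of commuting $\Ga$-actions (coordinate translations), and each of these extends to a $\Ga$-action on $V$ that is trivial on the boundary $D:=V\setminus U$; it therefore lifts to a $\Ga$-action on $X$ fixing the vertex, since some positive multiple of $D$ is linearly equivalent to $mH$. Letting the chart and the translation direction vary, the tangent vectors at a general smooth point of $X$ produced by these $\Ga$-actions will span the full tangent space; by the main flexibility criterion of \cite{AFKKZ13} this implies that $\SAut(X)$ acts transitively on $\reg(X)$, from which high transitivity, and hence flexibility, follow automatically.

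The principal obstacle is the construction of the $\A^4$-covering itself: this requires detailed control of the projective and birational geometry of $V_{18}$, including the classification of its special subvarieties and a careful analysis of the resolutions of the associated Sarkisov links. A secondary obstacle is the exceptional $V_{18}$ that admits no such covering; for it one must establish flexibility of the cone by alternative means, exploiting the (presumably larger) automorphism group of this special fourfold to produce enough unipotent subgroups of $\SAut(X)$, extending the strategy of \cite[Theorem 14.3]{PZ18} from a single special member to this remaining case.
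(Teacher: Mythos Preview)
Your overall architecture is correct and matches the paper: reduce to producing polar open charts (automatic since $\Pic(V)=\ZZ$), cover $V$ by $\A^4$-charts arising as complements of hyperplane sections $A_S$ for cubic cones $S\subset V$, and then invoke a flexibility criterion for affine cones. Where your proposal goes wrong is in the identification and treatment of the exceptional member. The case already handled in \cite[Theorem~14.3]{PZ18} is $V^{\s}_{18}$ with $\Aut^0(V)=\GL_2(\CC)$, i.e.\ the fourfold with the \emph{largest} automorphism group; it is precisely there that a one-parameter family of $\A^4$-charts covers $V$. The fourfold that fails to be covered by $\A^4$-charts is $V^{\aaa}_{18}$ with $\Aut^0(V)=\Ga\times\Gm$, the member with the \emph{least} useful automorphism group. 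Your plan to ``exploit the (presumably larger) automorphism group'' of the exceptional fourfold therefore targets the wrong variety and cannot work: for $V^{\aaa}_{18}$ there are only four cubic cones, and the four charts $V\setminus A_{S_i}\cong\A^4$ leave a residual projective line $l_{2,3}$ uncovered.

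The paper's remedy for $V^{\aaa}_{18}$ is genuinely different from anything in your outline. One proves a mixed criterion (Proposition~\ref{thm:criterion-new}) combining the transversal-cylinder criterion of \cite{Pe13} with the flexible-chart criterion of \cite{MPS18}: it suffices that each point of the residual locus $l_{2,3}$ lie in a principal polar $\A^2$-cylinder. These $\A^2$-cylinders are then constructed by a two-step Sarkisov link $V\dashrightarrow W_5\dashrightarrow Q^4$ (projection from a general cubic scroll, then from a line on the quintic del Pezzo fourfold), reducing to an explicit $\A^2$-cylinder on the smooth quadric fourfold. This birational construction, and the hybrid criterion needed to make it apply, are the missing ingredients in your proposal; without them the argument does not close for $V^{\aaa}_{18}$. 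Your tangent-space spanning argument via \cite{AFKKZ13} is also looser than what is actually used: the paper relies on the sharper criteria of \cite{Pe13} and \cite{MPS18}, which avoid having to verify spanning at every smooth point of the cone.
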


The proof exploits the criteria of flexibility of affine cones borrowed from \cite[Theorem~5]{Pe13} and \cite[Theorem~1.4]{MPS18}; cf.~also \cite[Theorem~2.4]{Per20}. These criteria are based on existence of some special open coverings of the underlined projective variety, for instance, a covering by flexible affine charts, or by suitable toric affine varieties, or by the affine spaces, see Theorem~\ref{thm:criterion-MPS}. To apply these criteria, one needs to construct such a covering. To this end, we use Theorem~\ref{thm:main} below.

We use the notation of \cite{PZ18}. Let $V_{18}$ be a Fano-Mukai fourfold of genus $10$ and degree $18$ half-anticanonically embedded in $\PP^{12}$.
Recall (\cite{KapustkaRanestad2013}, \cite[Remark~13.4]{PZ18}) that the moduli space of these fourfolds is one-dimensional. 
It contains two special members, namely, $V^{\aaa}_{18}$ with $\Aut^0(V^{\aaa}_{18})=\Ga\times\Gm$ and $V^{\s}_{18}$ with 
$\Aut^0(V)=\GL_2(\CC)$ \cite[Theorem~1.3.a,b]{PZ18}. For the general member $V^{\g}_{18}\notin \{V^{\aaa}_{18},\, V^{\s}_{18}\}$ one has $\Aut^0(V^{\g}_{18})=\Gm^2$ \cite[Theorem~1.3.c]{PZ18}. 

Any Fano-Mukai fourfold $V=V_{18}$ can be represented, at least in two ways, as an $\Aut^0(V)$-equivariant compactification of the affine space $\A^4$ \cite[Theorem~1.1]{PZ18}. More precisely, there exists at least two different $\Aut^0(V)$-invariant hyperplane sections $A_1, A_2$ of $V$ such that $U_i:=V\setminus A_i$ is isomorphic to $\A^4$, $i=1,2$. Statement (a) of the following theorem is proven in \cite[Theorem~13.5(f)]{PZ18}. 

\begin{mthm}
\label{thm:main} For the Fano-Mukai fourfolds $V=V_{18}$ of genus $10$ the following hold.
\begin{enumerate}
\renewcommand\labelenumi{\rm (\alph{enumi})}
\renewcommand\theenumi{\rm (\alph{enumi})}
\item
\label{thm:main-a}
If $\Aut^0(V)=\GL_2(\CC)$ then $V$ admits a covering by a one-parameter family of Zariski open subsets $U_t$ isomorphic to $\A^4$, where each $U_t$ is the complement of a hyperplane section of $V$. Exactly two of these sets are $\Aut^0(V)$-invariant;
\item
\label{thm:main-b}
if $\Aut^0(V)=\Gm^2$ then $V$ is covered by six $\Aut^0(V)$-invariant subsets $U_i\cong\A^4$ as in~\ref{thm:main-a}, $i=1,\ldots,6$;
\item
\label{thm:main-c} if $\Aut^0(V)=\Ga\times\Gm$ then there exist in $V$ four $\Aut^0(V)$-invariant affine charts $U_i\cong\A^4$, $i=1,\ldots,4$ such that $V\setminus\bigcup_{i=1}^4 U_i$ is a projective line covered by a one-parameter family of $\A^2$-cylinders $U_t\cong \A^2\times Z_t$, where $Z_t$ is a smooth affine surface, $t\in \PP^1$. 
\end{enumerate}
\end{mthm}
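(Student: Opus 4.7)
Part~\ref{thm:main-a} is established in \cite[Theorem~13.5(f)]{PZ18}, so the plan focuses on~\ref{thm:main-b} and~\ref{thm:main-c}. In both cases the starting point is the pair of $\Aut^0(V)$-invariant charts $U_1 = V\setminus A_1 \cong \A^4$ and $U_2 = V\setminus A_2 \cong \A^4$ furnished by \cite[Theorem~1.1]{PZ18}. The general strategy is to decompose $H^0(V,\OOO_V(1))$ into $\Aut^0(V)$-semi-invariant subspaces, enumerate the invariant hyperplane sections (these are precisely the one-dimensional weight subspaces, together with, in case~\ref{thm:main-c}, the additional condition of being annihilated by the nilpotent derivation from $\Lie(\Ga)$), and then enlarge the family $\{A_1, A_2\}$ using the finite ``Weyl-like'' group $W = N_{\Aut(V)}(\Aut^0(V))/\Aut^0(V)$ to transport the $\A^4$-chart structure to each invariant section in the orbit. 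The covering property then reduces to computing the common base locus $\bigcap A_i$.

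For~\ref{thm:main-b} with $\Aut^0(V) = \Gm^2$, the $13$-dimensional representation $H^0(V,\OOO_V(1))$ splits as a direct sum of weight lines, and the invariant hyperplane sections correspond to the characters occurring with multiplicity one. I would identify a finite subgroup $W\subset \Aut(V_{18}^{\g})$ (of order $6$ or $12$) normalizing $\Gm^2$ and acting on the weight diagram in a way that makes the orbit of the characters of $A_1, A_2$ consist of six weights $\chi_1,\ldots,\chi_6$. The invariant sections $A_i$ of weight $\chi_i$ are then $W$-conjugate, so each $U_i := V\setminus A_i$ is isomorphic to $\A^4$. The identity $\bigcup U_i = V$ is equivalent to $\bigcap_{i=1}^6 A_i = \emptyset$, which I would verify on the explicit Mukai model of $V^{\g}_{18}$ used in \cite{PZ18}.

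For~\ref{thm:main-c} with $\Aut^0(V) = \Ga\times\Gm$, the presence of the unipotent factor cuts down the semi-invariants, and the analogous analysis should produce exactly four invariant hyperplane sections $A_1,\ldots,A_4$, each complement being $\A^4$ via transport from $U_1$ or $U_2$ by the (smaller) finite normalizer. The residual set $B := \bigcap_{i=1}^4 A_i$ is $\Aut^0(V)$-invariant and of positive dimension since only four linear invariants are available; classifying one-dimensional $(\Ga\times\Gm)$-invariant subvarieties of $V^{\aaa}_{18}$ (using the orbit structure established in \cite{PZ18}) should show that $B$ is a smooth rational curve $\ell \cong \PP^1$. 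To cover $\ell$ by cylinders, I would construct a $\PP^1$-family of non-invariant hyperplane sections $H_t$, each avoiding exactly one point of $\ell$, such that $U_t := V\setminus H_t$ carries a second $\Ga$-action commuting with the given $\Ga\subset\Aut^0(V)$; the resulting $\Ga^2$-action with generic orbit $\A^2$ then yields the decomposition $U_t \cong \A^2\times Z_t$ for a smooth affine surface $Z_t$ of orbits.

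The principal obstacle is expected in~\ref{thm:main-c}: the second $\Ga$-action on the non-invariant charts $U_t$ is not produced by the automorphism group of $V^{\aaa}_{18}$ alone, so its construction must rely on the detailed projective and birational geometry of $V^{\aaa}_{18}$ developed in \cite{PZ18}, for instance via locally nilpotent derivations arising from Sarkisov-type links or from pencils of lines on $V$ meeting $\ell$. By contrast, the combinatorial weight analysis in~\ref{thm:main-b} should be essentially formal once the normalizer of $\Gm^2$ in $\Aut(V_{18}^{\g})$ has been computed, and the covering property should follow by a direct check on the explicit model.
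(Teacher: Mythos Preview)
Your strategy diverges substantially from the paper's, and in part~\ref{thm:main-c} it has a genuine gap.

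\textbf{Comparison for \ref{thm:main-b}.} The paper never touches the weight decomposition of $H^0(V,\OOO_V(1))$ or the finite group $\Aut(V)/\Aut^0(V)$. Instead it works entirely through the geometry of \emph{cubic cones}: by Lemma~\ref{lemma:AS}\ref{lemma:AS:1} one has $V\setminus A_S\cong\A^4$ whenever $S$ is a cubic cone, and Lemma~\ref{lem:cubic-cones:2}\ref{lem:cubic-cones:2:cycle} shows that $V^{\g}_{18}$ contains exactly six cubic cones, all $\Aut^0(V)$-invariant. The covering statement $\bigcap A_{S_i}=\varnothing$ is then proved (Corollary~\ref{cor:Ai-intersection}) by the Borel fixed point theorem: any point in the intersection would be a torus fixed point, but Lemma~\ref{lem:torus-fixed-points} pins these down as the six cone vertices, and one checks each vertex lies outside some $A_{S_j}$. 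Your route via a Weyl-like normalizer transporting two known charts to six could in principle succeed, but it trades a short fixed-point argument for two unproven inputs (the structure of $N_{\Aut(V)}(\Gm^2)$ and an explicit base-locus computation on the Mukai model). The paper's approach buys uniformity: the same cubic-cone bookkeeping handles \ref{thm:main-b} and the first half of \ref{thm:main-c} simultaneously.

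\textbf{Gap in \ref{thm:main-c}.} The paper again uses cubic cones (there are exactly four, forming a chain, Lemma~\ref{lem:cubic-cones:2}\ref{lem:cubic-cones:2:iii}) and proves $\bigcap_{j=1}^4 A_{S_j}=l_{2,3}$ by a delicate analysis of the $\Ga\times\Gm$-invariant lines and fixed points (Proposition~\ref{prop:intersection-of-Ai}). For the $\A^2$-cylinders covering $l_{2,3}$, however, the paper does \emph{not} produce a second $\Ga$-action on any open chart of $V$. Instead it runs two successive Sarkisov links: first the projection from a general cubic scroll $S\in\SSS(V)$, giving $V\setminus A_S\cong W_5\setminus B_F$ on the del Pezzo quintic fourfold (diagram~\eqref{eq:link}); then on $W_5$ the projection from a carefully chosen line, landing in a smooth quadric $Q\subset\PP^5$ (diagram~\eqref{diag:link1}); finally on $Q$ a tangent-hyperplane projection yields the explicit product \eqref{eq:projection}. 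Your proposal to manufacture a commuting $\Ga^2$-action directly on $V$ is not supported by anything in \cite{PZ18}, and there is no evident locally nilpotent derivation transverse to the given $\Ga$ on the non-invariant charts $U_t$. Without the Sarkisov-link construction (Propositions~\ref{prop:cylinder0} and~\ref{prop:A2-cylinders}), this step does not go through.
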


Some other families of Fano varieties demonstrate similar properties:
the Fano threefolds of degree~$22$ and Picard number~$1$ \cite{Kuznetsov-Prokhorov, Prokhorov-1990c} and the Fano threefolds of degree~$28$ and
Picard number~$2$ \cite[\S~9]{Prz-Ch-Shr:19}. Any member of the first family 
is a compactification of the affine $3$-space. It is plausible that the same is true for the second family. 
It would be interesting to investigate the flexibility of the affine cones over these varieties.

The paper is organized as follows. In Sections~\ref{sec:prelim} and~\ref{sec:Aut-action} we gather necessary preliminaries, in particular, some results from \cite{KapustkaRanestad2013, PZ16, PZ18}; cf.~also \cite{PZ17}. Based on this, in Section~\ref{sec:Aut-action} we prove Theorems~\ref{mthm} and~\ref{thm:main} in the case where $\Aut^0(V)=\Gm^2$. In
Section~\ref{sec:general-case} we proceed with the proof of Theorem~\ref{thm:main} in the case $\Aut^0(V)=\Ga\times\Gm$. Besides, we provide a criterion of flexibility related to the existence of a special family of $\A^2$-cylinders on $V$. Such a family of cylinders is constructed in Section~\ref{section:quadric} for the smooth quadric fourfold $Q^4$ and for the del Pezzo quintic fourfold $W_5$, and in Section~\ref{sec:V18-aff-cones} for any Fano-Mukai fourfold $V$ of genus $10$. This enables us to complete the proofs of Theorems~\ref{mthm} and~\ref{thm:main} in the remaining case where $\Aut^0(V)=\Ga\times\Gm$. 
\subsubsection*{Acknowledgment.} 
The authors are grateful to Alexander Perepechko for a careful reading of the paper
and valuable remarks.

\section{Cubic scrolls in the Fano-Mukai fourfolds $V_{18}$}
\label{sec:prelim}
In this section we recall and extend some facts from \cite{PZ18} used in the sequel. 
Throughout the paper we let $V$ be a 
Fano-Mukai fourfold $V=V_{18}$ of genus $10$ half-anticanonically embedded in $ \PP^{12}$.
Following~\cite{KapustkaRanestad2013} we call a \emph{cubic scroll} both a smooth cubic surface
scroll and a cone over a rational twisted cubic curve. The latter cones in 
$\PP^4$ will be called \emph{cubic cones} for short. 

\begin{thm}[{\cite[Propositions~1 and~2 and the proof of 
Proposition~4]{KapustkaRanestad2013}}]
\label{thm:KaRa} $\,$
\begin{enumerate}
\renewcommand\labelenumi{\rm (\alph{enumi})}
\renewcommand\theenumi{\rm (\alph{enumi})}
\item
\label{thm:KaRa-a} 
Let $\Sigma(V)$ be
the Hilbert scheme of lines in $V$. Then $\Sigma(V)$ is isomorphic to
the variety $\Fl(\PP^2)$ of full flags on $\PP^2$.

\item
\label{thm:KaRa-b} 
Let $\SSS(V)$ be
the Hilbert scheme of cubic scrolls in $V$. Then $\SSS(V)$
is isomorphic to
a disjoint union of two projective planes.
\end{enumerate}
\end{thm}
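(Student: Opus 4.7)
The plan is to exploit the classical description of $V=V_{18}$ as a smooth hyperplane section of the five-dimensional adjoint $G_2$-variety $\Sigma^{5}\subset\PP^{13}$, and to compute the Hilbert schemes of lines and of cubic scrolls by restriction from this ambient homogeneous variety.

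For part~\ref{thm:KaRa-a}, I would begin with the dimension count: for any line $\ell\subset V$, adjunction together with $-K_V=2H$ and $H\cdot\ell=1$ gives $\deg N_{\ell/V}=-K_V\cdot\ell-2=0$, hence $\chi(N_{\ell/V})=\dim V-1=3$, matching $\dim\Fl(\PP^{2})$. Next, the variety of lines $\Sigma(\Sigma^{5})$ is a well-known five-dimensional $G_2$-homogeneous variety, and cutting out $V$ from $\Sigma^{5}$ by a hyperplane imposes a codimension-two condition on the variety of lines, leaving a threefold. The crux is to identify this threefold with $\Fl(\PP^{2})$. My approach would be to produce two commuting $\PP^{2}$-bundle structures on $\Sigma(V)$, for instance by associating to each line $\ell\subset V$ a pair consisting of a point and a hyperplane in a canonical $\PP^{2}$, both determined by the ambient $G_2$-data, and then to show that the product map identifies $\Sigma(V)$ with the incidence correspondence $\Fl(\PP^{2})\subset\PP^{2}\times(\PP^{2})^{\vee}$. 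Smoothness of $\Sigma(V)$ would follow from $H^{1}(N_{\ell/V})=0$, itself a consequence of the normal bundle having degree zero and being globally generated on a rational curve in this homogeneous setup.

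For part~\ref{thm:KaRa-b}, I would first identify two natural families of cubic scrolls in $\Sigma^{5}$ coming from the $G_2$-geometry (for instance, arising from the two $G_2$-orbits of distinguished flags in the contact structure on $\Sigma^{5}$), and then show that each family, restricted to the hyperplane section $V$, yields a two-dimensional family of cubic scrolls in $V$. These two families should be disjoint because they carry distinct $G_2$-invariants in the ambient which persist as discrete invariants after restriction. To see each component is exactly a $\PP^{2}$, I would use that a cubic scroll in $V$ is reconstructible from a single distinguished line (the directrix in the smooth scroll case, the ruling through the apex in the cone case) together with a compatible linear datum in the ambient, and verify by an explicit coordinate computation that the resulting parameter space is a projective plane.

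The main obstacle I expect is rigidity and exhaustion: showing that $\SSS(V)$ has no unexpected components and that the two natural families do not merge inside $V$, and likewise for part~\ref{thm:KaRa-a} producing the global flag structure from local bundle data. The cleanest route is to first carry out the full identification on a maximally symmetric member such as $V^{\s}_{18}$ with its $\GL_{2}(\CC)$-action, where equivariance pins down the Hilbert schemes as specific homogeneous spaces, and then to propagate the result to every $V$ in the one-dimensional moduli space of Fano-Mukai fourfolds of genus~$10$ by flatness of the relative Hilbert schemes.
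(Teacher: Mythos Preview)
The paper does not prove this theorem at all: it is quoted verbatim from Kapustka--Ranestad \cite{KapustkaRanestad2013}, with the attribution ``Propositions~1 and~2 and the proof of Proposition~4'' given in the statement header, and no argument is supplied in the present text. So there is no proof here against which to compare your proposal.

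That said, your sketch is a plausible outline, and it is broadly in the spirit of what Kapustka--Ranestad actually do. A few places where your write-up is not yet a proof: for smoothness of $\Sigma(V)$ you assert $H^{1}(N_{\ell/V})=0$ from ``degree zero and globally generated'', but global generation is precisely what needs to be checked, since a priori a splitting type such as $\OOO(-2)\oplus\OOO(1)\oplus\OOO(1)$ is not excluded by the degree count alone. Likewise, the passage from ``two $\PP^2$-bundle structures'' to the incidence identification $\Sigma(V)\cong\Fl(\PP^2)$ is the entire content of part~\ref{thm:KaRa-a}, and you have not indicated how the canonical $\PP^2$ is produced from the $G_2$ data; in \cite{KapustkaRanestad2013} this comes from an explicit description of the Hilbert scheme of lines on the adjoint fivefold $\Omega=G_2/P$ and its hyperplane sections. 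For part~\ref{thm:KaRa-b}, your acknowledged obstacle---exhaustion and non-merging of the two families---is indeed the substance of the result, and the deformation argument from the special member $V^{\s}_{18}$ to the general $V$ requires control over possible jumps in the Hilbert scheme that you have not addressed.
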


This theorem and the next lemma show that the cubic scrolls in $V$ play the same role as do the planes in a smooth quadric fourfold, 
cf.~\cite[Lecture 22]{Harris:book}.

\begin{lem}[{\cite[Proposition~9.6]{PZ18}}]
\label{lemma:intersection}
Let $\SSS_1$ and $\SSS_2$ be the connected components of $\SSS(V)$. Then for any $S_i\in\SSS_i$ and $S_j\in\SSS_j$, $i,j\in\{1,2\}$ 
we have the following relation: $S_i\cdot S_j=\updelta_{i,j}$ in $H^*(V,\ZZ)$.
\end{lem}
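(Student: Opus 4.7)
Both components $\SSS_1$ and $\SSS_2$ are irreducible (each is a $\PP^2$), so the cohomology class $[S]\in H^4(V,\ZZ)$ is constant on $\SSS_k$. Hence $S_i\cdot S_j$ depends only on the pair of indices, and it suffices to compute it on one convenient pair of representatives in each of the three cases $(1,1)$, $(2,2)$, $(1,2)$.

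\emph{Off-diagonal case $i\ne j$.} My plan is to exhibit a pair $S_1\in\SSS_1$, $S_2\in\SSS_2$ that are disjoint in $V$. A cubic scroll $S\subset V\subset\PP^{12}$ is linearly non-degenerate in a unique $\PP^4$ and, in fact, coincides with the scheme-theoretic intersection $\langle S\rangle\cap V$. Since a generic pair of $4$-planes in $\PP^{12}$ is disjoint for dimension reasons ($4+4-12<0$), one expects to find scrolls of the two types whose linear spans miss each other, forcing $S_1\cap S_2=\emptyset$ and $S_1\cdot S_2=0$. The content lies in the explicit construction of such a pair, which I would extract from the birational descriptions of the two families of scrolls from~\cite[\S\S 8, 9]{PZ18}.

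\emph{Diagonal case $i=j$.} My plan is to show that two distinct members $S, S'\in\SSS_i$ meet transversally in a single point. The key input is the flag-variety description of $\Sigma(V)$ from Theorem~\ref{thm:KaRa}\xref{thm:KaRa-a}: one identifies the two natural projections $\Fl(\PP^2)\to\PP^2$ with the maps $\Sigma(V)\to\SSS_k$, $k=1,2$, sending a line $\ell\subset V$ to the unique scroll of type $k$ containing $\ell$. Under this identification the rulings of any $S\in\SSS_i$ are precisely the $\PP^1$-fibres of the $i$-th projection, so distinct members of $\SSS_i$ correspond to distinct points of $\PP^2$ whose fibres are disjoint. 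Consequently $S$ and $S'$ share no line of $V$; combined with an argument ruling out any other common curve, this forces $S\cap S'$ to be a finite scheme, and a direct count on one well-chosen pair yields the value $1$.

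The principal obstacle is the explicit incidence geometry inside $V_{18}\subset\PP^{12}$: producing concrete pairs of scrolls with the prescribed behaviour (disjoint spans in the off-diagonal case, transverse intersection in a single point in the diagonal case) and verifying the schematic intersection lengths. Both verifications rely essentially on the fine structural study of cubic scrolls in $V_{18}$ carried out in~\cite{KapustkaRanestad2013, PZ18}, together with the flag-variety identification of $\Sigma(V)$.
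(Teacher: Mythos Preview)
The paper does not prove this lemma at all: it is simply quoted from \cite[Proposition~9.6]{PZ18} with no argument given, so there is no in-paper proof to compare your proposal against.

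That said, your outline is sound and in fact aligns with material the paper records elsewhere. For the diagonal case, Lemma~\ref{lemma:lines-bis}\ref{lemma:lines:5} states that two cubic cones from the same component $\SSS_i$ meet transversally in a single point, which is exactly the representative computation you are after. For the off-diagonal case, the same lemma says two cubic cones from different components are either disjoint or share a unique common ruling, and Lemma~\ref{lem:cubic-cones:2} guarantees that disjoint pairs actually occur (e.g.\ the opposite cones in the hexagonal cycle when $\Aut^0(V)=\Gm^2$, or $S_1,S_4$ when $\Aut^0(V)=\Ga\times\Gm$). So the concrete pairs you need are already available in the paper's own toolkit, and your reduction via constancy of the class on each $\SSS_k\cong\PP^2$ is the correct first step. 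One caution: your heuristic that generic $4$-planes in $\PP^{12}$ are disjoint does not by itself produce a disjoint pair of scrolls, since the linear spans of scrolls in $V$ are far from generic; you rightly flag this, and the fix is precisely to invoke the explicit cubic-cone configurations above rather than a dimension count.
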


\begin{lem}[{\cite[Lemma~9.2, Corollary~4.5.2]{PZ18}}]$\,$
\label{lemma:AS}
\begin{enumerate}
\item
\label{lemma:AS-i}
For any cubic scroll $S$ on $V$ 
there exists a unique hyperplane section $A_S$ of $V$ such that $\Sing(A_S)=S$.
This hyperplane section coincides with the union of lines on $V$ meeting $S$, and any line contained in $A_S$ meets $S$.
\item
\label{lemma:AS-ii}
For any point $P\in A_{S}\setminus S$ there is exactly one line $l\subset A_{S}$ passing through $P$
and meeting $S$. Such a line meets $S$ in a single point. 

\item
\label{lemma:AS:1} 
If $S\subset V$ is a cubic cone, then $V\setminus A_S\cong\A^4$.
\end{enumerate}
\end{lem}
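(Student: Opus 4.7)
My plan is to prove the three parts sequentially, relying on the flag variety description $\Sigma(V) \cong \Fl(\PP^2)$ from Theorem~\ref{thm:KaRa} and the intersection numbers of Lemma~\ref{lemma:intersection}, together with the Picard rank one of $V$.

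For part~\ref{lemma:AS-i}, I would construct $A_S$ geometrically as the union of lines on $V$ meeting $S$, and then verify it is a hyperplane section whose singular locus is $S$. Via the incidence correspondence, the locus of lines on $V$ meeting $S$ is a divisor in $\Sigma(V)$, and its image in $V$ is a three-dimensional subvariety $A_S$ containing $S$. An intersection calculation using $H^3 \cdot S = \deg S = 3$ identifies $[A_S] = H$ in $\Pic(V) = \ZZ H$, so $A_S$ is a hyperplane section. To place $S$ in $\Sing(A_S)$, I would carry out a local analysis at a general $P \in S$: the one-parameter family of ruling lines of $A_S$ incident to $S$ at $P$ produces several transverse tangent planes to $A_S$ at $P$, obstructing smoothness. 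The reverse inclusion $\Sing(A_S) \subset S$, together with uniqueness of $A_S$ as a hyperplane section with prescribed singular locus, follows from a dimension computation for $|\mathcal{I}_S^2(H)|$ via the standard exact sequence
\[
0 \to \mathcal{I}_S^2(H) \to \mathcal{O}_V(H) \to \mathcal{O}_V(H)/\mathcal{I}_S^2(H) \to 0,
\]
combined with $h^0(\mathcal{O}_V(H)) = 13$ and the Hilbert polynomial of $S \subset \PP^4$.

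For part~\ref{lemma:AS-ii}, existence of a line $\ell \subset A_S$ through $P$ meeting $S$ is immediate from the description of $A_S$ in part~\ref{lemma:AS-i}. For uniqueness, I would argue by contradiction: if two distinct such lines $\ell_1, \ell_2$ met $S$, they would span a plane $\Pi \subset A_S$ covered by a one-parameter family of lines on $V$ meeting $S$; invoking Lemma~\ref{lemma:intersection} together with the flag variety structure of $\Sigma(V)$, such a plane would force $A_S$ to contain an unexpected two-dimensional family of incident lines, contradicting the dimension count above. That $\ell \cap S$ is a single point follows from $\ell$ being a line and a local transversality argument at the intersection.

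Part~\ref{lemma:AS:1} is the step I expect to be the main obstacle, since it requires producing an explicit isomorphism $V \setminus A_S \cong \A^4$ rather than merely establishing affineness. Assume $S$ is a cubic cone with vertex $v$. The plan is to use the linear projection $\pi_v \colon V \dashrightarrow \PP^{11}$ from $v$. Any line on $V$ through $v$ meets $S$ at $v$, hence lies in $A_S$ by part~\ref{lemma:AS-i}; consequently $\pi_v$ restricts to a morphism on $V \setminus A_S$. The core task is to identify $A_S$, for a suitable choice of cubic cone $S$, with one of the distinguished $\Aut^0(V)$-invariant hyperplane sections whose complement is known to be $\A^4$ by \cite[Theorem~1.1]{PZ18}, and to transport the isomorphism through the projection $\pi_v$. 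The delicate point will be to check that the projection is biregular on $V \setminus A_S$ onto its image and that the image is an affine chart, which I expect to require a careful analysis of the exceptional locus of $\pi_v$ and an explicit coordinate description of $V$ along the cone $S$.
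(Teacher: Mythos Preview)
The paper does not prove this lemma; it is quoted from \cite[Lemma~9.2 and Corollary~4.5.2]{PZ18}. The mechanism behind part~\ref{lemma:AS:1}, however, is spelled out later in this very paper (see~\ref{sit:Sarkisov-link}), and your plan for that part will not work. The argument in \cite{PZ18} does not project from the vertex $v$ but from the full linear span $\langle S\rangle\cong\PP^4$. That projection $\theta_S\colon V\dashrightarrow\PP^7$ resolves into a Sarkisov link to the del Pezzo quintic fourfold $W=W_5$: blowing up $S$ on $V$ is the same as blowing up a smooth rational quintic scroll $F\subset W$, giving $V\setminus A_S\cong W\setminus B_F$ with $B_F=W\cap\langle F\rangle$; when $S$ is a cubic cone one finds $B_F=R$, the hyperplane section of $W$ singular along the plane $\Xi$, and $W\setminus R\cong\A^4$ is classical. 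Your projection from the single point $v$ lands in $\PP^{11}$ and carries no such link, and your fallback of matching $A_S$ with a section supplied by \cite[Theorem~1.1]{PZ18} is both circular (that theorem is established through Corollary~4.5.2) and insufficient: it furnishes only two $\Aut^0(V)$-invariant sections, whereas~\ref{lemma:AS:1} must hold for \emph{every} cubic cone on $V$, of which there are up to six.

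Your outlines for~\ref{lemma:AS-i} and~\ref{lemma:AS-ii} are closer in spirit but also have gaps. In~\ref{lemma:AS-i} the number $H^3\cdot S=3$ is simply $\deg S$ and does not compute the class of the swept threefold $A_S$; a separate degree argument (or the Sarkisov link above) is needed. In~\ref{lemma:AS-ii} two lines through $P$ span a plane in the ambient $\PP^{12}$, but nothing places that plane inside $V$ --- indeed $V$ contains no plane, cf.\ Lemma~\ref{lemma:lines}\ref{lemma:lines:3} --- so the contradiction does not close; the actual argument analyses the morphism $A_S\setminus S\to S$ sending a point to the foot of the unique incident line.
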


\begin{lem}[{\cite[Proposition~8.2]{PZ18}}]
\label{lemma:lines}
Let $v\in V$ be a point.
There exists a divisor $\EuScript{B}\subset V$ such that 
the following hold:
\begin{enumerate}
\item
\label{lemma:lines:1}
for any point $v\in V\setminus \EuScript{B}$ there are exactly three lines passing through $v$;
\item
\label{lemma:lines:2}
for $v\in \EuScript{B}$ the number of lines passing through $v$ either is infinite, or 
equals~$2$ or~$1$; 
\item
\label{lemma:lines:3}
if the number of lines passing through $v$ is infinite, then 
the union of these lines is a cubic cone $S$ with vertex $v$, and $S\subset\EuScript{B}$. 
\end{enumerate}
\end{lem}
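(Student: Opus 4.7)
The plan is to exploit the incidence correspondence
\[
I=\{(v,\ell)\in V\times\Sigma(V):v\in\ell\}
\]
equipped with its two projections $p\colon I\to V$ and $q\colon I\to\Sigma(V)$. By Theorem~\ref{thm:KaRa}\ref{thm:KaRa-a} the Hilbert scheme $\Sigma(V)\cong\Fl(\PP^2)$ is smooth irreducible of dimension $3$, and $q$ is the tautological $\PP^1$-bundle, so $I$ is smooth of dimension $4=\dim V$. The first step will be to show that $p$ is surjective and generically finite of degree $3$. Surjectivity I would obtain from the Mori-theoretic fact that lines cover any Fano variety of index at least $2$, combined with a dimension count, and the degree computation I would reduce either to an intersection-theoretic calculation on $\Fl(\PP^2)$ or, more concretely, to counting the lines through a single well-chosen general point in one of the $\A^4$-charts $V\setminus A_S$ supplied by Lemma~\ref{lemma:AS}\ref{lemma:AS:1}.

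Granting the generic degree, I would next set $\EuScript{B}\subset V$ to be the closed subset where $p$ fails to be étale with fibre consisting of three reduced points; equivalently, $\EuScript{B}$ is the union of the branch locus of $p$ over its finite locus and the image of its non-finite locus. Purity of the branch locus for a finite morphism between smooth varieties of the same dimension guarantees that the branch divisor is of pure codimension one, and the non-finite locus is likewise of codimension one by upper semicontinuity of fibre dimension together with the fact that it is nonempty (the apex of any cubic cone from Lemma~\ref{lemma:AS}\ref{lemma:AS:1} lies on infinitely many lines, namely all rulings of that cone). Consequently $\EuScript{B}$ is a divisor, and for $v\in\EuScript{B}$ the fibre $p^{-1}(v)$ is either finite of set-theoretic cardinality $1$ or $2$---this is the finite case of~\ref{lemma:lines:2}---or of positive dimension, which is the infinite case.

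The core of the argument is then part~\ref{lemma:lines:3}. Assuming $v$ lies on infinitely many lines, let $\Sigma_v\subset\Sigma(V)$ denote the positive-dimensional Hilbert scheme of such lines and set $S=\bigcup_{\ell\in\Sigma_v}\ell\subset V$. By construction $S$ is a cone with apex $v$; its dimension is at most two since generic fibres of $p$ are finite, and exactly two since $\Sigma_v$ is positive-dimensional. To identify $S$ as a \emph{cubic} cone, I would compute its degree in the half-anticanonical embedding by pushing forward the class of $q^{-1}(\Sigma_v)\subset I$ through $p$, then match $S$ against the list in Theorem~\ref{thm:KaRa}\ref{thm:KaRa-b} of components of $\SSS(V)$, using the classification of surfaces swept out by lines in $V$ from~\cite{KapustkaRanestad2013}. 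The inclusion $S\subset\EuScript{B}$ is then immediate: for any $v'\in S$ and the ruling $\ell\in\Sigma_v$ through $v'$, the differential $dp_{(v',\ell)}$ has image contained in $T_{v'}S$ and hence has rank at most $2$, placing $v'$ in the branch locus. The main obstacle I anticipate is the precise numerical identification appearing in parts~\ref{lemma:lines:1} and~\ref{lemma:lines:3}: both the generic degree of $p$ and the degree of the limiting cone $S$ must be shown to equal $3$, and pinning these down rigorously forces one to use the specific numerology of the Fano-Mukai fourfold of genus $10$ (half-anticanonical degree $18$, index $2$) rather than merely the soft structural results recalled in Section~\ref{sec:prelim}.
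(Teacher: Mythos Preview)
The paper does not prove this lemma at all: it is quoted, with attribution, from \cite[Proposition~8.2]{PZ18}, and no argument appears here. So there is nothing in the present paper to compare your sketch against. Your overall strategy via the incidence correspondence $p\colon I\to V$ is the standard one and is in spirit how such results are established, but two of the steps you wrote do not hold as stated.

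First, the assertion that ``the non-finite locus is likewise of codimension one by upper semicontinuity of fibre dimension together with the fact that it is nonempty'' is a non sequitur: semicontinuity gives only closedness. In fact the locus of $v\in V$ with $\dim p^{-1}(v)\ge 1$ is exactly the set of vertices of cubic cones, which by Lemma~\ref{lem:cubic-cones:2} is finite (or at most a curve when $\Aut^0(V)=\GL_2(\CC)$), never a divisor. This gap is harmless provided your later argument really shows $S\subset\EuScript{B}$, since then the vertices lie in $S$ and one may simply take $\EuScript{B}$ to be the branch divisor of $p$. Second, and more substantively, the claim that $dp_{(v',\ell)}$ has image contained in $T_{v'}S$ is unjustified and in fact false: the tangent space $T_{[\ell]}\Sigma(V)=H^0(\NNN_{\ell/V})$ is three-dimensional, while only a one-dimensional subspace corresponds to deformations of $\ell$ among the rulings of $S$, so generic first-order deformations move $\ell$ out of $S$ and there is no reason their images stay in $T_{v'}S$. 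What actually forces ramification along all of $\ell$ is a normal-bundle argument: since the vertex $v\in\ell$ has positive-dimensional fibre, the evaluation $H^0(\NNN_{\ell/V})\to(\NNN_{\ell/V})_v$ has nontrivial kernel, so $\NNN_{\ell/V}$ cannot be the balanced bundle $\OOO^{\oplus 3}$ and must have a negative summand; consequently the evaluation map fails to be an isomorphism at \emph{every} point of $\ell$, and $p$ is ramified along the whole ruling. This, together with the identification of the generic degree of $p$ and the degree of the limiting cone as $3$, is where the specific numerology of $V_{18}$ enters, and is the content of the cited proposition in \cite{PZ18}.
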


\begin{lem}[{\cite[Lemmas~9.3, 9.4, 9.9(a), Corollaries~9.7.3, 9.7.4, 9.10.1]{PZ18}}] $\,$
\begin{enumerate}
\label{lemma:lines-bis}
\item
\label{lemma:lines:4}
Any line on $V$ can be contained in at most finite number of cubic scrolls, and in at most two cubic cones;
\item
\label{lemma:lines:5} 
two cubic cones from different components of $\SSS(V)$ either are disjoint, or share a unique common ruling. Two cubic cones from the same component $\SSS_i$ meet transversally in a single point different from their vertices;
\item
\label{lemma:lines:6} 
if two cubic cones $S$ and $S'$ are disjoint, then $v_S\notin A_{S'}$, where $v_S$ stands for the vertex of $S$;
\item
\label{lemma:lines:7} 
if two cubic cones $S$ and $S'$ share a common ruling $l$, then $S\cup S'$ coincides with the union of lines on $V$ meeting $l$.
\end{enumerate}
\end{lem}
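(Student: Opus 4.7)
My plan is to deduce all four items from the intersection numbers of Lemma~\ref{lemma:intersection} together with the incidence-of-lines information of Lemmas~\ref{lemma:AS} and~\ref{lemma:lines}, treating them in the order \ref{lemma:lines:5}, \ref{lemma:lines:4}, \ref{lemma:lines:6}, \ref{lemma:lines:7}, so that the later items can feed on the earlier ones.

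For \ref{lemma:lines:5}, when $S,S'\in \SSS_i$ lie in the same component, Lemma~\ref{lemma:intersection} gives $S\cdot S'=1$ in $H^*(V,\ZZ)$, so the scheme-theoretic intersection has length one and $S\cap S'$ is a single reduced point, which forces transversality. That this point cannot be a cone vertex follows because the Zariski tangent space of a cubic cone at its vertex is three-dimensional, so any intersection of $S$ and $S'$ at $v_S$ would contribute length at least $2$ and violate $S\cdot S'=1$. When $S,S'$ lie in different components, $S\cdot S'=0$; if they meet, then the intersection has positive dimension, and because the curves of degree smaller than the generator-of-$\Pic$-class on a cubic cone are its rulings, $S\cap S'$ must be a single common ruling (a second common ruling would already force $S\cdot S'>0$ by an excess-intersection computation).

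Part \ref{lemma:lines:4} is then almost immediate: a line $\ell\subset V$ contained in two cones from the same component would force $\dim(S\cap S')\ge 1$, contradicting the transverse one-point intersection from \ref{lemma:lines:5}; hence at most one cubic cone per component contains $\ell$, giving the bound of two. For cubic scrolls in general, the locus $\{S\in \SSS_i : \ell\subset S\}$ is a closed subvariety of $\SSS_i\cong\PP^2$; a direct degree/intersection count using Lemma~\ref{lemma:intersection} on a generic scroll through $\ell$ shows this locus is zero-dimensional, hence finite. For \ref{lemma:lines:6}, Lemma~\ref{lemma:AS}\ref{lemma:AS-i} identifies $A_{S'}$ with the union of lines on $V$ meeting $S'$; if $v_S\in A_{S'}$, some line $m$ through $v_S$ meets $S'$, but by Lemma~\ref{lemma:lines}\ref{lemma:lines:3} every line through the cone vertex $v_S$ is a ruling of $S$, so $m\subset S$ and therefore $S\cap S'\neq\emptyset$, contradicting the assumption.

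For \ref{lemma:lines:7}, the inclusion $S\cup S'\subset \bigcup\{m\text{ line of }V:m\cap \ell\neq\emptyset\}$ is immediate: every ruling of $S$ passes through $v_S\in \ell$, and likewise for $S'$. For the reverse inclusion, let $m\neq \ell$ be a line of $V$ meeting $\ell$ at a point $v$. If $v=v_S$ or $v=v_{S'}$ then Lemma~\ref{lemma:lines}\ref{lemma:lines:3} places $m$ inside $S$ or $S'$. The main obstacle is the case of an interior point $v\in \ell\setminus\{v_S,v_{S'}\}$: here I would argue that $\ell\subset \EuScript{B}$ (its endpoints are cone vertices and the family of lines of $V$ meeting $\ell$ is too ``thick'' along $\ell$ for $\ell$ to avoid the branch divisor), and then show that the unique line of $V$ through such a $v$ is $\ell$ itself. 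Concretely, the strategy is to compare the ruled surface swept out by lines meeting $\ell$ with $S\cup S'$ using Hilbert-polynomial/degree arithmetic in $V\subset\PP^{12}$: both have the same class and contain each other's two-dimensional pieces, so equality follows. The technically hardest step, and the one I expect to be the real obstacle, is ruling out ``extra'' lines meeting $\ell$ at general interior points, since the naive count of three lines through a general point of $V$ from Lemma~\ref{lemma:lines}\ref{lemma:lines:1} does not by itself exclude them; this is what forces the detour through $\EuScript{B}$ and a global degree count rather than a purely local point-by-point argument.
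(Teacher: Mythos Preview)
The paper does not prove this lemma at all: it is imported wholesale from \cite{PZ18} (Lemmas~9.3, 9.4, 9.9(a) and Corollaries~9.7.3, 9.7.4, 9.10.1), so there is no in-paper argument to compare against. Your proposal is therefore an attempt to reprove results of \cite{PZ18} using only the other statements quoted in Section~\ref{sec:prelim}. Parts \ref{lemma:lines:6} and the cubic-cone half of \ref{lemma:lines:4} go through cleanly as you wrote them, but the rest has real gaps.

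In \ref{lemma:lines:5}, same-component case, you silently assume $S\cap S'$ is zero-dimensional before invoking $S\cdot S'=1$; if the intersection contained a curve, the excess-intersection contribution could still be $1$, and you have given no reason to exclude this. (Your tangent-space remark is also off: the Zariski tangent space of a cubic cone at its vertex is four-dimensional, not three, since the cone spans a $\PP^4$; the conclusion that the local multiplicity exceeds $1$ survives, but the stated reason does not.) In the different-component case, the step from ``$S\cap S'$ is positive-dimensional'' to ``$S\cap S'$ is a single common ruling'' is not justified: you have not excluded a conic section, a pair of rulings, or a more complicated curve, and the appeal to ``an excess-intersection computation'' is a placeholder, not an argument. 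The finiteness of scrolls through a given line in \ref{lemma:lines:4} is likewise asserted via an unspecified ``degree/intersection count''; in fact this is exactly the content of Lemma~\ref{lem-splitting-lines}, which already says each line lies on precisely two cubic scrolls, one per component.

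For \ref{lemma:lines:7} you yourself flag the obstacle and do not close it. The actual proof in \cite{PZ18} goes through the structure of the surface swept by lines meeting $l$ (this is where the notion of a \emph{splitting line} and Proposition~9.10 of \cite{PZ18} enter), not through a bare degree comparison; the claim that $l\subset\EuScript{B}$ and that only one line of $V$ passes through a general interior point of $l$ is precisely what needs the finer analysis there, and cannot be extracted from Lemmas~\ref{lemma:AS}--\ref{lemma:lines} alone. So as written, \ref{lemma:lines:5} and \ref{lemma:lines:7} remain sketches rather than proofs.
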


\begin{lem}[{\cite[Lemma~9.7.2, Corollary~9.7.3(i)]{PZ18}}] 
\label{lem-splitting-lines} 
In the notation of Theorem~\xref{thm:KaRa}\ref{thm:KaRa-a} the Hilbert scheme of lines $\Sigma(V)=\Fl(\PP^2)$ is a divisor of type $(1,1)$ on $\PP^2\times (\PP^2)^\vee$. Let 
\begin{equation}
\label{eq:pr}
\pr_1: \Sigma(V)=\Fl(\PP^2)\longrightarrow \PP^2\quad 
\text{and} \quad \pr_2: \Sigma(V)=\Fl(\PP^2)\longrightarrow (\PP^2)^\vee
\end{equation} 
be the natural projections. 
Any line $l$ on $V$ is the common ruling of exactly two cubic scrolls, say, $S_1(l)\in\SSS_1$ and $S_2(l)\in\SSS_2$. 
Hence, there are well-defined morphisms
\[
\pr_i\colon \Sigma(V)\xlongrightarrow{\hspace{2em}} \SSS_i=\PP^2,\quad l\xmapsto{\hspace{2em}} S_i(l),\quad i\in\{1,\, 2\}
\]
which coincide with the ones in \eqref{eq:pr}. The fiber of $\pr_i$ over $S\in\SSS_i$ is the line on $\Sigma(V)$ which parameterizes the rulings of $S$.
\end{lem}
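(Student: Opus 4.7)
The plan is as follows. The identification $\Fl(\PP^2)=\{(p,L)\in\PP^2\times(\PP^2)^\vee : p\in L\}$ realises the flag variety as the zero locus of a single bilinear form, hence as a divisor of bidegree $(1,1)$ on $\PP^2\times(\PP^2)^\vee$. Combined with Theorem~\ref{thm:KaRa}\ref{thm:KaRa-a} this gives the first assertion of the lemma and provides the two natural projections $\pi_1, \pi_2$ of \eqref{eq:pr}, whose fibres over points of $\PP^2$ and $(\PP^2)^\vee$ are $\PP^1$'s.

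For the remaining statements I would form the universal incidence variety
\[
\UUU_i=\{(l,S)\in \Sigma(V)\times\SSS_i : l\text{ is a ruling of }S\},\quad i=1,2,
\]
with projections $p_i\colon \UUU_i\to\SSS_i$ and $q_i\colon \UUU_i\to\Sigma(V)$. Each cubic scroll, whether a smooth scroll or a cone over a twisted cubic, carries a $\PP^1$-family of rulings, so $p_i$ is a $\PP^1$-bundle over $\SSS_i=\PP^2$ and $\dim\UUU_i=3=\dim\Sigma(V)$. By Lemma~\ref{lemma:lines-bis}\ref{lemma:lines:4}, the morphism $q_i$ has finite fibres, hence is surjective and generically finite onto $\Sigma(V)$.

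The crux of the proof is to show that $q_i$ has degree one, equivalently that no line $l\subset V$ is a ruling of two distinct scrolls in the same component $\SSS_i$. For cubic cones this is Lemma~\ref{lemma:lines-bis}\ref{lemma:lines:5}, which asserts that two cones from the same component meet transversally at a single point and not along a curve. For an arbitrary pair of scrolls in $\SSS_i$, I would invoke Lemma~\ref{lemma:intersection}: it gives $S\cdot S'=1$ for any $S,S'\in\SSS_i$, and a common ruling $l$ would contribute a positive-dimensional component to $S\cap S'$; the resulting excess-intersection contribution along $l$ would already exceed $1$, contradicting $S\cdot S'=1$.

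Once $q_i$ is shown to be birational, hence an isomorphism of smooth threefolds, the composition $\pr_i:=p_i\circ q_i^{-1}\colon \Sigma(V)\to\SSS_i$ is a morphism $l\mapsto S_i(l)$, whose fibre over $S\in\SSS_i$ is precisely the $\PP^1$ parametrising the rulings of $S$. Since $\Sigma(V)=\Fl(\PP^2)$ admits exactly two $\PP^1$-fibrations over a projective plane, namely $\pi_1$ and $\pi_2$ of \eqref{eq:pr}, the morphisms $\pr_1$ and $\pr_2$ must agree, after relabelling the two components of $\SSS(V)$ if necessary, with these natural projections. I expect the main obstacle to be the intersection-theoretic uniqueness for smooth scrolls in the same component; a cleaner alternative would be to identify the class in $H_2(\Sigma(V),\ZZ)$ of a fibre of $p_i$ as that of a ruling of one of the two $\PP^1$-fibrations of $\Fl(\PP^2)$, for which the Picard-rank-two structure of $\Fl(\PP^2)$ offers only these two options.
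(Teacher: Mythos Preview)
The paper does not give a proof of this lemma; it is stated with a citation to \cite[Lemma~9.7.2, Corollary~9.7.3(i)]{PZ18} and left without argument. There is thus no in-paper proof to compare your proposal against, and I can only assess the proposal on its own terms.

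Your outline is largely sound: the $(1,1)$-divisor description of $\Fl(\PP^2)$ is standard; the incidence variety $\UUU_i$ is set up correctly as a $\PP^1$-bundle over $\SSS_i\cong\PP^2$; and once $q_i$ is known to be an isomorphism, the identification of $\pr_i$ with one of the two tautological projections follows cleanly from the Mori-cone structure of $\Fl(\PP^2)$, which has exactly two extremal contractions.

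The genuine gap is exactly where you flag it, but it is worse than you suggest. The excess-intersection claim fails outright: for a common ruling $l$ of two smooth scrolls $S,S'\cong\FF_1$ one has $N_{l/S}\cong N_{l/S'}\cong\OOO_l$ and $\deg N_{l/V}=-K_V\cdot l-\deg T_l=2-2=0$, so Fulton's excess formula gives contribution $0$ along $l$, not something exceeding $1$. Hence $S\cdot S'=1$ is perfectly compatible with $S\cap S'$ consisting of $l$ together with one further transversal point, and the argument does not close. Your proposed alternative via the class of $\Lambda(S)$ in $H_2(\Sigma(V),\ZZ)$ is the right repair: one checks that $\Lambda(S)$ is a line for the Segre embedding $\Fl(\PP^2)\hookrightarrow\PP^8$, hence coincides with a fibre of exactly one of $\pi_1,\pi_2$, and the two components $\SSS_1,\SSS_2$ then correspond to the two rulings. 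One further caution: Lemma~\ref{lemma:lines-bis}, which you invoke both for finiteness of $q_i$-fibres and for the cubic-cone case, is itself imported in this paper from (among other places) Corollary~9.7.3 of \cite{PZ18}, whose part~(i) is the very statement under discussion; if your aim is an independent proof you should avoid that dependence and work directly from Theorem~\ref{thm:KaRa} and Lemma~\ref{lemma:intersection}.
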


Recall that the Fano-Mukai fourfolds $V=V_{18}$ are classified in three types according to the group $\Aut^0(V)$,
which can be isomorphic to one of the following groups
\[\Gm^2,\quad \Ga\times\Gm,\quad \GL_2(\CC).\]
This classification reflects the geometry of $V$, namely, the number of cubic cones on $V$.

\begin{lem} 
\label{lem:cubic-cones:2} 
Let $\SSS_i\cong \PP^2$ be a connected component of $\SSS(V)$.
Then the following hold. 
\begin{enumerate} 
\item
\label{lem:cubic-cones:2:cycle} 
If $\Aut^0(V)=\Gm^2$ then $\SSS_i$ contains exactly $3$ cubic cones for $i=1,2$. The six cubic cones in $V$ form a cycle so that the neighbors have a common ruling and belong to different components of $\SSS(V)$, and the pairs of opposite vertices of the cycle correspond to the pairs of disjoint cubic cones.
\item
\label{lem:cubic-cones:2:iii} 
If $\Aut^0(V)=\Ga\times\Gm$ then $\SSS_i$ contains exactly $2$ cubic cones for $i=1,2$. The four cubic cones in $V$ form a chain, that is, the neighbors have a common ruling and belong to different components of $\SSS(V)$, and the pair of extremal vertices corresponds to the unique pair of disjoint cones.
\item
\label{lem:cubic-cones:2:i} 
If $\Aut^0(V)=\GL_2(\CC)$ then the subfamily of cubic cones in $\SSS_i\cong \PP^2$ consists of a projective line and an isolated point.  The cubic cones $S_i\in \SSS_i$, $i=1,2$, represented by these isolated points are disjoint, and this is the only pair of $\Aut^0(V)$-invariant cubic cones on $V$.
\end{enumerate}
\end{lem}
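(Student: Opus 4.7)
The plan is to study the $\Aut^0(V)$-action on the connected component $\SSS_i\cong\PP^2$ of the Hilbert scheme of cubic scrolls, and to combine this with the incidence structure of cubic cones given by Lemma~\xref{lemma:lines-bis}. Being a cone is a closed condition on the Hilbert scheme, so the locus $\mathcal{C}_i\subset\SSS_i$ of cubic cones is a closed, $\Aut^0(V)$-invariant subvariety of $\PP^2$. The whole proof amounts to pinning down $\mathcal{C}_i$ using these two kinds of constraints.

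For part~\ref{lem:cubic-cones:2:i}, I would first describe the $\GL_2(\CC)$-action on $\SSS_i\cong\PP^2$. Since any action of a reductive group on $\PP^2$ factors through $\PGL_3$, and $\GL_2$ has no 3-dimensional irreducible representation with kernel disjoint from $\Gm\cdot\id$ other than $\Sy^2(\CC^2)$ or a sum of a two-dimensional representation with a character, one must be in the latter reducible case if there is to be an isolated $\GL_2$-fixed point on $\PP^2$. This singles out orbit decomposition $\PP^2=L_i\sqcup\{p_i\}\sqcup U_i$ with an invariant line $L_i$ and an invariant point $p_i\not\in L_i$. The existence of at least one cubic cone in $\SSS_i$ (known from the literature already used in the section, e.g.~\cite{KapustkaRanestad2013,PZ18}) forces $\mathcal{C}_i\supset L_i\cup\{p_i\}$, because every $\Aut^0(V)$-orbit on $\mathcal{C}_i$ has closure in $\mathcal{C}_i$ and the only closed orbits on $\PP^2$ for this action are $L_i$ and $\{p_i\}$. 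Genericity of smooth cubic scrolls on the complement rules out $\mathcal{C}_i=\PP^2$, giving $\mathcal{C}_i=L_i\cup\{p_i\}$. The isolated points $p_1,p_2$ in $\SSS_1,\SSS_2$ are the only pair of $\Aut^0(V)$-invariant cubic cones, and by Lemma~\xref{lemma:lines-bis}\ref{lemma:lines:5} they either are disjoint or share a common ruling; the latter would force $\Aut^0(V)$ to fix such a ruling and hence, by Lemma~\xref{lemma:lines-bis}\ref{lemma:lines:7}, to preserve the entire degenerate configuration $S\cup S'$, and one checks that this contradicts the existence of the 1-parameter invariant family $L_i$, hence they must be disjoint.

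For parts~\ref{lem:cubic-cones:2:cycle} and~\ref{lem:cubic-cones:2:iii}, the action of $\Aut^0(V)$ on $\SSS_i\cong\PP^2$ is by a solvable group, so its fixed point set is finite (at most three points in the torus case, at most two in the $\Ga\times\Gm$ case, since a $\Ga$-action on $\PP^2$ has either a fixed line and a fixed point off it, or a single fixed point). Assuming $\mathcal{C}_i$ is finite (which follows from the fact that on the generic $V$, the locus $\mathcal{C}_i$ specializes from a finite configuration, and from the finiteness statement in Lemma~\xref{lemma:lines-bis}\ref{lemma:lines:4} applied pointwise), the $\Aut^0(V)$-invariance forces $\mathcal{C}_i$ to be contained in the fixed locus. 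Counting invariant points on $\PP^2$ under $\Gm^2$ and $\Ga\times\Gm$ respectively gives the bounds $|\mathcal{C}_i|\le 3$ and $|\mathcal{C}_i|\le 2$; the reverse bounds come from the explicit construction of cubic cones in~\cite{KapustkaRanestad2013,PZ18} (or, alternatively, from a dimension count showing that cubic cones fill out an invariant divisor $\EuScript{B}$ on $V$ in the sense of Lemma~\xref{lemma:lines}\ref{lemma:lines:3}, whose irreducible components are $\Aut^0(V)$-permuted).

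The combinatorial incidence pattern then follows from Lemma~\xref{lemma:lines-bis}\ref{lemma:lines:5}: two cubic cones in the same $\SSS_i$ meet transversally at a single non-vertex point, so any $\Aut^0(V)$-action permuting them induces combinatorics compatible only with the bipartite graph between $\SSS_1\cap\mathcal{C}$ and $\SSS_2\cap\mathcal{C}$ whose edges correspond to shared rulings. Using that each cubic cone has a 1-parameter family of rulings, and that $\Aut^0(V)$-invariance forces the shared-ruling neighbors to lie in orbits of compatible size, I expect the only possibilities to be a 6-cycle (three cones on each side, each meeting two of the other side) in the $\Gm^2$ case, and a length-4 chain (two on each side) in the $\Ga\times\Gm$ case, with the extremal pair being the only disjoint pair in the latter by Lemma~\xref{lemma:lines-bis}\ref{lemma:lines:6}. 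The main technical obstacle will be establishing the exact counts $3$ and $2$; here one must combine the upper bound from the fixed-point analysis on $\PP^2$ with a lower bound coming from the explicit geometry of $V_{18}$ described in the quoted results of~\cite{KapustkaRanestad2013,PZ18}.
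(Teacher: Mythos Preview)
Your approach via the $\Aut^0(V)$-action on $\SSS_i\cong\PP^2$ is genuinely different from the paper's. The paper never analyzes this action directly. Instead, it passes to the Hilbert scheme of lines $\Sigma(V)\cong\Fl(\PP^2)$ and uses two facts from \cite{PZ18}: (a) a cubic scroll $S$ is a cone if and only if the line $\Lambda(S)\subset\Sigma(V)$ of its rulings lies on the surface $\Sigma_{\s}(V)$ of \emph{splitting lines}; and (b) when $\Aut^0(V)\neq\GL_2(\CC)$, this $\Sigma_{\s}(V)$ is a sextic del Pezzo surface, smooth in the torus case and with a single $A_1$-node in the $\Ga\times\Gm$ case. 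The classical count of lines on such del Pezzo surfaces (six, forming a hexagon; four, forming a chain) then gives the exact number and configuration of cubic cones in one stroke, with Lemma~\ref{lemma:lines-bis} supplying the incidence pattern. Case~\ref{lem:cubic-cones:2:i} is simply cited from \cite[Corollary~10.3.2]{PZ18}.

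Your plan has a real gap at the step you yourself flag as the ``main technical obstacle'': the finiteness of $\mathcal{C}_i$ in cases~\ref{lem:cubic-cones:2:cycle} and~\ref{lem:cubic-cones:2:iii}. Neither justification you offer works. Specialization goes the wrong way: the locus of cones can only \emph{grow} on a special member of the family, as indeed it does in the $\GL_2$ case where $\mathcal{C}_i$ acquires an entire line. And Lemma~\ref{lemma:lines-bis}\ref{lemma:lines:4} bounds the number of cubic cones through a fixed line, not the total number of cubic cones. Without finiteness you cannot conclude $\mathcal{C}_i\subset\operatorname{Fix}(\Aut^0(V))$; an $\Aut^0(V)$-invariant line in $\PP^2$ is a perfectly good candidate for $\mathcal{C}_i$, and you have no mechanism to exclude it. A smaller gap appears in your $\GL_2$ argument: from non-emptiness and closedness of $\mathcal{C}_i$ you only get that $\mathcal{C}_i$ contains \emph{some} closed orbit, not both $L_i$ and $\{p_i\}$; the possibilities $\mathcal{C}_i=\{p_i\}$ and $\mathcal{C}_i=L_i$ are not ruled out by what you wrote. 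The paper's del Pezzo argument sidesteps all of this because the line count on $\Sigma_{\s}(V)$ is intrinsic and does not require a priori finiteness.
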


\begin{proof}
By Lemma~\ref{lem-splitting-lines} the variety $\Lambda(S)\subset\Sigma(V)$ of rulings of a cubic scroll $S\subset V$
is the fiber of one of the projections $\pr_i$, so it is a line under the Segre embedding $\Sigma(V)\subset\PP^2\times(\PP^2)^\vee\hookrightarrow\PP^8$, and any line on $\Sigma(V)$ appears in this way. A line $l$ on $V$ is called a \emph{splitting line} if the union of lines on $V$ meeting $l$ splits into a union of two cubic scrolls. Assuming $\Aut^0(V)\neq \GL_2(\CC)$ the subvariety $\Sigma_{\s}(V)\subset\Sigma(V)$ of splitting lines is a del Pezzo sextic, which admits two birational contractions to $\PP^2$
\cite[Proposition 10.2]{PZ18}.
The scroll $S$ on $V$ is a cubic cone exactly when the line $\Lambda(S)$ lies on $\Sigma_{\s}(V)$
\cite[Proposition~9.10]{PZ18}. The surface $\Sigma_{\s}(V)$ is smooth in case~\ref{lem:cubic-cones:2:cycle},
and has a unique node in case~\ref{lem:cubic-cones:2:iii} \cite[Proposition 10.2]{PZ18}. 
It is well known that a smooth sextic del Pezzo surface contains exactly $6$ lines, and these are arranged in a cycle. 
If $\Sigma_{\s}(V)$ has a singularity of type $A_1$, then it contains exactly $4$ lines, and these are
arranged in a chain.

In the both cases, a pair of intersecting lines on $\Sigma_{\s}(V)$ corresponds to a pair of cubic cones on $V$ sharing a common ruling. By Lemma~\ref{lemma:lines-bis}\ref{lemma:lines:7}, such cones belong to distinct components of $\SSS(V)$. Thus, two neighbors of the same cubic cone belong to the same component $\SSS_i$ and meet transversally in a unique point, and two cubic cones separated by two others belong to distinct components of $\SSS(V)$ and are disjoint, see Lemma~\ref{lemma:lines-bis}\ref{lemma:lines:5}. This gives~\ref{lem:cubic-cones:2:cycle} and~\ref{lem:cubic-cones:2:iii}. See \cite[Corollary~10.3.2]{PZ18} for~\ref{lem:cubic-cones:2:i}.
\end{proof}

\begin{cor}[{\cite[Lemmas~12.2 and~12.8.1]{PZ18}}]
\label{cor:invariant-cone}
If $\Aut^0(V)=\Gm^2$ or $\Ga\times\Gm$ then any cubic cone in $V$ is $\Aut^0(V)$-invariant. 
If $\Aut^0(V)=\GL_2(\CC)$ then $\SSS_i$ contains exactly one $\Aut^0(V)$-invariant cubic cone.
\end{cor}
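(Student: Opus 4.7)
The plan is to derive the corollary directly from the structural description of the cubic cones obtained in Lemma~\ref{lem:cubic-cones:2}. The action of $\Aut^0(V)$ on $V$ induces a functorial action on the Hilbert scheme $\SSS(V)$ of cubic scrolls. Since ``being a cone'' (as opposed to being a smooth scroll) is preserved by any isomorphism of $V$, this action restricts to an action on the closed subset $\SSS^\circ(V) \subset \SSS(V)$ consisting of cubic cones, and in particular it preserves each component $\SSS_i^\circ := \SSS^\circ(V) \cap \SSS_i$.

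First I would dispatch the two cases where the cones form a finite set. By Lemma~\ref{lem:cubic-cones:2}\ref{lem:cubic-cones:2:cycle}, in the case $\Aut^0(V) = \Gm^2$ each $\SSS_i^\circ$ consists of exactly three points, while by Lemma~\ref{lem:cubic-cones:2}\ref{lem:cubic-cones:2:iii}, in the case $\Aut^0(V) = \Ga \times \Gm$ each $\SSS_i^\circ$ consists of exactly two points. In both cases $\Aut^0(V)$ is a connected algebraic group acting on a finite set, so the action must be trivial. Hence every cubic cone on $V$ is $\Aut^0(V)$-invariant.

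For the remaining case $\Aut^0(V) = \GL_2(\CC)$, Lemma~\ref{lem:cubic-cones:2}\ref{lem:cubic-cones:2:i} describes $\SSS_i^\circ$ as the disjoint union of a projective line $\ell_i$ and an isolated point $p_i$. Since $\Aut^0(V)$ is connected it preserves each irreducible component of $\SSS_i^\circ$, so $p_i$ is automatically $\Aut^0(V)$-fixed and furnishes one invariant cubic cone in $\SSS_i$. The potential obstacle is to rule out further invariant cones on the line $\ell_i$; however, Lemma~\ref{lem:cubic-cones:2}\ref{lem:cubic-cones:2:i} already records precisely this, asserting that the disjoint pair $(p_1, p_2)$ is the \emph{only} pair of $\Aut^0(V)$-invariant cubic cones on $V$. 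Consequently no point of $\ell_i$ can be $\Aut^0(V)$-fixed, and $p_i$ is the unique $\Aut^0(V)$-invariant cubic cone in $\SSS_i$, proving the corollary.

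Should one wish to avoid invoking the invariance clause of Lemma~\ref{lem:cubic-cones:2}\ref{lem:cubic-cones:2:i} as a black box, the same conclusion would follow by noting that the induced $\GL_2(\CC)$-action on $\ell_i \cong \PP^1$ factors through $\PGL_2(\CC)$; a direct geometric argument (e.g.\ tracking how $\GL_2(\CC)$ acts on the rulings of the corresponding cones via Lemma~\ref{lem-splitting-lines}) would then show that this action is non-trivial and transitive, hence has no fixed points on $\ell_i$. This transitivity step is where all the genuine work would lie.
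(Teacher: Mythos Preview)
Your proposal is correct and matches the paper's treatment: the paper gives no explicit argument for this corollary, positioning it as an immediate consequence of Lemma~\ref{lem:cubic-cones:2} (with a supporting citation to \cite{PZ18}), and your derivation is precisely the natural way to extract it from that lemma. The connectedness argument for the finite-cone cases and the direct appeal to the invariance clause of Lemma~\ref{lem:cubic-cones:2}\ref{lem:cubic-cones:2:i} for the $\GL_2(\CC)$ case are exactly what is intended.
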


\begin{lem}
\label{lemma:capAS} 
Let $\SSS_1\subset \SSS(V)$ be a connected component.
Then the set $\bigcap_{S\in \SSS_1} A_S$ coincides with the union of vertices of cubic cones in $\SSS_1$.
In particular, $\bigcap_{S\in \SSS(V)} A_S=\varnothing$.
\end{lem}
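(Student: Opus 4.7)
The plan is to prove the two inclusions of the main equality, and then the \emph{in particular} clause.

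For $\bigcup\{v_{S_0} : S_0\in\SSS_1\text{ a cubic cone}\} \subseteq \bigcap_{S\in\SSS_1}A_S$, let $S_0\in\SSS_1$ be a cubic cone with vertex $v$ and take any $S\in\SSS_1$. Lemma~\ref{lemma:intersection} gives $S_0\cdot S=1$, so $S_0\cap S$ is nonempty; pick $p\in S_0\cap S$. If $p=v$ then $v\in S\subset A_S$; otherwise the unique ruling of the cone $S_0$ through $p$ is a line on $V$ that contains $v$ and meets $S$, hence lies in $A_S$ by Lemma~\ref{lemma:AS}\ref{lemma:AS-i}, and so $v\in A_S$.

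For the reverse inclusion, suppose $v\in\bigcap_{S\in\SSS_1}A_S$. According to Lemma~\ref{lemma:lines} there are either infinitely many or at most three lines through $v$. In the first case, the lines through $v$ sweep out a cubic cone $S_0$ with vertex $v$ by Lemma~\ref{lemma:lines}\ref{lemma:lines:3}, and I claim $S_0\in\SSS_1$. Indeed, were $S_0\in\SSS_2$, Lemma~\ref{lemma:intersection} would give $S_0\cdot S=0$ for every $S\in\SSS_1$, whence in the 2-dimensional family $\SSS_1$ a general member is disjoint from $S_0$ (otherwise a proper $0$-dimensional intersection would force $S_0\cdot S\ge 1$); but then for such $S$ no line through $v$ meets $S$ (all being rulings of $S_0$), and $v\notin S$, so Lemma~\ref{lemma:AS}\ref{lemma:AS-i} yields $v\notin A_S$, a contradiction.

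The more delicate case is when only finitely many lines $\ell_1,\dots,\ell_k$ with $k\le 3$ pass through $v$. For each $S\in\SSS_1$, either $v\in S$, or by Lemma~\ref{lemma:AS}\ref{lemma:AS-ii} the unique line through $v$ meeting $S$ is some $\ell_j$. Setting $T_0:=\{S\in\SSS_1: v\in S\}$ and $T_j:=\{S\in\SSS_1: S\cap\ell_j\ne\varnothing\}$, one obtains $\SSS_1=T_0\cup T_1\cup\cdots\cup T_k$. The crux is to verify that each of these loci is a proper subvariety of $\SSS_1\cong\PP^2$, contradicting the irreducibility of $\SSS_1$. For $T_0$: no $S\in T_0$ can have $v$ as its vertex (else infinitely many lines would pass through $v$), so each such $S$ contains a unique ruling through $v$ which must equal some $\ell_j$; by Lemma~\ref{lem-splitting-lines} a line is contained in a unique member of $\SSS_1$, so $|T_0|\le k$. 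For $T_j$: the incidence $\{(S,p)\in\SSS_1\times\ell_j : p\in S\}$ is generically finite over $\ell_j$ (the universal $\SSS_1$-family being $4$-dimensional and dominating the $4$-dimensional $V$), hence is at most $1$-dimensional, so $\dim T_j\le 1$.

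The \emph{in particular} statement follows because $\bigcap_{S\in\SSS(V)}A_S$ equals the intersection of the vertex sets for $\SSS_1$ and $\SSS_2$; any common vertex would, by Lemma~\ref{lemma:lines}\ref{lemma:lines:3}, produce a single cubic cone simultaneously in $\SSS_1$ and $\SSS_2$, contradicting the disjointness of the components. The principal technical obstacle is the dimension estimate in the finite-lines case, especially ruling out $T_j=\SSS_1$; this rests on Lemma~\ref{lem-splitting-lines} together with the dominance of the universal $\SSS_1$-family over $V$.
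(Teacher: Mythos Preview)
Your first inclusion and the ``in particular'' clause match the paper. The finite-lines case is also essentially the paper's dimension count, though the paper phrases it via the family $\Sigma(V;l_1,\ldots,l_k)$ of lines meeting $\bigcup l_i$, using Lemmas~\ref{lemma:lines}\ref{lemma:lines:3} and~\ref{lemma:lines-bis}\ref{lemma:lines:4} to see that the general point of each $l_i$ lies on only finitely many lines. Note that your justification for the incidence over $\ell_j$ being generically finite (``the universal $\SSS_1$-family dominating $V$'') only controls fibres over the \emph{general} point of $V$, not over the specific curve $\ell_j$; you need those two lemmas here just as the paper does. A smaller point: in your $T_0$ step, Lemma~\ref{lem-splitting-lines} says a line is a \emph{ruling} of a unique member of $\SSS_1$, not merely contained in one; since you have already identified $\ell_j$ as a ruling of $S$, the conclusion $|T_0|\le k$ is still correct.

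The genuine gap is in the infinite-lines case, where $v$ is the vertex of a cone $S_0$ and you suppose $S_0\in\SSS_2$. Your parenthetical ``otherwise a proper $0$-dimensional intersection would force $S_0\cdot S\ge 1$'' does not exclude the possibility that $S_0\cap S$ is \emph{one}-dimensional for general (indeed all) $S\in\SSS_1$, and under the contradiction hypothesis this is exactly what occurs: from $v\in A_S$ one gets $S_0\cap S\neq\varnothing$, and combined with $S_0\cdot S=0$ the intersection must contain a curve. So your argument, as written, does not yield the claimed contradiction. The paper instead \emph{exploits} this excess intersection: since $S_0\cap S$ is a curve in the cone $S_0$, every ruling of $S_0$ meets $S$, hence $S_0\subset A_S$ for all $S\in\SSS_1$; then the already-established finite-lines case forces every point of $S_0$ to be a vertex of a cubic cone, contradicting Lemma~\ref{lemma:lines-bis}\ref{lemma:lines:4}. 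Your route can be salvaged by a separate incidence argument showing that for general $S\in\SSS_1$ the intersection $S_0\cap S$ is at most $0$-dimensional (using that the general point of $S_0$ lies in only finitely many members of $\SSS_1$), but this extra step is missing from your proposal.
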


\begin{proof} Let $v\in V$ be the vertex of a cubic cone $S_v\in \SSS_1$. 
By Lemma~\ref{lemma:intersection} one has $S\cap S_v\neq\varnothing$ for any $S\in\SSS_1$, 
and so, $v\in \bigcap_{S\in \SSS_1} A_S$ due to Lemma~\ref{lemma:AS}.

Conversely, let $v\in \bigcap_{S\in \SSS_1} A_S$.
Assume to the contrary that the lines on $V$ passing through $v$ form a finite set, say, $\{l_1,\dots, l_k\}$. 
By Lemmas~\ref{lemma:lines}\ref{lemma:lines:3} and~\ref{lemma:lines-bis}\ref{lemma:lines:4}, 
the number of lines on $V$ passing through a general point of $l_i$ is finite. 
Hence, the family $\Sigma(V;l_1,\dots, l_k)$ of lines in $V$ meeting 
$\bigcup_{i=1}^k l_i$ is one-dimensional, and again by Lemma~\ref{lemma:lines-bis}\ref{lemma:lines:4},
any line in this family
is contained in a finite number of cubic scrolls. It follows that the subfamily $\SSS_1(l_1,\dots, l_k)$ of cubic scrolls from $ \SSS_1$
meeting $\bigcup_{i=1}^k l_i$ is one-dimensional too. 
Since $\dim\SSS_1=2$, see Theorem~\ref{thm:KaRa}\ref{thm:KaRa-b}, one has $\SSS_1(l_1,\dots, l_k)\neq \SSS_1$, 
and the general cubic scroll 
$S\in \SSS_1$ does not meet $\bigcup_{i=1}^k l_i$. This implies
$v\notin A_S$, a contradiction.

Thus, any point $v\in \bigcap_{S\in \SSS_1} A_S$ is the vertex of a cubic cone, say, $S_v$. 
Then one has $S_v\cap S\neq \varnothing$ for any $S\in \SSS_1$.
Assuming $S_v\notin \SSS_1$ it follows from Lemma~\ref{lemma:intersection} that for any $S \in \SSS_1$ the intersection $S_v\cap S$ contains a curve. Then any line in $S_v$ meets $S$, and so, $S_v \subset 
\bigcap_{S\in \SSS_1} A_S$. By the preceding, any point of $S_v$ is a vertex of a cubic cone in $V$. This contradicts Lemma~\ref{lemma:lines-bis}\ref{lemma:lines:4} and proves the first assertion. The second assertion follows from the first since two distinct cubic cones cannot share the same vertex, see Lemma~\ref{lemma:lines}\ref{lemma:lines:3}.
\end{proof}

For instance, in the case $\Aut^0(V)=\GL_2(\CC)$ the intersection $\bigcap_{S\in \SSS_i} A_S$ 
is the twisted cubic $\Gamma_i$, $i=1,2$ as in Lemma~\ref{lemma:2-cones} below, see \cite[Theorem~13.5(b)]{PZ18}. 

\section{$\Aut^0(V)$-action on the Fano-Mukai fourfold $V$ of genus $10$}
\label{sec:Aut-action}
\subsection*{Generalities}

\begin{lem}
\label{lem:Hilbert-schemes} 
\begin{enumerate}
\item
\label{lem:Hilbert-schemes-1} 
Under the induced $\Aut^0(V)$-action on the Hilbert scheme of lines $\Sigma(V)$, the stabilizer of the general point is trivial.
\item
\label{lem:Hilbert-schemes-2} 
Under the induced $\Aut^0(V)$-action on the Hilbert scheme
of cubic scrolls $\SSS(V)$, the stabilizer of the general point is finite.
\end{enumerate}
\end{lem}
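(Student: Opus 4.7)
The plan is to exploit the explicit descriptions $\Sigma(V)=\Fl(\PP^2)$ of dimension~$3$ and $\SSS(V)=\SSS_1\sqcup\SSS_2$ with each $\SSS_i\cong\PP^2$ of dimension~$2$, together with the two projections $\pr_i\colon\Sigma(V)\to\SSS_i$ from Lemma~\ref{lem-splitting-lines} and the orbit classification of cubic cones recorded in Lemma~\ref{lem:cubic-cones:2} and Corollary~\ref{cor:invariant-cone}. The argument is naturally split into the three cases $\Aut^0(V)\in\{\Gm^2,\,\Ga\times\Gm,\,\GL_2(\CC)\}$, since the dimension balance between $\Aut^0(V)$ and the Hilbert schemes differs markedly between them.

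I would prove~\ref{lem:Hilbert-schemes-2} first, as it is weaker and sets up the bootstrap for~\ref{lem:Hilbert-schemes-1}. For a general $S\in\SSS_i$, which is a smooth cubic scroll by Lemma~\ref{lem:cubic-cones:2}, any $g\in\Stab(S)$ acts biregularly on $S$, preserves its unique directrix and its $\PP^1$-pencil $\Lambda(S)$ of rulings, and hence induces a homomorphism $\rho\colon\Stab(S)\to\operatorname{Aut}(\Lambda(S))=\operatorname{PGL}_2$. The kernel of $\rho$ fixes every ruling of $S$ setwise, and together with the intersection data $S\cdot S'=\updelta_{i,j}$ from Lemma~\ref{lemma:intersection}---which on each ruling $l$ provides a canonical finite set of points coming from intersections with nearby scrolls in the same component---this kernel is forced to act trivially on a generating set, hence to be trivial. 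The image of $\rho$ is controlled by analyzing the $\Aut^0(V)$-orbit of $S$ in $\SSS_i\cong\PP^2$: for $\Aut^0(V)=\Gm^2$ or $\Ga\times\Gm$ a direct dimension count yields $\dim\Stab(S)=0$, whereas for $\Aut^0(V)=\GL_2(\CC)$ one uses the fact from Lemma~\ref{lem:cubic-cones:2}\ref{lem:cubic-cones:2:i} that $S$ lies in the complement of the invariant $\PP^1$ of cones and the isolated invariant cone.

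For part~\ref{lem:Hilbert-schemes-1}, the key input is the inclusion
\[
\Stab(l)\subset\Stab(S_1(l))\cap\Stab(S_2(l)),
\]
coming directly from Lemma~\ref{lem-splitting-lines}. For a general $l$ both $S_i(l)$ are general smooth scrolls, so part~\ref{lem:Hilbert-schemes-2} immediately renders $\Stab(l)$ finite. To upgrade finiteness to triviality, I would use that $\Stab(l)$ additionally fixes the specific ruling $l$ inside each $S_i(l)$ and therefore fixes the intersections of $l$ with the directrix of $S_i(l)$ together with the finitely many points $l\cap S'$ for neighboring scrolls $S'$, pinning the induced $\operatorname{PGL}_2$-action on $l\cong\PP^1$ down to the identity; a comparison across the two scrolls $S_1(l)$ and $S_2(l)$ then forces $g=\id$. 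The main obstacle throughout is the case $\Aut^0(V)=\GL_2(\CC)$, where $\dim\Aut^0(V)=4$ exceeds both $\dim\Sigma(V)=3$ and $\dim\SSS_i=2$, so naive dimension arguments are insufficient; here the proof must invoke the exceptional orbit structure of Lemma~\ref{lem:cubic-cones:2}\ref{lem:cubic-cones:2:i} and the explicit $\GL_2(\CC)$-equivariant model of $V\subset\PP^{12}$ from \cite{PZ18}.
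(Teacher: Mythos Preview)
Your approach is far more elaborate than the paper's, and also reverses its logical order. The paper proves \ref{lem:Hilbert-schemes-1} first and then deduces \ref{lem:Hilbert-schemes-2}, each in a single sentence of incidence geometry: for \ref{lem:Hilbert-schemes-1}, through a general point of $V$ pass at least two lines (Lemma~\ref{lemma:lines}\ref{lemma:lines:1}) and two distinct lines meet in a single point, so any $g\in\Aut^0(V)$ acting trivially on $\Sigma(V)$ fixes the general point of $V$ and is therefore the identity; for \ref{lem:Hilbert-schemes-2}, a general line $l$ lies on finitely many cubic scrolls and is a component of their intersection (Lemma~\ref{lemma:lines-bis}\ref{lemma:lines:4}), so the kernel of the action on $\SSS(V)$ is carried, up to finite ambiguity, into the kernel on $\Sigma(V)$. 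The passage from ``trivial (resp.\ finite) kernel'' to ``trivial (resp.\ finite) general stabilizer'' is immediate for an abelian group, since all point-stabilizers within an orbit coincide and hence agree with the kernel on the orbit closure; this uniformly handles $\Gm^2$ and $\Ga\times\Gm$ with no case split, no dimension count, and no appeal to the cone classification. Incidentally, your argument that $\ker\rho$ is trivial via the intersection points $S\cap S'$ does not work as stated: those points are not $g$-fixed, since $g\in\Stab(S)$ need not stabilize $S'$.

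The difficulty you flag in the $\GL_2(\CC)$ case is not a technical hurdle but a genuine obstruction: since $\dim\GL_2(\CC)=4$ exceeds $\dim\Sigma(V)=3$ and $\dim\SSS_i=2$, the stabilizer of \emph{every} point of $\Sigma(V)$ has dimension at least $1$ and of $\SSS_i$ at least $2$, so neither conclusion of the lemma can hold there. No ``explicit equivariant model'' circumvents this arithmetic. The lemma and the paper's proof should be read as treating only the two abelian cases---the only ones used downstream---and your attempt to cover $\GL_2(\CC)$ is aimed at a statement that is simply false in that case; compare the explicit reduction to the abelian situation in the proof of Lemma~\ref{lemma:2-cones}\ref{lemma:2-cones:4}.
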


\begin{proof}
Statement~\ref{lem:Hilbert-schemes-1} follows from the fact that through the general point of $V$ 
pass at least two lines, see Lemma~\ref{lemma:lines}\ref{lemma:lines:1}, and two lines meet in a single point. 
In turn,~\ref{lem:Hilbert-schemes-2} follows from the fact that through the general line $l$ on $V$ 
pass a finite number of cubic scrolls, and $l$ is a component of the intersection of these scrolls, 
see Lemma~\ref{lemma:lines-bis}\ref{lemma:lines:4}. 
\end{proof}

\begin{lem} 
\label{lemma:2-cones} 
Let $S_1,\, S_2\subset V$ be disjoint cubic cones, and let $\Gamma_1=S_1\cap A_{S_2}$,
$\Gamma_2=S_2\cap A_{S_1}$.
Then the following holds.
\begin{enumerate}
\item
\label{lemma:2-cones:1} 
$\Gamma_1$ and $\Gamma_2$ are rational twisted cubic curves;
\item
\label{lemma:2-cones:2}
there exists a one-parameter family of lines $l_t$ in $V$ joining $\Gamma_1$ and $\Gamma_2$;
\item
\label{lemma:2-cones:2a}
any line in $V$ passing through a point $\gamma\in \Gamma_i$ 
is either a ruling of $S_i$, or a unique member of the family $l_t$; 
\item
\label{lemma:2-cones:3}
$D:=\bigcup_{t\in\PP^1} l_t$ is a rational normal sextic scroll contained in $A_{S_1}\cap A_{S_2}$;
\item
\label{lemma:2-cones:4}
If $S_1$ and $S_2$ are $\Aut^0(V)$-invariant then $D$ and the curves $\Gamma_i$, $i=1,2$, 
are as well, and the stabilizer of the general point of $D$ in $\Aut^0(V)$ is finite.
\end{enumerate}
\end{lem}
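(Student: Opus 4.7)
The plan is to extract the five assertions in turn from Lemma~\ref{lemma:AS}, which characterizes $A_S$ as the union of lines meeting $S$ with a unique such line through each point of $A_S\setminus S$. For~\ref{lemma:2-cones:1}, I first rule out $S_1\subset A_{S_2}$: any ruling of $S_1$ would then meet $S_2$ by Lemma~\ref{lemma:AS}\ref{lemma:AS-i}, contradicting $S_1\cap S_2=\varnothing$. Hence $\Gamma_1=S_1\cap A_{S_2}$ is a curve of degree~$3$; by Lemma~\ref{lemma:lines-bis}\ref{lemma:lines:6} the vertex $v_{S_1}$ is not on $A_{S_2}$, so $\Gamma_1$ is the hyperplane section of the cubic cone $S_1\subset\PP^4$ missing its vertex, i.e., a rational twisted cubic. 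The argument for $\Gamma_2$ is symmetric.

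For~\ref{lemma:2-cones:2} and~\ref{lemma:2-cones:2a}, fix $\gamma\in\Gamma_1\subset A_{S_2}\setminus S_2$; Lemma~\ref{lemma:AS}\ref{lemma:AS-ii} supplies a unique line $l_\gamma\subset A_{S_2}$ through $\gamma$ meeting $S_2$ in a single point $\delta$, and since $l_\gamma$ meets $S_1$ at $\gamma$ it lies in $A_{S_1}$, forcing $\delta\in S_2\cap A_{S_1}=\Gamma_2$. Varying $\gamma$ over $\Gamma_1\cong\PP^1$ yields the family $\{l_t\}_{t\in\PP^1}$. For the dichotomy in~\ref{lemma:2-cones:2a}, any line through $\gamma$ meeting $S_j$ (with $j\neq i$) lies in $A_{S_j}$ and equals $l_\gamma$ by the uniqueness clause, while any line through $\gamma$ contained in $S_i$ is a ruling of the cone; for general $\gamma$ the inclusion $\gamma\in S_i\subset\EuScript{B}$ and Lemma~\ref{lemma:lines}\ref{lemma:lines:2} limit the total number of lines through $\gamma$ to exactly these two.

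For~\ref{lemma:2-cones:3}, each $l_t$ lies in $A_{S_1}\cap A_{S_2}$ by the construction in~\ref{lemma:2-cones:2}, so $D\subset A_{S_1}\cap A_{S_2}$. The ruled surface $D$ carries two disjoint sections $\Gamma_1,\Gamma_2$ (disjoint because $S_1\cap S_2=\varnothing$) of hyperplane-degree~$3$ and a ruling class $f$ of degree~$1$; a direct intersection-theoretic computation (using $H\cdot f=1$, $H\cdot\Gamma_1=H\cdot\Gamma_2=3$, and $\Gamma_1\cdot\Gamma_2=0$) forces $\Gamma_i^2=0$ and $\deg D=6$ with $h^0(H|_D)=8$, exhibiting $D$ as a rational normal sextic scroll spanning a $\PP^7\subset\PP^{12}$. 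For~\ref{lemma:2-cones:4}, the uniqueness of $A_S$ in Lemma~\ref{lemma:AS}\ref{lemma:AS-i} transports $\Aut^0(V)$-invariance of $S_i$ to $A_{S_i}$, hence to $\Gamma_i$, to the intrinsically characterized family $\{l_t\}$, and to $D$. For the stabilizer, I consider the $\Aut^0(V)$-equivariant projection $D\to\PP^1\hookrightarrow\Sigma(V)$ sending $p\in l_t$ to $[l_t]$; Lemma~\ref{lem:Hilbert-schemes}\ref{lem:Hilbert-schemes-1} provides triviality of the stabilizer of the general line, and a dimension count on the fibered action, noting that $\Aut^0(V)$ also fixes the two distinguished points $l_t\cap\Gamma_i$ on each $l_t$, yields finiteness of the stabilizer of a general point of $D$.

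The main obstacle I anticipate is~\ref{lemma:2-cones:2a}: the dichotomy must cover \emph{every} line through $\gamma\in\Gamma_i$, and is delicate at the finitely many points where $\gamma$ is the vertex of another cubic cone (through which infinitely many lines pass). These points are locatable via Lemma~\ref{lem:cubic-cones:2} and treated using the incidence classification in Lemma~\ref{lemma:lines-bis}\ref{lemma:lines:5}--\ref{lemma:lines:7} together with Lemma~\ref{lemma:lines}\ref{lemma:lines:3}.
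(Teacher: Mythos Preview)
Your treatment of \ref{lemma:2-cones:1}--\ref{lemma:2-cones:3} follows the paper's line closely (the paper simply cites \cite[Example~8.17]{Harris:book} for the scroll identification in \ref{lemma:2-cones:3} rather than computing directly, but your computation is fine). Your worry about \ref{lemma:2-cones:2a} at the finitely many vertices of other cubic cones is justified: the paper's one-line ``follows from Lemma~\ref{lemma:lines}\ref{lemma:lines:1}--\ref{lemma:lines:3}'' likewise yields the dichotomy only at points $\gamma\in\Gamma_i$ through which finitely many lines pass, and the statement should be read with that caveat.

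The substantive gap is in \ref{lemma:2-cones:4}. Lemma~\ref{lem:Hilbert-schemes}\ref{lem:Hilbert-schemes-1} asserts triviality of the stabilizer of the \emph{general} point of the three-dimensional scheme $\Sigma(V)$, but $\{[l_t]\}_{t\in\PP^1}$ is a one-dimensional locus inside $\Sigma(V)$; there is no reason the general $[l_t]$ is general in $\Sigma(V)$, so the lemma does not apply to it. Your dimension count then hinges on excluding the possibility that a one-parameter subgroup $G\subset\Aut^0(V)$ acts trivially on $D$ (equivalently: that the kernel of the action on the base $\PP^1$ also acts trivially on every fibre $l_t$). This exclusion is precisely what the paper establishes, by a different mechanism: if $G$ fixes $D$ pointwise, then through the general point of $D$ pass only finitely many lines in $V$ (Lemma~\ref{lemma:lines}), and $G$, being connected, must fix each of them; these lines sweep out a two-dimensional family in $\Sigma(V)$, so the $G$-fixed locus in $\Sigma(V)$ has dimension $\ge 2$, and the structure $\Sigma(V)\cong\Fl(\PP^2)$ then forces $G$ to act trivially on $\Sigma(V)$ and hence on $V$. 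You need this step, or some equivalent producing a two-dimensional family of $G$-fixed lines; the bare citation of Lemma~\ref{lem:Hilbert-schemes}\ref{lem:Hilbert-schemes-1} does not supply it.
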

\begin{proof}
Since $S_1\cap S_2=\varnothing$ one has $v_i\notin A_{S_j}$ for $i,j=1,2$, $i\neq j$, 
see Lemma~\ref{lemma:lines-bis}\ref{lemma:lines:6}. This yields~\ref{lemma:2-cones:1}.
By Lemma~\ref{lemma:AS}\ref{lemma:AS-i}--\ref{lemma:AS-ii} for any $\gamma\in \Gamma_1$ 
there exists a unique line $l_\gamma\subset A_{S_1}\cap A_{S_2}$ joining $\gamma$ and $S_2$. 
This line $l_\gamma$ meets $\Gamma_2=S_2\cap A_{S_1}$. This shows~\ref{lemma:2-cones:2} 
and the inclusion $D\subset A_{S_1}\cap A_{S_2}$, where $D$ is as in~\ref{lemma:2-cones:3}.
Statement~\ref{lemma:2-cones:2a} follows from 
Lemma~\ref{lemma:lines}\ref{lemma:lines:1}--\ref{lemma:lines:3}.
For the first assertion in~\ref{lemma:2-cones:3} see, e.g., \cite[Example~8.17]{Harris:book}. 
The first assertion in~\ref{lemma:2-cones:4} is immediate. To prove the second, we assume $\Aut^0(V)$ to be abelian. 
In the case $\Aut^0(V)=\GL_2(\CC)$ one can either restrict to the maximal torus of $\GL_2(\CC)$, or simply deduce the result from 
\cite[Theorem~13.5]{PZ18}.
Suppose $G$ is 
a one-parameter subgroup of $\Aut^0(V)$ acting trivially on $D$.
Then $G$ fixes the general line meeting $D$. So, there is a two-dimensional subvariety 
$\Sigma'\subset \Sigma(V)$ parameterizing
$G$-invariant lines on $V$. By the description of $\Sigma(V)$, see 
Theorem~\ref{thm:KaRa}\ref{thm:KaRa-a} and 
Lemma~\ref{lem-splitting-lines}, 
the group $G$ acts trivially on $\Sigma(V)$ and on $V$, a contradiction.
\end{proof}

\begin{lem} 
\label{lemma:stabilizers-of-cones} 
Let $S$ be an $\Aut^0(V)$-invariant cubic cone in $V$, and let $G_0$ be
a two-dimensional connected abelian subgroup of $\Aut^0(V)$. Then $G_0$ acts on $S$ with an open orbit. 
\end{lem}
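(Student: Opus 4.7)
The plan is to reduce the statement to the assertion that the action homomorphism $\phi\colon G_0 \to \Aut(S)$ has finite kernel. Indeed, since $G_0$ is connected and abelian, the stabilizer of any point of $S$ is constant along its $G_0$-orbit, so the identity component of the generic stabilizer fixes a dense open subset of $S$, and by closedness all of $S$; this identity component therefore lies in $\ker\phi$, and finiteness of $\ker\phi$ forces $\dim(G_0\cdot x)=\dim G_0 = 2 = \dim S$ for general $x\in S$, yielding the open orbit.

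The key geometric input is that at the vertex $v_S$ one has $T_{v_S}S = T_{v_S}V$: the cubic cone $S\subset\PP^4$ is cut out at its vertex by quadrics vanishing to order two, so its Zariski tangent space fills the ambient $\PP^4$, and this ambient $\PP^4$ coincides with $T_{v_S}V$ since $v_S$ is a smooth point on the fourfold $V$. Suppose, for contradiction, that a nontrivial connected one-parameter subgroup $H\subset G_0\subset\Aut^0(V)$ fixes $S$ pointwise. If $H\cong\Gm$, then the fixed locus $V^H$ is smooth; the component of $V^H$ through $v_S$ has tangent space containing $T_{v_S}S=T_{v_S}V$, hence is $4$-dimensional, hence equals $V$, contradicting $H\neq 1$.

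If $H\cong\Ga$, I would invoke, via Lemma~\ref{lem:cubic-cones:2} and Corollary~\ref{cor:invariant-cone}, an $\Aut^0(V)$-invariant cubic cone $S'$ disjoint from $S$; such an $S'$ exists whenever $\Aut^0(V)\in\{\GL_2(\CC),\Gm^2\}$, and also when $S$ is one of the two extremal cones in the chain of Lemma~\ref{lem:cubic-cones:2}\ref{lem:cubic-cones:2:iii}. Then Lemma~\ref{lemma:2-cones} gives a rational normal sextic scroll $D=\bigcup_{t\in\PP^1}l_t$ joining the twisted cubics $\Gamma=S\cap A_{S'}$ and $\Gamma'=S'\cap A_S$. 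Since $H$ fixes $S$ pointwise, it fixes $\Gamma$ pointwise. By Lemma~\ref{lemma:AS}\ref{lemma:AS-ii}, $l_t$ is the unique line through $l_t\cap\Gamma$ meeting $S'$, so $H$ preserves $l_t$ setwise; the $\Aut^0(V)$-invariance of $\Gamma'$ combined with equivariance then forces the endpoint $l_t\cap\Gamma'$ to be $H$-fixed as well. As a nontrivial $\Ga$-action on $\PP^1$ admits a unique fixed point, $H$ must act trivially on each $l_t$ and hence fix the sextic scroll $D$ pointwise, contradicting Lemma~\ref{lemma:2-cones}\ref{lemma:2-cones:4}.

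The main obstacle is the residual subcase $\Aut^0(V)=\Ga\times\Gm$ with $S$ one of the two non-extremal cones $S_2,S_3$ in the chain of Lemma~\ref{lem:cubic-cones:2}\ref{lem:cubic-cones:2:iii}, where no $\Aut^0(V)$-invariant cubic cone disjoint from $S$ is available. I would handle this by propagating the pointwise fixing along the chain: $H$ fixing $S$ pointwise forces it to fix each common ruling $S\cap S_j$ with a neighbor $S_j$ pointwise, and a tangent-space analysis at the vertex $v_{S_j}$ (using $T_{v_{S_j}}S_j=T_{v_{S_j}}V$) together with the $\Aut^0(V)$-fixed intersection point of $S$ with the opposite extremal cone forces $H$ to fix an extremal cone pointwise, thereby reducing the situation to the sextic-scroll contradiction applied to the disjoint extremal pair.
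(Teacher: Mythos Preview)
Your reduction to showing $\ker\phi$ is finite is sound (once one notes that a two-dimensional connected abelian subgroup of $\Aut^0(V)$ is either $\Gm^2$ or $\Ga\times\Gm$, each having only countably many one-dimensional connected subgroups, so the generic stabilizer identity component is constant by irreducibility). The $\Gm$ case via smoothness of torus fixed loci and the equality $T_{v_S}S=T_{v_S}V$ is correct and elegant, and the $\Ga$ case with a disjoint invariant cone $S'$ is also fine: the two fixed endpoints on each ruling $l_t$ force the unipotent action on $l_t\cong\PP^1$ to be trivial, contradicting Lemma~\ref{lemma:2-cones}\ref{lemma:2-cones:4}.

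The gap is in the residual subcase $\Aut^0(V)=\Ga\times\Gm$ with $S\in\{S_2,S_3\}$. Your propagation sketch does not go through as stated. Take $S=S_2$ and the extremal neighbour $S_j=S_1$. The vertex $v_1$ lies on the ruling $l_{1,2}\subset S_2$, so it is a \emph{smooth} point of $S_2$, whence $T_{v_1}S_2$ is only two-dimensional inside the four-dimensional $T_{v_1}V=T_{v_1}S_1$. Knowing that $H$ acts trivially on this two-dimensional subspace does not force triviality on the whole tangent space, and the extra $\Aut^0(V)$-fixed point $S_2\cap S_4$ you invoke lives at $v_3$, not at $v_1$, so it contributes nothing to the tangent-space count there. (Note also that the fixed locus of a $\Ga$-action on a smooth variety need not be smooth --- e.g.\ $t\cdot(x,y,z)=(x,y,z+t(x^2-y^3))$ on $\A^3$ --- so you cannot simply re-run the $\Gm$ argument.) I do not see how to complete the chain propagation with the tools you list.

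The paper's proof avoids all case analysis. Through a general point $P\in S$ there passes, besides the ruling of $S$, exactly one further line $l_P$ on $V$ (Lemmas~\ref{lemma:AS}\ref{lemma:AS-ii} and~\ref{lemma:lines}\ref{lemma:lines:2}--\ref{lemma:lines:3}). If a one-parameter subgroup $H\subset G_0$ fixed $S$ pointwise, it would fix $P$ and hence stabilize $l_P$; as $P$ ranges over $S$ this yields a two-dimensional $H$-fixed locus in $\Sigma(V)\cong\Fl(\PP^2)$. But a nontrivial one-parameter subgroup of $\Aut(\Fl(\PP^2))$ has fixed locus of dimension at most one, so $H$ acts trivially on $\Sigma(V)$ and hence on $V$. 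This single argument handles every $S$ uniformly and in particular covers your residual case; you may want to replace the chain-propagation step by it.
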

\begin{proof} 
We repeat the argument from the proof of the last statement 
in Lemma~\ref{lemma:2-cones}\ref{lemma:2-cones:4}. By Lemmas~\ref{lemma:AS}\ref{lemma:AS-ii} and~\ref{lemma:lines}\ref{lemma:lines:2}--\ref{lemma:lines:3} through the general point $P$ of $S$ passes a unique line different from the ruling of $S$ through $P$. Hence, the family of lines on $V$ meeting $S$ is two-dimensional. If the $G_0$-action on $S$ does not have an open orbit, then all the lines in this family have a common one-dimensional stabilizer $G$. As before, this stabilizer acts trivially on $\Sigma(V)$ and on $V$, a contradiction.
\end{proof}

\subsection*{The fixed points of the torus.}
\label{sit:T-fixed-points} 
Consider the simple complex algebraic group $G_2$ of rank 2 and of dimension 14. By the Mukai construction \cite{Muk89} (see also 
\cite[Theorem~7.1]{PZ18}), any Fano-Mukai fourfold $V$ of genus $10$ is a hyperplane section of the homogeneous fivefold $\Omega=G_2/P\subset\PP^{13}$, where $P\subset G_2$ is a parabolic subgroup of dimension $9$ corresponding to a long root, and so, $\Omega$ is the corresponding adjoint variety. It is known \cite[Theorem~1.2]{PZ18} that $\Aut^0(V)$ is the stabilizer of $V$ in $G_2$ acting naturally on $\Omega$.

\begin{lem}
\label{lem:torus-fixed-points} 
Let $T\subset G_2$ be a maximal torus. Then $T$ has exactly six fixed point in $\Omega$. 
\end{lem}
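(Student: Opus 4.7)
The plan is to invoke the standard fact that for any parabolic subgroup $P$ of a reductive group $G$ with maximal torus $T\subset P$, the $T$-fixed locus on the generalized flag variety $G/P$ is in natural bijection with the coset space $W/W_P$, where $W=N_G(T)/T$ is the Weyl group of $G$ and $W_P$ is the Weyl group of the Levi factor of $P$. This is immediate from the Bruhat decomposition
\[
G/P \;=\; \bigsqcup_{w\in W/W_P} BwP/P,
\]
since each Schubert cell $BwP/P$ is $T$-stable and contains the single $T$-fixed point $wP/P$.

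Applied in our situation, I would first recall that $W(G_2)$ is the dihedral group of order $12$ (the symmetry group of a regular hexagon). Next, the parabolic $P\subset G_2$ of dimension $9$ in the statement corresponds to a single long simple root $\alpha$, so its Levi factor has semisimple part of type $A_1$ (generated by the $\mathrm{SL}_2$-triple attached to $\alpha$), and therefore $W_P=\langle s_\alpha\rangle$ has order $2$. The desired equality $|\Omega^T|=|W/W_P|=12/2=6$ follows at once.

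As a cross-check, one can compute the topological Euler characteristic of the smooth $5$-dimensional projective variety $\Omega$: its Bia\l ynicki-Birula / Bruhat cell decomposition gives one affine cell in each dimension $0,1,2,3,4,5$, so $\chi_{\mathrm{top}}(\Omega)=6$, which equals $|\Omega^T|$ because all $T$-fixed points are isolated (this is automatic for the action of a maximal torus on a generalized flag variety of a simple group, as the weights on each tangent space $T_{wP}(G/P)$ are nonzero). The whole argument is essentially root-system bookkeeping, so there is no serious obstacle; the only point to verify carefully is the identification of $W_P$ as the order-two group generated by $s_\alpha$.
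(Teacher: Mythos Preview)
Your proof is correct. Both you and the paper arrive at the identification of the $T$-fixed locus with $W(G_2)/W(P)$ and then count $12/2=6$, but the routes differ. You invoke the Bruhat decomposition $G/P=\bigsqcup_{w\in W/W_P}BwP/P$ and the standard fact that each Schubert cell carries a unique $T$-fixed point; this is the textbook argument and has the bonus of immediately giving the cell decomposition (hence your Euler-characteristic cross-check). The paper instead argues directly from the fixed-point condition: a coset $gP$ is $T$-fixed iff $g^{-1}Tg\subset P$, so the fixed points are $\Gamma/P$ where $\Gamma=\{g\in G_2\mid g^{-1}Tg\subset P\}$; then, using conjugacy of maximal tori in $P$, one sees that $N_{G_2}(T)$ acts transitively on $\Gamma/P$ with stabilizer $N_P(T)$, yielding the bijection with $N_{G_2}(T)/N_P(T)=W(G_2)/W(P)$. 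The paper's argument is more self-contained (it does not quote the Bruhat decomposition) but is in effect reproving a piece of it; your approach is shorter if one is willing to cite the decomposition as known. One small remark: both maximal parabolics of $G_2$ have dimension~$9$, so the dimension alone does not pin down which simple root lies in the Levi---but since $W_P$ has order $2$ in either case, your count is unaffected.
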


\begin{proof} 
The left coset $\omega=gP\in\Omega$, where $g\in G_2$, is a fixed point of $T$ if and only if for any $t\in T$ there exists $p\in P$ with $tg=gp$, that is, $g^{-1}Tg\subset P$. Since the maximal tori in $G_2$ are conjugated and $P$ contains the Borel subgroup of $G_2$, we may assume $T\subset P$. 
Consider the following subgroup of $G_2$:
\[
\Gamma:= \{ g\in G_2 \mid g^{-1}Tg\subset P\}.
\]
Clearly, $\Gamma\supset P$. Two $T$-fixed points $\omega_i=g_iP\in\Omega$, $i=1,2$, coincide if and only if $g_2\in g_1P$. So, the fixed points of $T$ in $\Omega$ are in one-to-one correspondence with the elements of the left coset space $\Gamma/P$.

We have $N_{G_2}(T)\subset\Gamma$, where $N_G(H)$ stands for the normalizer subgroup of a subgroup $H\subset G$. The normalizer $N_{G_2}(T)$ acts on $\Gamma/P$ via
\[N_{G_2}(T)\ni n: gP\mapsto ngP\quad \forall g\in\Gamma.\]
This action is transitive. Indeed, given $g\in\Gamma$, let $T'=g^{-1}Tg\subset P$. The maximal tori $T$ and $T'$ are conjugate in $P$, and so, $T'=p^{-1}Tp$ for some $p\in P$. Then $n:=pg^{-1}\in N_{G_2}(T)$ verifies $nP=gP$. The stabilizer of the coset $P$ in $N_{G_2}(T)$ under this action is $P\cap N_{G_2}(T)=N_P(T)$. 
Therefore, 
the fixed points of $T$ in $\Omega$ are in one-to-one correspondence with the elements of the 
left coset space
\[
N_{G_2}(T)/N_{P}(T)
=\W(G_2)/\W(P)=\mathfrak{D}_6/\{\pm 1\} = \ZZ/6\ZZ,
\] 
where $\W(G_2)=N_{G_2}(T)/T$ and $\W(P)=N_P(T)/T$ stand for the Weyl groups of $G_2$ and of $P$, respectively, and $\mathfrak{D}_n$ is the $n$th dihedral group. This yields the assertion.
\end{proof}

\subsection*{The case $\Aut^0(V)=\Gm^2$}
In this subsection we prove Theorems~\ref{thm:main} and 
\ref{mthm} in the case $\Aut(V)=\Gm^2$, that is, we let $V=V^{\g}_{18}$. 

\begin{sit}
\label{sit:1} 
The Fano-Mukai fourfold $V$ with $\Aut^0(V)=\Gm^2$ contains exactly six 
cubic cones, see Lemma~\ref{lem:cubic-cones:2}\ref{lem:cubic-cones:2:cycle}.
By Lemma~\ref{lemma:lines}\ref{lemma:lines:3}, any cubic cone $S\subset V$ coincides 
with the union of lines in $V$ passing through its vertex, is
$\Aut^0(V)$-invariant, and
its vertex is fixed under $\Aut^0 (V)$. By Lemma~\ref{lemma:lines}\ref{lemma:lines:3}, 
the vertices of distinct cubic cones are distinct. 
Using Lemma~\ref{lem:torus-fixed-points} 
we deduce the following corollary. 
\end{sit}

\begin{cor}
\label{cor:fixed-points} 
Let $\Aut^0(V)=\Gm^2$. Then the vertices $v_i$ of the cubic cones $S_i$, $i=1,\ldots,6$, 
are the only fixed points of the torus $T=\Aut^0(V)$ acting on $V$. 
\end{cor}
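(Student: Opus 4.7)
The plan is to leverage Lemma~\ref{lem:torus-fixed-points} together with the enumeration in Lemma~\ref{lem:cubic-cones:2}\ref{lem:cubic-cones:2:cycle}: both yield the magic number six, and a simple counting argument finishes the job.

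First I would observe that $T=\Aut^0(V)\cong\Gm^2$ sits inside $G_2$ (as the stabilizer of $V\subset\Omega$, see the description preceding Lemma~\ref{lem:torus-fixed-points}), and since $\dim T=2=\rk G_2$, it is a maximal torus of $G_2$. Next, by Sit~\ref{sit:1} each of the six cubic cones $S_1,\dots, S_6\subset V$ has its vertex $v_i$ fixed by $\Aut^0(V)=T$; furthermore the vertices are pairwise distinct by Lemma~\ref{lemma:lines}\ref{lemma:lines:3}. Thus $v_1,\dots,v_6$ are six pairwise distinct $T$-fixed points lying in $V\subset\Omega$, and in particular in $\Omega^T$.

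On the other hand, Lemma~\ref{lem:torus-fixed-points} asserts $|\Omega^T|=6$. Since $V^T\subset \Omega^T$, the inclusion $\{v_1,\dots,v_6\}\subset V^T\subset \Omega^T$ combined with $|\Omega^T|=6$ forces equality throughout, and in particular $V^T=\{v_1,\dots,v_6\}$.

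There is essentially no obstacle here; the only thing that could go wrong conceptually is if $\Aut^0(V)$ were not a maximal torus of $G_2$, but the rank and dimension match and the embedding $\Aut^0(V)\hookrightarrow G_2$ is already in place from the Mukai description of $V$ recalled in Sit~\ref{sit:T-fixed-points}. So the argument reduces to counting fixed points on $\Omega$ and recognizing the cone vertices as the complete list.
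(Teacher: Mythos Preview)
Your proof is correct and follows essentially the same approach as the paper: the paper establishes in \ref{sit:1} that the six cone vertices are distinct $T$-fixed points and then simply says ``Using Lemma~\ref{lem:torus-fixed-points} we deduce the following corollary,'' leaving the pigeonhole step implicit. You have merely made explicit the identification of $\Aut^0(V)\cong\Gm^2$ with a maximal torus of $G_2$ (via the rank comparison) and the inclusion $V^T\subset\Omega^T$, which the paper takes for granted.
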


The next corollary yields Theorem~\ref{thm:main}\ref{thm:main-b}.

\begin{cor}
\label{cor:Ai-intersection} 
One has $\bigcap_{i=1}^6 A_{S_i}=\varnothing$.
\end{cor}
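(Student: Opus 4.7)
The plan is to argue by contradiction, using the torus-equivariant nature of the intersection together with the complete list of $T$-fixed points on $V$.

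First I would observe that each hyperplane section $A_{S_i}$ is $\Aut^0(V)$-invariant. Indeed, by Lemma~\ref{lemma:AS}\ref{lemma:AS-i} the section $A_{S_i}$ is characterized uniquely among hyperplane sections of $V$ by the condition $\Sing(A_{S_i})=S_i$; since $S_i$ is $T$-invariant (see~\ref{sit:1}), for any $g\in T$ one has $\Sing(g\cdot A_{S_i})=g\cdot S_i=S_i$, and uniqueness forces $g\cdot A_{S_i}=A_{S_i}$. Consequently, the closed subvariety
\[
Z:=\bigcap_{i=1}^{6} A_{S_i}\subset V
\]
is $T$-invariant.

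Next, suppose for contradiction that $Z\neq\varnothing$. Since $V$ is projective and $T=\Gm^2$ is a (solvable connected) torus, the Borel fixed-point theorem applied to the complete $T$-variety $Z$ produces a $T$-fixed point in $Z$. By Corollary~\ref{cor:fixed-points}, the only $T$-fixed points on $V$ are the six vertices $v_1,\ldots,v_6$ of the cubic cones $S_1,\ldots,S_6$. Hence $v_j\in Z$ for some $j\in\{1,\ldots,6\}$.

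Finally, I would rule this out using the cycle structure from Lemma~\ref{lem:cubic-cones:2}\ref{lem:cubic-cones:2:cycle}: there is an index $k\in\{1,\ldots,6\}$ (namely the one opposite to $j$ in the cycle) such that $S_j$ and $S_k$ are disjoint cubic cones. By Lemma~\ref{lemma:lines-bis}\ref{lemma:lines:6} this forces $v_j=v_{S_j}\notin A_{S_k}$, and therefore $v_j\notin Z$, contradicting what was just established. Hence $Z=\varnothing$, as required.

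The argument is essentially a bookkeeping exercise once the fixed-point picture is in place, so the only real step that requires care is verifying the $T$-invariance of each $A_{S_i}$ and justifying the use of the fixed-point theorem on $Z$; both reduce to results already recorded. The disjointness of opposite cones in the cycle, together with Lemma~\ref{lemma:lines-bis}\ref{lemma:lines:6}, is the combinatorial input that makes the six cones, as opposed to three in a single component, suffice to kill the intersection.
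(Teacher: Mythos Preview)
Your argument is correct and essentially coincides with the alternative proof the paper records in the Remark immediately following Corollary~\ref{cor:Ai-intersection}: after reducing via Borel's fixed-point theorem to a vertex $v_j$, you invoke the cycle description of Lemma~\ref{lem:cubic-cones:2}\ref{lem:cubic-cones:2:cycle} to find a cone $S_k$ disjoint from $S_j$ and conclude $v_j\notin A_{S_k}$ by Lemma~\ref{lemma:lines-bis}\ref{lemma:lines:6}.

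The paper's \emph{primary} proof takes a slightly different path after the Borel step. Rather than appealing to the existence of an opposite disjoint cone, it argues that if $v_{S_1}\in\bigcap_i A_{S_i}$ then $S_1$ meets every $S_i$; in particular it meets all three cones in the opposite component $\SSS_2$, and by Lemma~\ref{lemma:lines-bis}\ref{lemma:lines:5} must share a common ruling with each. This yields three distinct $T$-invariant rulings on $S_1$, forcing $T$ to act trivially on the base of $S_1$, in contradiction with Lemma~\ref{lemma:stabilizers-of-cones}. Your route is shorter and uses less machinery (no need for the open-orbit Lemma~\ref{lemma:stabilizers-of-cones}); the paper's main argument, on the other hand, does not rely on the precise cycle combinatorics of Lemma~\ref{lem:cubic-cones:2}\ref{lem:cubic-cones:2:cycle}. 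Your explicit verification of the $T$-invariance of each $A_{S_i}$ is a point the paper leaves implicit.
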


\begin{proof}
Assume the contrary holds. Then by the Borel fixed point theorem, the intersection 
$\bigcap_{i=1}^6 A_{S_i}$ contains a fixed point of the torus $\Aut^0(V)=\Gm^2$. By Corollary~\ref{cor:fixed-points} this point is the vertex of a cubic cone, say $S_1\in \SSS_1$.
By Lemma~\ref{lemma:lines-bis}\ref{lemma:lines:6}, $S_1$ meets any of the remaining cones $S_i$, $i=2,\dots,6$.
In particular, $S_1$ meets the cones, say $S_2$, $S_4$, and $S_6$, which belong to the other component $\SSS_2$ of $\SSS(V)$.
Then $S_1\cdot S_i=0$ for $i=2,4,6$, see Lemma~\ref{lemma:intersection}. By Lemma~\ref{lemma:lines-bis}\ref{lemma:lines:5}, $S_1$ shares a common ruling with $S_2$, $S_4$, and $S_6$. 
These three rulings of $S_1$ are $\Aut^0(V)$-invariant and pairwise distinct, see \cite[Corollary~9.7.3(ii)]{PZ18}. It follows that the $\Aut^0(V)$-action on the base of $S_1$ is trivial, and so, any ruling of $S_1$ is invariant. 
However, this contradicts the fact that the torus $\Gm^2$ acts on $S_1$ with an open orbit, see Lemma~\ref{lemma:stabilizers-of-cones}. 
\end{proof}

\begin{rem} An alternative argument is as follows.
By Lemma~\ref{lem:cubic-cones:2}~\ref{lem:cubic-cones:2:cycle} 
one has $S_1\cap S_4=\varnothing$, and so, $v_{S_1}\notin A_{S_4}$ due to Lemma~\ref{lemma:lines-bis}\ref{lemma:lines:6}. This gives again a contradiction. 
\end{rem}

\begin{proof}[Proof of Theorem~\xref{mthm} in the case $\Aut(V)=\Gm^2$] 
Recall that the affine space $\A^4$ is a flexible variety, see, e.g., \cite[Lemma~5.5]{KZ99}.
By Proposition~\ref{prop:intersection-of-Ai}, $V$ is covered 
by the flexible Zariski open subsets $U_i=V\setminus A_{S_i}\cong \A^4$, $i=1,\ldots,6$. Thus, the criterion of Theorem~\ref{thm:criterion-MPS}\ref{thm:criterion-MPS:2} applies and gives the result. 
\end{proof}

\subsection*{The case $\Aut^0(V)=\Ga\times\Gm$}
In this subsection we let $V=V^{\aaa}_{18}$.

\begin{lem}
\label{lemma:matrix}
Any effective action of $\Ga\times \Gm$ on $\PP^2$ 
can be given in suitable coordinates by the matrices
\begin{equation}
\label{eq:matrix}
\begin{pmatrix}
\lambda & 0 & 0
\\
0 &1 & \mu
\\
0& 0 & 1
\end{pmatrix} 
\quad\text{
where}\quad\lambda \in \CC ^*,\,\, \mu\in \CC.
\end{equation}
This action has exactly two invariant lines $J_1,\, J_2\subset \PP^2$
and exactly two fixed points $P_0\in J_1\cap J_2$ and $P_1\in J_2\setminus J_1$, 
see Fig.~\xref{fig}.
\end{lem}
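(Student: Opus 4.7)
The plan is to translate the statement into a question in $\PGL_3(\CC)=\Aut(\PP^2)$, produce the normal form by a sequence of coordinate changes, and then read off invariant lines and fixed points by direct inspection.

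Since the action is effective and $\Ga\times\Gm$ is connected, it defines an injective homomorphism $\phi\colon \Ga\times\Gm\hookrightarrow\PGL_3(\CC)$. Every one-parameter torus of $\PGL_3$ is conjugate to a subtorus of the standard diagonal maximal torus, so after a change of homogeneous coordinates I may assume that $\phi(\Gm)$ acts diagonally by
\[
t\cdot [x_0:x_1:x_2]=[t^a x_0:t^b x_1:t^c x_2]
\]
for weights $(a,b,c)$ defined modulo the diagonal. Now $\phi(\Ga)$ commutes with $\phi(\Gm)$, hence lies in its centralizer in $\PGL_3$. If all three weights were distinct, that centralizer would be just the diagonal maximal torus, which contains no nontrivial unipotent elements; so two weights coincide. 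Permuting coordinates I arrange $b=c$, and then faithfulness of $\phi|_{\Gm}$ forces $|a-b|=1$. Up to the inversion automorphism $\lambda\mapsto \lambda^{-1}$ of $\Gm$, I fix $a-b=1$, so the $\Gm$-action becomes $[\lambda x_0:x_1:x_2]$.

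The centralizer of this diagonal $\Gm$ in $\PGL_3$ is the image of the block-diagonal subgroup $\Gm\times \GL_2\subset\GL_3$, and the one-dimensional connected unipotent subgroup $\phi(\Ga)$ projects into a one-parameter unipotent subgroup of the $\GL_2$ block. Every such subgroup of $\GL_2$ is conjugate to the standard upper-triangular unipotent; performing this conjugation on the last two coordinates does not affect the previous normalization, because $\Gm$ acts with a single weight on the span of $x_1,x_2$. After this further change of coordinates, $\phi(\Ga\times\Gm)$ consists precisely of the matrices~\eqref{eq:matrix}.

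The remaining assertions follow by direct computation. Requiring $\Gm$- and $\Ga$-invariance of a linear form $a_0x_0+a_1x_1+a_2x_2$ yields, in turn, $a_0=0$ or $a_1=a_2=0$, and then $a_1=0$, leaving exactly $J_1=\{x_0=0\}$ and $J_2=\{x_2=0\}$ as invariant lines. Solving the fixed-point equations gives $P_0=[0:1:0]\in J_1\cap J_2$ and $P_1=[1:0:0]\in J_2\setminus J_1$. No individual step is really an obstacle; the only bit of care is in the weight normalization, where one must verify that the combined use of faithfulness of $\phi|_{\Gm}$ and the $\Gm$-automorphism $\lambda\mapsto\lambda^{-1}$ genuinely reduces matters to the single case $a-b=1$ rather than some $\pm n$.
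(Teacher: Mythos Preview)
Your proof is correct and follows essentially the same approach as the paper's. The paper compresses the argument into a single sentence invoking the Jordan normal form of the elements of $\Ga\times\Gm$ in $\PGL_3(\CC)$ modulo scalars, whereas you spell out the same normalization step by step (diagonalize $\Gm$, then conjugate $\Ga$ inside its centralizer, then read off the invariants); these are equivalent formulations of the same linear-algebra reduction.
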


\begin{proof} 
Since the group $\Ga\times \Gm$ is abelian and acts effectively on $\PP^2$, \eqref{eq:matrix} is the only possibility 
for the Jordan normal form of its elements modulo scalar matrices.
\end{proof}

\begin{figure}[h]
\begin{tikzpicture}
\draw[name path=DD1, thick] (0,0) node[left,yshift=5pt, xshift=0pt]{$\scriptstyle J_1$}-- (3,1) ; 
\draw[name path=DD2, thick] (2,1) -- (5,0) node[right,yshift=5pt, xshift=0pt]{$\scriptstyle J_2$};
\draw[name path=DD3, white] (4.5,-0.5) -- (4.5,1) ;
\fill[name intersections={of=DD1 and DD2}] (intersection-1) circle(2.5pt);
\node at (intersection-1)[above, yshift=0.3em] {$\scriptstyle P_0$};
\fill[name intersections={of=DD2 and DD3}] (intersection-1) circle(2.5pt) ;
\node at (intersection-1)[above, xshift=7pt] {$\scriptstyle P_1$};
\end{tikzpicture}
\caption{}
\label{fig}
\end{figure}

\begin{prop}
\label{prop:211} 
For $V=V^{\aaa}_{18}$
the following assertions hold.
\begin{enumerate}
\item
\label{prop:211-1}
The action of $\Aut^0(V)=\Ga\times \Gm$ on each component
$\SSS_i\cong \PP^2$ of $\SSS(V)$, $i=1,2$ is given by matrices \eqref {eq:matrix},
and the action on $\Sigma(V)\cong \Fl(\PP^2)$ is the induced one.
\item
\label{prop:211-2}
The subfamily $\SSS_1^{\aaa}$ of $\Ga$-invariant cubic scrolls corresponds to the line $J_2$ 
on $\SSS_1=\PP^2$, and
the subfamily $\SSS_1^{\m}$ of $\Gm$-invariant cubic scrolls corresponds to $J_1$, 
see Fig.~\xref{fig}.

\item
\label{prop:211-4}
There are exactly three $\Aut^0(V)$-invariant lines $l_{i,i+1}$, $i\in \{1,\, 2,\, 3\}$ and exactly four $\Aut^0(V)$-invariant cubic scrolls $S_i$, $i=1,\dots,4$ on $V$. 
With a suitable enumeration, $l_{i,i+1}$ is a common ruling of $S_i$ and $S_{i+1}$, while $S_i$ and $S_j$ have no common ruling if $j-i\neq \pm 1$. 
Furthermore, $S_i$ and $S_j$ belong to the same connected component of $\SSS(V)$ if and only if $j\equiv i\mod 2$.

\item
\label{prop:211-3}
Any cubic scroll $S_i$ in~\ref{prop:211-4} is a cubic cone, and any cubic cone on $V$ 
coincides with one of the $S_i$'s.

\item
\label{prop:211-5}
There are exactly four $\Aut^0(V)$-fixed points $v_1,\dots,v_4$ on $V$. These points are the vertices of cubic cones $S_1,\dots,S_4$. 
Furthermore, one has
\[
v_2=l_{1,2}\cap l_{2,3}=S_1\cap S_3,\qquad v_3=l_{2,3}\cap l_{3,4}=S_2\cap S_4,\qquad
S_1\cap S_4=\varnothing.
\] 
The three lines $l_{i,i+1}$ form a chain.
\item
\label{cor:invariant-on-Va-4}
There are exactly two families of $\Ga$-invariant lines on $V$. These are the families of rulings of $S_2$ and $S_3$.
\item
\label{cor:invariant-on-Va-5}
There are exactly three families of $\Gm$-invariant lines on $V$. These are the families of rulings of $S_1$, $S_4$, and of the smooth sextic scroll $D\subset V$, see \xref{sit:Ga-Gm-cones}. 
\end{enumerate}
\end{prop}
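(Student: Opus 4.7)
The strategy is to reduce everything to the explicit matrix description of the $\Aut^0(V)=\Ga\times\Gm$-action on each $\SSS_i\cong \PP^2$ provided by Lemma~\ref{lemma:matrix}, combined with the flag-variety description of $\Sigma(V)$ from Lemma~\ref{lem-splitting-lines}. For \ref{prop:211-1}, I first observe that the action on each $\SSS_i$ is effective modulo a finite kernel: by Lemma~\ref{lem:Hilbert-schemes}\ref{lem:Hilbert-schemes-2} the generic stabilizer in $\SSS(V)$ is finite, so any connected subgroup of $\Ga\times\Gm$ acting trivially on $\SSS_i$ must be trivial. Passing to the effective quotient, Lemma~\ref{lemma:matrix} yields the matrix form~\eqref{eq:matrix}, and the action on $\Sigma(V)=\Fl(\PP^2)\subset\SSS_1\times\SSS_2$ is the restriction of the product action. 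Part~\ref{prop:211-2} then follows by direct computation: the $\Ga$-fixed locus ($\lambda=1$) is the line $\{z=0\}=J_2$, while the $\Gm$-fixed locus ($\mu=0$) consists of the line $\{x=0\}=J_1$ together with the isolated point $(1{:}0{:}0)=P_1$.

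For \ref{prop:211-4}--\ref{prop:211-5}, the key observation is that an $\Aut^0$-invariant cubic scroll in $\SSS_i\cong\PP^2$ is an $\Aut^0$-fixed point of the matrix action, hence one of the two points $P_0$ or $P_1$; so $V$ carries at most four $\Aut^0$-invariant cubic scrolls. On the other hand Lemma~\ref{lem:cubic-cones:2}\ref{lem:cubic-cones:2:iii} gives exactly two cubic cones in each $\SSS_i$, and Corollary~\ref{cor:invariant-cone} says they are all $\Aut^0$-invariant. Matching counts forces the four invariant scrolls to coincide with the four cubic cones of the chain, proving \ref{prop:211-3}. An $\Aut^0$-invariant line corresponds via Lemma~\ref{lem-splitting-lines} to a fixed flag $(p,H)\in\Fl(\PP^2)$ with $p\in\{P_0,P_1\}$ and $H\in\{J_1,J_2\}$; the incidences in Figure~\ref{fig} yield exactly three such flags, namely $(P_0,J_1)$, $(P_0,J_2)$, $(P_1,J_2)$, and the chain structure of Lemma~\ref{lem:cubic-cones:2}\ref{lem:cubic-cones:2:iii} identifies these with the common rulings $l_{i,i+1}$ of adjacent chain members $S_i,S_{i+1}$, establishing \ref{prop:211-4}. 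For \ref{prop:211-5}, since each common ruling $l_{i,i+1}$ passes through both vertices $v_{S_i},v_{S_{i+1}}$ and $|S_1\cap S_3|=1$ by Lemma~\ref{lemma:lines-bis}\ref{lemma:lines:5}, the chain of incidences $v_2\in l_{1,2}\cap l_{2,3}\subset S_1\cap S_3$ collapses to equalities, and analogously for $v_3$. To exclude other $\Aut^0$-fixed points, any positive-dimensional fixed subvariety would, by Lemma~\ref{lemma:lines}, force its generic point to be a cone vertex (ruled out, as there are only four) or else make all finitely many lines through it $\Aut^0$-invariant, producing a positive-dimensional family of invariant lines in contradiction with the count of three; any extra isolated fixed point must then lie on some $l_{i,i+1}$, and the nontrivial $\Aut^0$-action on $l_{i,i+1}\cong\PP^1$ fixes only its two endpoints $v_i,v_{i+1}$.

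Finally, for \ref{cor:invariant-on-Va-4}--\ref{cor:invariant-on-Va-5}, Lemma~\ref{lemma:stabilizers-of-cones} says $\Aut^0=\Ga\times\Gm$ acts on each invariant cubic cone $S_i$ with an open orbit; since $\Ga\times\Gm$ cannot act effectively on $\PP^1$, the kernel of the induced action on the base $\PP^1$ of $S_i$ is precisely one of the factors, so all rulings of $S_i$ form a one-parameter family of either $\Ga$- or $\Gm$-invariant lines. To enumerate these families globally, I use Lemma~\ref{lem-splitting-lines}: the $\Ga$-invariant lines on $V$ correspond to pairs $(p,H)\in\Sigma(V)$ with $p\in J_2^{(1)}$ and $H\in J_2^{(2)}$, that is, to the intersection of $J_2^{(1)}\times J_2^{(2)}\cong\PP^1\times\PP^1$ with the bidegree-$(1,1)$ divisor $\Sigma(V)$. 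The special incidence $P_0\in J_2^{(1)}\cap J_2^{(2)}$ (visible in Figure~\ref{fig}) forces this $(1,1)$-curve to split into two lines, which are identified with the rulings of the two middle cones $S_2,S_3$ of the chain. An analogous analysis for $\Gm$-invariant lines uses the $\Gm$-fixed locus $J_1\cup\{P_1\}$ in each $\SSS_i$; matching connected components yields three one-parameter families, the rulings of the end cones $S_1, S_4$ together with the rulings of the sextic scroll $D$ of Lemma~\ref{lemma:2-cones} joining the twisted cubics $\Gamma_1\subset S_1$ and $\Gamma_4\subset S_4$, whose $\Aut^0$-invariance is supplied by Lemma~\ref{lemma:2-cones}\ref{lemma:2-cones:4}. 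The main obstacle is the precise splitting of the relevant $(1,1)$-intersections into the predicted number of components and the identification of each component with the correct cone (or with $D$); this rests on the detailed incidence of $\Aut^0$-fixed points and invariant lines in Figure~\ref{fig}, lifted to both $\SSS_1$ and $\SSS_2$.
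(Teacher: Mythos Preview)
Your proposal is largely correct and follows the same strategy as the paper: deduce the matrix form on each $\SSS_i$ from effectivity plus Lemma~\ref{lemma:matrix}, then read off invariant scrolls and lines from the flag-variety description of $\Sigma(V)$. For parts~\ref{prop:211-1}--\ref{prop:211-5} the two arguments essentially coincide; the paper also cites Lemma~\ref{lem:cubic-cones:2}\ref{lem:cubic-cones:2:iii} for~\ref{prop:211-3} (though it offers an alternative direct proof as well), and for~\ref{prop:211-5} it argues, like you, that any extra fixed point would lie on one of the three invariant lines $l_{i,i+1}$, which carry only two fixed points each.

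The one genuine divergence is in~\ref{cor:invariant-on-Va-4}--\ref{cor:invariant-on-Va-5}. The paper simply enumerates the five one-parameter families of flags $(P,l)\in\Fl(\PP^2)$ with one-dimensional stabilizer and matches each to a cone or to $D$; this is transparent and requires no splitting argument. Your route via intersecting $\Sigma(V)$ with $J_2^{(1)}\times J_2^{(2)}$ as a $(1,1)$-curve in $\PP^1\times\PP^1$ also works, but your stated reason for the splitting, ``$P_0\in J_2^{(1)}\cap J_2^{(2)}$'', is incoherent as written: $J_2^{(1)}\subset\SSS_1$ and $J_2^{(2)}\subset\SSS_2$ live in different planes and cannot be intersected. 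What actually forces the splitting is that the two rulings of $\PP^1\times\PP^1$ through the flag $(P_0,J_2)$, namely $\{P_0\}\times J_2^{(2)}$ and $J_2^{(1)}\times\{[J_2]\}$, are both contained in $\Sigma(V)$ (every line through $P_0$, and every point of $J_2$, yields an incident flag). You also still owe the identification of which fixed point, $P_0$ or $P_1$, corresponds in each $\SSS_i$ to a middle versus an end cone of the chain; this is the ``main obstacle'' you flag, and it comes down to checking that the pencil of lines through $P_0$ (the ruling-line in $\SSS_2$ of the cone at $P_0$) contains both fixed points of $\SSS_2$, whereas the pencil through $P_1$ contains only one.
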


Collecting the information from Proposition~\ref{prop:211} we see that 
the configuration of the cones $S_i$ looks as the one on Fig~\ref{fig2}.

\begin{figure}[ht]
\begin{tikzpicture}
\draw[fill=black] (-3,4) circle (3pt) node[above=3pt]{$v_2$};
\draw[fill=black] (3,4) circle (3pt) node[above=3pt]{$v_4$};
\draw[fill=black] (-6,0) circle (3pt) node[below=3pt]{$v_1$};
\draw[fill=black] (0,0) circle (3pt) node[below=3pt]{$v_3$};
\draw[thick] plot [smooth,tension=1] coordinates{ (-3,4)(0,4.1)(3,4)};
\draw[thick] plot [smooth,tension=1] coordinates{ (-6,0)(-3,-0.2)(0,0)};
\draw[thick] (-3,4) -- node[above=5pt]{$l_{1,2}$} (-6,0) ;
\draw[very thick] (-3,4) -- node[below=5pt]{$l_{2,3}$} (0,0) ; 
\draw[thick] (0,0) -- node[above=5pt]{$l_{3,4}$} (3,4);
\node at (-6.3,2.5) {$S_1$};
\node at (-0.3,2.5) {$S_3$};
\node at (-3.3,1) {$S_2$};
\node at (3,1.3) {$S_4$};
\draw[thick] plot [smooth,tension=0.3] coordinates{(-6,0) (-7.9,3.9)(-3,4)};
\draw[thick] plot [smooth,tension=0.3] coordinates{(0,0) (6,0.8)(3,4)}; 
\end{tikzpicture}
\caption{}
\label {fig2}
\end{figure}

\begin{nota}
\label{sit:Ga-Gm-cones} 
There is exactly one pair of disjoint cubic cones on $V$, namely, $(S_1,S_4)$.
We let $D\subset V$ be the smooth sextic scroll as 
in Lemma~\ref{lemma:2-cones}\ref{lemma:2-cones:3}, 
that is, the union of lines on $V$ joining the corresponding points of the rational twisted cubic curves 
$\Gamma_1=S_1\cap A_{S_4}$ and $\Gamma_4=S_4\cap A_{S_1}$.
Notice that the curves $\Gamma_1$ and $\Gamma_4$, as well as the surface $D$ are
$\Aut^0(V)$-invariant.
\end{nota}

\begin{proof}[{Proof of Proposition~\xref{prop:211}}] 
Assertions~\ref{prop:211-1} and~\ref{prop:211-2} are immediate, see Lemma~\ref{lemma:matrix}. 
Assertion~\ref{prop:211-4} follows from Lemmas~\ref{lem-splitting-lines} and~\ref{lemma:matrix}.

\ref{prop:211-3}. 
Any cubic cone on $V$ is $\Aut^0(V)$-invariant, hence, 
is contained in $\{S_1,\dots,S_4\}$, see Corollary~\ref{cor:invariant-cone}.
We have to show only that any invariant cubic scroll is a cubic cone. 
Actually, this follows from Lemma~\ref{lem:cubic-cones:2}\ref{lem:cubic-cones:2:iii}, however,
we provide an alternative proof. 

Suppose to the contrary $S_2$ is smooth,
and so $S_2\cong \FF_1$. Then its exceptional section is an invariant line,
hence, it coincides with $l_{3,4}$. 
By \cite[Proposition~9.5]{PZ18} the corresponding connected component of $\SSS(V)$ contains a cubic cone, which 
must coincide with $S_4$ due to~\ref{prop:211-4}. Let
$l$ be the ruling of $S_2$ passing through the vertex $v_4$ of $S_4$. Then $l$ is a common ruling
of $S_2$ and $S_4$. This contradicts~\ref{prop:211-4}. 
Thus, $S_2$ is a cubic cone. By symmetry, $S_3$ is a cubic cone too.

Suppose further $S_1$ is smooth. 
If $S_1$ contains the vertex $v_3$ of $S_3$, then $S_1$ and $S_3$ have a common ruling,
a contradiction. Thus, $v_3\notin S_1$, and so, $S_1$ does not contain the invariant lines 
$l_{2,3}$ and $l_{3,4}$. It follows that the ruling $l_{1,2}$ of $S_1$ is the only $\Aut^0(V)$-invariant
line on $S_1$, which coincides then with the exceptional section of $S_1$, a contradiction.
Thus, $S_1$ is a cubic cone, and by symmetry, the same holds for $S_4$.

\ref{prop:211-5} 
Notice that the vertices of the cubic cones in $V$ are fixed by $\Aut^0(V)$, 
because these cones are invariant. Let $v$ be a fixed point of the $\Aut^0(V)$-action on $V$ 
different from the vertices of cubic cones. According to Lemma~\ref{lemma:lines}\ref{lemma:lines:3}, 
the number of lines on $V$ passing through $v$ is finite. Hence each of these lines is $\Aut^0(V)$-invariant. 
By~\ref{prop:211-4} such a line coincides with the common ruling $l_{i,i+1}$ of a pair $(S_i,\,S_{i+1})$, $i\in\{1,2,3\}$. 
However, any line $l_{i,i+1}$ contains exactly two $\Aut^0(V)$-fixed points, namely, the vertices $v_i$ and 
$v_{i+1}$. The remaining statements are immediate. 

\ref{cor:invariant-on-Va-4} and~\ref{cor:invariant-on-Va-5}.
By Theorem~\ref{thm:KaRa}\ref{thm:KaRa-a} we have $\Sigma(V)\cong \Fl(\PP^2)$.
By Lemma~\ref{lemma:matrix} the group $\Aut^0(V)\cong \Ga\times \Gm$
acts on $\Sigma(V)\cong \Fl(\PP^2)$ via \eqref{eq:matrix}.
Looking at Fig.\ref{fig} one can select all the lines on $V$ with one-dimensional stabilizers. 
There are exactly five such families of lines; they correspond to the following five families of flags on $\PP^2$: 
\begin{itemize}
\item
$(P_0, l)$, where $l$ runs over the pencil of lines through $P_0$;
\item
$(P_1, l)$, where $l$ runs over the pencil of lines through $P_1$;
\item
$(P, J_2)$, where $P$ runs over $J_2$;
\item
$(P, J_1)$, where $P$ runs over $J_1$;
\item
$(P, l)$, where $P$ runs over $J_1$ and $l$ passes through $P_1$.
\end{itemize}

The last family corresponds to the family of rulings of $D$, and the other four correspond 
to the families of rulings of the cubic cones $S_1,\ldots,S_4$. 
\end{proof}

\section{Affine $4$-spaces in $ V^{\aaa}_{18}$ and flexibility of affine cones}
\label{sec:general-case}

\subsection*{Affine $4$-spaces in $ V^{\aaa}_{18}$}
In this subsection we 
analyze affine charts isomorphic to $\A^4$ on the Fano-Mukai fourfold $V=V^{\aaa}_{18}$ 
of genus $10$ with $\Aut^0(V)=\Ga\times\Gm$, and provide a modified criterion of flexibility 
of affine cones over projective varieties in terms of existence of certain cylinders. 

The following proposition proves the first part of 
Theorem~\ref{thm:main}\ref{thm:main-c}; the second part will be proven in Proposition~\ref{prop:A2-cylinders}.

\begin{prop}
\label{prop:intersection-of-Ai} 
In the notation of Proposition~\xref{prop:211}
one has $\bigcap_{j=1}^4 A_{S_j}=l_{2,3}$.
\end{prop}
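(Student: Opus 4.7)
The plan is to prove $\bigcap_{j=1}^4 A_{S_j} = l_{2,3}$ by two inclusions.

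For $l_{2,3} \subseteq \bigcap_j A_{S_j}$, I will use Proposition~\ref{prop:211}\ref{prop:211-4}: the line $l_{2,3}$ is the common ruling of $S_2$ and $S_3$, so $l_{2,3}\subseteq S_2\cap S_3\subseteq A_{S_2}\cap A_{S_3}$. It also contains $v_2=S_1\cap S_3\in S_1$ and $v_3=S_2\cap S_4\in S_4$ by Proposition~\ref{prop:211}\ref{prop:211-5}, so it is a line meeting $S_1$ at $v_2$ and $S_4$ at $v_3$; Lemma~\ref{lemma:AS}\ref{lemma:AS-i} then yields $l_{2,3}\subseteq A_{S_1}\cap A_{S_4}$.

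For the reverse inclusion, set $W=\bigcap_{j=1}^4 A_{S_j}$. Being an intersection of $\Aut^0(V)$-invariant subsets, $W$ is $\Aut^0(V)$-invariant; since $\Aut^0(V)$ is connected, every irreducible component of $W$ is $\Aut^0(V)$-invariant. By Proposition~\ref{prop:211}\ref{prop:211-5} the $\Aut^0(V)$-fixed points on $V$ are $v_1,\ldots,v_4$, and since $S_1\cap S_4=\varnothing$ Lemma~\ref{lemma:lines-bis}\ref{lemma:lines:6} gives $v_1\notin A_{S_4}$ and $v_4\notin A_{S_1}$; hence the fixed points of $\Aut^0(V)$ lying in $W$ are exactly $v_2,v_3$, both on $l_{2,3}$.

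Now let $Z\subseteq W$ be an irreducible component not contained in $l_{2,3}$. By Borel's fixed point theorem $Z$ contains $v_2$ or $v_3$; by symmetry I may assume $v_2\in Z$. A component of dimension~$3$ is impossible, since each $A_{S_j}$ is an irreducible threefold, forcing $Z=A_{S_j}$ for every $j$ and hence coincidence of all four $A_{S_j}$, contradicting Proposition~\ref{prop:211}\ref{prop:211-4}. For $\dim Z=2$, the invariant cubic cones through $v_2$ are $S_1,S_2,S_3$, and the sextic scroll $D$ also passes through $v_2$ since $v_2\in S_1\cap A_{S_4}=\Gamma_1$ and $l_{2,3}$ is one of the rulings of $D$; each such candidate is eliminated by exhibiting a point of it lying outside some $A_{S_k}$, e.g.\ $v_1\in S_1\setminus A_{S_4}$. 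For $\dim Z=1$, a linear component is limited to $l_{1,2},l_{2,3},l_{3,4}$ by Proposition~\ref{prop:211}\ref{prop:211-4}, and both $l_{1,2},l_{3,4}$ are ruled out since they contain $v_1,v_4$ respectively; a non-linear invariant curve through $v_2$ would be the closure of a one-dimensional $\Aut^0(V)$-orbit, to be excluded via the weight-space decomposition of $T_{v_2}V$ under the $\Gm$-factor of $\Aut^0(V)=\Ga\times\Gm$.

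The hard part will be the complete enumeration of $\Aut^0(V)$-invariant positive-dimensional subvarieties through $v_2$ and $v_3$ beyond the obvious list $\{S_1,S_2,S_3,S_4,D\}$: any irreducible component of an invariant intersection such as $A_{S_i}\cap A_{S_j}$ passing through $v_2$ must also be examined, as must the non-linear one-dimensional orbit closures. This seems most naturally handled by a local analysis in the $\Aut^0(V)$-equivariant affine chart $U_i\cong\A^4$ centred at an invariant cubic cone, combined with the incidence geometry of lines on $V$ summarised in Lemmas~\ref{lemma:lines}--\ref{lemma:lines-bis}.
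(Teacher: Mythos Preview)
Your inclusion $l_{2,3}\subseteq\bigcap_j A_{S_j}$ is correct and matches the paper. The reverse inclusion, however, is only a plan: you yourself flag that the ``hard part'' is the complete enumeration of $\Aut^0(V)$-invariant irreducible subvarieties through $v_2$ (or $v_3$), and you do not carry it out. This is a genuine gap. There is no a priori reason a $2$-dimensional component $Z$ of $W$ must be one of $S_1,S_2,S_3,D$: the group $\Ga\times\Gm$ has $2$-dimensional generic orbits, so closures of such orbits give many invariant surfaces, and you would need the explicit weights of the $\Ga\times\Gm$-action on an equivariant chart $\A^4$ to rule these out. Likewise, the non-linear $1$-dimensional case is left to an unperformed weight-space computation. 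None of this information is available in the paper, so the proposed local analysis would be substantial new work.

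The paper avoids this classification entirely by arguing pointwise via line incidence. After establishing (as you do) that the only fixed points in $Y:=\bigcap_j A_{S_j}$ are $v_2,v_3$, and that $S_j\cap Y\subset l_{2,3}$ for all $j$, take any $P\in Y\setminus l_{2,3}$; then $P\notin S_j$ for all $j$. By Lemma~\ref{lemma:AS}\ref{lemma:AS-ii} there is, for each $j$, a unique line $l_j\subset A_{S_j}$ through $P$ meeting $S_j$. Since at most three lines on $V$ pass through $P$ (Lemma~\ref{lemma:lines}), two of these coincide, say $l:=l_i=l_j$. This $l$ lies in the surface $A_{S_i}\cap A_{S_j}$, so its $\Aut^0(V)$-orbit in $\Sigma(V)$ is at most $1$-dimensional; as $l$ is not one of the $l_{k,k+1}$, its stabilizer is exactly $1$-dimensional, and Proposition~\ref{prop:211}\ref{cor:invariant-on-Va-4}--\ref{cor:invariant-on-Va-5} forces $l$ to be a ruling of $D$, hence $\Gm$-invariant. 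From $l\cap l_{2,3}=\varnothing$ one gets $\{i,j\}=\{1,4\}$, and a short further argument with the $\Gm$-action on $l$ and the remaining line $l_2$ from $P$ to $S_2$ yields a contradiction. The point is that Proposition~\ref{prop:211}\ref{cor:invariant-on-Va-4}--\ref{cor:invariant-on-Va-5} already classifies the lines with $1$-dimensional stabilizer, so one never has to classify invariant surfaces or curves at all. I recommend reworking your argument along these lines rather than pursuing the local chart analysis.
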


The proof is done below. We need the following auxiliary facts.
Let $Y=\bigcap_{j=1}^4 A_{S_j}$. Clearly, $Y$ is $\Aut^0(V)$-invariant.

\begin{claim}
\label{claim:SS0} One has
$l_{2,3}\subset Y$.
\end{claim}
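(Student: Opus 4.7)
The plan is to verify that the line $l_{2,3}$ lies in each of the four hyperplane sections $A_{S_j}$ separately.

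First I would use the description of $A_{S}$ from Lemma~\ref{lemma:AS}\ref{lemma:AS-i}, which says that $A_{S}$ contains $S$ (since $\Sing(A_S)=S$) and in fact coincides with the union of lines on $V$ meeting $S$. So to show $l_{2,3}\subset A_{S_j}$ for each $j\in\{1,2,3,4\}$, it suffices to show that $l_{2,3}$ either is a line on $V$ that meets $S_j$, or is contained in $S_j$ itself.

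For the two middle cones, the inclusion is immediate: by Proposition~\ref{prop:211}\ref{prop:211-4}, $l_{2,3}$ is a common ruling of $S_2$ and $S_3$, so $l_{2,3}\subset S_2\subset A_{S_2}$ and $l_{2,3}\subset S_3\subset A_{S_3}$. The nontrivial part is to handle $S_1$ and $S_4$, which do not contain $l_{2,3}$ as a ruling. For this I would invoke Proposition~\ref{prop:211}\ref{prop:211-5}, which identifies $v_2=l_{1,2}\cap l_{2,3}\in S_1\cap S_3$ and $v_3=l_{2,3}\cap l_{3,4}\in S_2\cap S_4$. In particular $v_2\in l_{2,3}\cap S_1$ and $v_3\in l_{2,3}\cap S_4$, so the line $l_{2,3}$ meets each of $S_1$ and $S_4$, and the description of $A_{S_j}$ as the union of lines meeting $S_j$ gives $l_{2,3}\subset A_{S_1}$ and $l_{2,3}\subset A_{S_4}$.

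There is no real obstacle here: the claim is essentially a bookkeeping statement that follows once one has the combinatorial picture of the chain $S_1,S_2,S_3,S_4$ together with the intersection points $v_2,v_3$ on $l_{2,3}$ recorded in Proposition~\ref{prop:211}. The only point worth checking carefully is that one is genuinely allowed to apply Lemma~\ref{lemma:AS}\ref{lemma:AS-i} to the lines $l_{1,2}$ and $l_{3,4}$-adjacent cones $S_1$ and $S_4$ (rather than only to the adjacent ones), but since Lemma~\ref{lemma:AS}\ref{lemma:AS-i} is stated for an arbitrary cubic scroll in $V$, no extra hypothesis is needed.
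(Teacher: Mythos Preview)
Your proof is correct and follows essentially the same approach as the paper's: the paper's proof is the one-line observation that $l_{2,3}$ intersects all the $S_i$, hence lies in each $A_{S_i}$. You have simply unpacked this by treating the cases $j\in\{2,3\}$ (where $l_{2,3}\subset S_j$) and $j\in\{1,4\}$ (where $l_{2,3}$ meets $S_j$ at $v_2$, resp.\ $v_3$) separately, invoking the same Lemma~\ref{lemma:AS}\ref{lemma:AS-i} and Proposition~\ref{prop:211}.
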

\begin{proof}
The line $l_{2,3}$ intersects all the $S_i$. Hence, $l_{2,3}\subset A_{S_i}$
for all $i$.
\end{proof}

\begin{claim}
\label{claim:SS1} 
$S_1\cap Y=\{v_2\}$ and $S_4\cap Y=\{v_3\}$.
\end{claim}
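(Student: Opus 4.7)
The plan is as follows. The containment $v_2\in S_1\cap Y$ is immediate from Claim~\ref{claim:SS0}: since $l_{2,3}\subset Y$ and $v_2\in l_{2,3}\cap S_1$ (because $v_2=S_1\cap S_3$ by Proposition~\ref{prop:211}\ref{prop:211-5}). Symmetrically, $v_3\in l_{2,3}\cap S_4$ gives $v_3\in S_4\cap Y$. For the reverse inclusion $S_1\cap Y\subseteq\{v_2\}$, the first move is to reduce to the rational twisted cubic $\Gamma_1=S_1\cap A_{S_4}$ supplied by Lemma~\ref{lemma:2-cones}\ref{lemma:2-cones:1}: since $Y\subseteq A_{S_4}$, we obtain $S_1\cap Y\subseteq\Gamma_1\cap A_{S_3}$ (the inclusions $Y\subseteq A_{S_1}$ and $Y\subseteq A_{S_2}$ impose nothing new on $S_1$, because $S_1\subseteq A_{S_1}$ trivially and every ruling of $S_1$ passes through $v_1\in l_{1,2}\subset S_2$, giving $S_1\subseteq A_{S_2}$).

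Next I would invoke the $\Aut^0(V)$-invariance of $\Gamma_1$ (Lemma~\ref{lemma:2-cones}\ref{lemma:2-cones:4}) together with the list of fixed points in Proposition~\ref{prop:211}\ref{prop:211-5}. Among $v_1,\dots,v_4$, only $v_2$ lies on $\Gamma_1$: the point $v_1$ is excluded by Lemma~\ref{lemma:lines-bis}\ref{lemma:lines:6} (since $S_1,S_4$ are disjoint, $v_1\notin A_{S_4}$), while $v_3,v_4\in S_4$ lie outside $S_1$ because $S_1\cap S_4=\emptyset$. Thus $v_2$ is the unique $\Aut^0(V)$-fixed point on $\Gamma_1\cong\PP^1$, and it suffices to prove $\Gamma_1\not\subseteq A_{S_3}$: indeed, $\Gamma_1\cap A_{S_3}$ would then be a proper closed $\Aut^0(V)$-invariant subset of $\Gamma_1$, necessarily finite and supported on fixed points, hence $\subseteq\{v_2\}$.

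The main step is a Bezout computation inside the cubic scroll $S_1$. Since the ruling $l_{1,2}$ meets $S_3$ at $v_2$, it is one of the lines of $V$ meeting $S_3$, so $l_{1,2}\subseteq A_{S_3}\cap S_1$. If $A_{S_3}\cap S_1$ is a proper hyperplane section of $S_1\subset\PP^{12}$, it has class $H\vert_{S_1}$ of degree $3$. The twisted cubic $\Gamma_1$ is a section of $S_1$ of class $H\vert_{S_1}$ (degree $3$), distinct from the ruling $l_{1,2}$ of degree $1$, and they meet transversely at $v_2$; hence $\Gamma_1+l_{1,2}$ is an effective divisor of degree $4$, which cannot be contained in a divisor of degree $3$. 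This forces $\Gamma_1\not\subseteq A_{S_3}$, yielding $S_1\cap Y=\{v_2\}$, and the assertion $S_4\cap Y=\{v_3\}$ follows by the evident symmetry exchanging the indices $1\leftrightarrow 4$ and $2\leftrightarrow 3$.

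The main obstacle I foresee is verifying the proviso that $A_{S_3}\cap S_1$ is a proper divisor on $S_1$, i.e., that $S_1\not\subseteq A_{S_3}$; without this, the Bezout bound is vacuous. I would settle it by a direct incidence argument: a general ruling of $S_1$ is disjoint from $S_3$ (as $S_1\cap S_3=\{v_2\}$ lies only on the ruling $l_{1,2}$), so such a ruling is not itself a generating line of $A_{S_3}$; combined with the fact that the lines of $V$ meeting $S_3$ form a two-parameter family sweeping out $A_{S_3}$, this should rule out that every point of $S_1$ lies on some such line, giving $S_1\not\subseteq A_{S_3}$.
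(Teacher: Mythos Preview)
Your argument is correct. It differs from the paper's in how you establish finiteness of $S_1\cap Y$. The paper argues that two hyperplane sections of the cone $S_1$ are distinct because one avoids the vertex $v_1$ (namely $\Gamma_1=S_1\cap A_{S_4}$, a smooth twisted cubic) while the other passes through $v_1$ (hence is a union of rulings); distinct hyperplane sections of a cubic surface meet in a finite set. You instead reduce to $\Gamma_1\cap A_{S_3}$ and run a degree count: since $l_{1,2}\subset S_1\cap A_{S_3}$, the inclusion $\Gamma_1\subset A_{S_3}$ would force $S_1\cap A_{S_3}$ to contain a curve of degree $4$, impossible for a hyperplane section of degree $3$. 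Both routes conclude identically via the fixed-point list in Proposition~\ref{prop:211}\ref{prop:211-5}. The paper's argument is shorter once one has the right pair of sections; your Bezout argument is more explicit and simultaneously pins down which ruling of $S_1$ lies in $A_{S_3}$.

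Two small refinements. Your final paragraph is more involved than necessary: once you know a general ruling $\ell\subset S_1$ is disjoint from $S_3$, Lemma~\ref{lemma:AS}\ref{lemma:AS-i} (``any line contained in $A_{S}$ meets $S$'') gives $\ell\not\subset A_{S_3}$ immediately, hence $S_1\not\subset A_{S_3}$; the subsequent incidence discussion about two-parameter families is superfluous and, as phrased, does not quite prove what you want. Also, the transversality of $\Gamma_1$ and $l_{1,2}$ at $v_2$ is irrelevant and not obviously true; that they are distinct irreducible curves of degrees $3$ and $1$ already gives $\deg(\Gamma_1\cup l_{1,2})=4$.

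Your observation that $S_1\subset A_{S_2}$ (every ruling of $S_1$ passes through $v_1\in l_{1,2}\subset S_2$) is correct and worth highlighting: the paper's printed proof invokes $S_1\cap A_{S_2}$ as the second, singular hyperplane section, but this intersection is all of $S_1$ and does not cut $\Gamma_1$ down. Replacing $A_{S_2}$ by $A_{S_3}$, as you implicitly do, repairs the argument, since $v_1\in A_{S_3}$ via $l_{1,2}\ni v_2\in S_3$ while $S_1\not\subset A_{S_3}$ by the reasoning above.
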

\begin{proof}
We have $v_1\notin A_{S_4}$ because $S_1\cap S_4=\varnothing$.
Hence $S_1\cap A_{S_4}$ is a smooth irreducible hyperplane section of the cone $S_1$.
Since $v_1\in A_{S_2}$, the intersection
$S_1\cap A_{S_2}$ is a singular hyperplane section of $S_1$. Therefore,
the intersection $S_1\cap Y$ is a finite set, whose points are fixed by $\Aut^0(V)$.
Then by Proposition~\ref{prop:211}\ref{prop:211-5} and Claim~\ref{claim:SS0} we have $S_1\cap Y=\{v_2\}$.
\end{proof}

\begin{claim}
\label{claim:SS2} 
$S_2\cap Y=S_3\cap Y=l_{2,3}$.
\end{claim}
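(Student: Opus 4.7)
The plan is to exploit the fact that the vertex $v_2$ of the cone $S_2$ lies simultaneously on $S_1$ and $S_3$ (by Proposition~\ref{prop:211}\ref{prop:211-5}), so that three of the four conditions defining $Y$ are automatic on $S_2$. Concretely, any ruling $m$ of $S_2$ passes through $v_2\in S_1\cap S_3$, hence $m$ is a line on $V$ meeting both $S_1$ and $S_3$, so by Lemma~\ref{lemma:AS}\ref{lemma:AS-i} we have $m\subset A_{S_1}\cap A_{S_3}$. Taking the union over all rulings and using the trivial inclusion $S_2\subset A_{S_2}$ gives
\[
S_2\subset A_{S_1}\cap A_{S_2}\cap A_{S_3},
\qquad\text{so}\qquad S_2\cap Y = S_2\cap A_{S_4}.
\]

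It remains to prove $S_2\cap A_{S_4}=l_{2,3}$. First, $v_2\notin S_4$ (by Lemma~\ref{lemma:lines}\ref{lemma:lines:3} a point can be the vertex of at most one cubic cone), and $v_2\in A_{S_4}$ because the ruling $l_{2,3}$ of $S_2$ passes through $v_2$ and meets $S_4$ at $v_3$ (Proposition~\ref{prop:211}\ref{prop:211-5}). Thus Lemma~\ref{lemma:AS}\ref{lemma:AS-ii} applied to the cone $S_4$ at the point $v_2$ shows that $l_{2,3}$ is the \emph{unique} line on $V$ through $v_2$ meeting $S_4$. Writing $A_{S_4}=V\cap H_4$ for a hyperplane $H_4\subset\PP^{12}$, we have $v_2\in H_4$; since $S_2$ is a cubic cone with vertex $v_2$ spanning some $\PP^4\subset\PP^{12}$, the intersection $S_2\cap H_4$ is either all of $S_2$ or a union (with multiplicities) of rulings of $S_2$ through $v_2$. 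Any ruling $m$ appearing in $S_2\cap H_4$ is a line on $V$ contained in $A_{S_4}$, hence meets $S_4$ by Lemma~\ref{lemma:AS}\ref{lemma:AS-i}; by the uniqueness just established, $m=l_{2,3}$. In particular $S_2\not\subset A_{S_4}$ (otherwise all rulings of $S_2$ would equal $l_{2,3}$), and set-theoretically $S_2\cap A_{S_4}=l_{2,3}$, as required.

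Finally, the chain $S_1-S_2-S_3-S_4$ from Proposition~\ref{prop:211}\ref{prop:211-4}--\ref{prop:211-5} is invariant under the symmetry swapping $1\leftrightarrow 4$, $2\leftrightarrow 3$, which preserves $l_{2,3}$, so running the identical argument with the roles of $S_2$ and $S_3$ interchanged yields $S_3\cap Y=l_{2,3}$. The only delicate point in the argument is the step ruling out the possibility $S_2\subset A_{S_4}$; this is the place where the uniqueness statement in Lemma~\ref{lemma:AS}\ref{lemma:AS-ii} (combined with the cone structure of $S_2$) is genuinely used, and everything else is formal consequences of the incidences among the $v_i$, $S_i$, and $l_{i,i+1}$ recorded in Proposition~\ref{prop:211}.
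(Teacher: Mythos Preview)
Your argument is correct and takes a genuinely different route from the paper. The paper's proof relies on the $\Aut^0(V)$-invariance of $Y$: since $S_2\cap Y=S_2\cap\langle Y\rangle$ is a hyperplane section of the cone $S_2$ through its vertex, it is a union of rulings; these rulings are $\Aut^0(V)$-invariant, hence by Proposition~\ref{prop:211}\ref{prop:211-4} can only be $l_{1,2}$ or $l_{2,3}$, and $l_{1,2}$ is excluded by Claim~\ref{claim:SS1}. Your proof instead bypasses the group action entirely: you first reduce $S_2\cap Y$ to $S_2\cap A_{S_4}$ by the incidence $v_2\in S_1\cap S_3$, and then use the uniqueness in Lemma~\ref{lemma:AS}\ref{lemma:AS-ii} to pin down the only ruling of $S_2$ lying in $A_{S_4}$. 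This is a clean, purely geometric argument; its advantage is that it does not need the classification of invariant lines, while the paper's approach is shorter once that classification is in hand.

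One small correction: your justification ``$v_2\notin S_4$ (by Lemma~\ref{lemma:lines}\ref{lemma:lines:3} a point can be the vertex of at most one cubic cone)'' only shows that $v_2$ is not the \emph{vertex} of $S_4$, not that $v_2\notin S_4$. The right reason is simply that $v_2\in S_1$ and $S_1\cap S_4=\varnothing$ by Proposition~\ref{prop:211}\ref{prop:211-5}. With this fix, everything goes through.
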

\begin{proof}
We have $v_2\in S_2\cap Y$ and $S_2\not\subset Y$ because $l_{1,2} \not\subset Y$ by Claim~\ref{claim:SS1}.
Since $S_2\cap Y=S_2\cap \langle Y\rangle$
the intersection $S_2\cap Y$ consists of a finite number of rulings of $S_2$.
These rulings are $\Aut^0(V)$-invariant.
Since $l_{1,2}\not \subset Y$ by Claim~\ref{claim:SS1}
the only possibility is $S_2\cap Y=l_{2,3}$,
see Proposition~\ref{prop:211}\ref{prop:211-4}.
\end{proof}

\begin{proof}[Proof of Proposition~\xref{prop:intersection-of-Ai}]
Assume there is a point $P\in Y\setminus l_{2,3}$. 
By Claims~\ref{claim:SS1} and~\ref{claim:SS2} \ $P\notin S_j$ for all $j$.
By Lemma~\ref{lemma:AS}\ref{lemma:AS-ii} for any $j=1,\ldots,4$ through $P$ passes a unique line 
$l_{j}\subset A_{S_j}$ meeting $S_j$. Since there are at most three lines on $V$ passing through $P$, one has $l_{i}=l_{j}$ for some $i\neq j$.
Set $l:=l_{i}=l_{j}$. Thus, one has $P\in l\subset A_{S_i}\cap A_{S_j}$. 
By Lemma~\ref{lemma:lines}\ref{lemma:lines:3},
$V$ contains no plane. 
Hence, the set of lines contained in the surface $A_{S_i}\cap A_{S_j}$ has dimension at most one, 
cf.~\cite[Lemma~A.1.1]{Kuznetsov-Prokhorov-Shramov}.
Since $l\not\subset S_j$ for all $j$, the line $l$ cannot be
$\Aut^0(V)$-invariant, see Proposition~\ref{prop:211}\ref{prop:211-4}.
Therefore, the stabilizer $G$ of $l$ under the $\Aut^0(V)$-action on $\Sigma(V)$ is one-dimensional. 
Since $l\not\subset S_j$ for all $j$, due to
Proposition~\ref{prop:211}\ref{cor:invariant-on-Va-4}-\ref{cor:invariant-on-Va-5}, $l$ is a ruling of $D$, and so, is $\Gm$-invariant. 
The rulings $l$ and $l_{2,3}$ of $D$ are disjoint, because $P\notin l_{2,3}$ by our choice. 
It follows from Claim ~\ref{claim:SS2} that $l\cap S_2=l\cap S_3=\varnothing$. So, we have $\{i,j\}=\{1,4\}$, that is, $l$ meets $S_1$ and $S_4$ in $\Gm$-fixed points. 
Besides, there exists a unique line $l_2\neq l$ in $V$ joining $P$ and $S_2$. 
If the stabilizer of $P$ in $G$ is finite, then $l=\overline{G\cdot P}\subset Y$. In particular, the intersection point $P_1$ of $l$ and $S_1$ lies on $Y$. 
By Claim ~\ref{claim:SS1} one has $P_1=v_2$, contrary to the fact that $l\cap S_2=\varnothing$. Hence, $P$ is fixed by $\Gm$,
and then the line $l_2$ is $\Gm$-invariant. Since $l_2$ is not a ruling of $D$ or of one of the $S_i$,
we get a contradiction with Proposition~\ref{prop:211}\ref{cor:invariant-on-Va-5}.
\end{proof}

\subsection*{Criteria of flexibility of affine cones}\label{ss:flex-crit}

To formulate the flexibility criteria, we need the following notions.

\begin{defi}[{\cite[Definitions 3--4]{Pe13}}]
\label{def:invariant} 
An open covering $(U_i)_{i\in I}$ 
of a projective variety $V$ by the $\A^1$-cylinders $U_i\cong\A^1\times Z_i$ is called 
\emph{transversal} if it does not admit any proper invariant subset. 
A subset $Y\subset V$ is proper if it is nonempty and different from $V$. 
It is called \emph{invariant} with respect to this covering if 
for any cylinder $ U_i\to Z_i$, $i\in I$, the intersection $Y\cap U_i$ is
covered by the fibers of 
$U_i\to Z_i$. 
\end{defi}

\begin{thm}
\label{thm:criterion-MPS} 
Let $(V,H)$ be a polarized smooth projective variety. 
Then the affine cone over $(V,H)$ is flexible if one of the following holds:
\begin{enumerate}
\item
\label{thm:criterion-MPS:1} 
\textup{(\cite[Theorem~5]{Pe13})} 
$V$ admits a transversal covering by a family of $\A^1$-cylinders 
$U_i=V\setminus \Supp(D_i)\cong \A^1\times Z_i$, where $Z_i$ is a smooth affine variety, $i\in I$;
\item
\label{thm:criterion-MPS:2} 
\textup{ (\cite[Theorem~1.4]{MPS18})} 
$V$ admits a
covering by a family of flexible Zariski open subsets $U_i=V\setminus \Supp(D_i)$, $i\in I$,
\end{enumerate}
where in the both cases the $D_i$ are effective $\QQ$-divisors on $V$ with $D_i\sim_{\QQ} H$, $i\in I$. 
\end{thm}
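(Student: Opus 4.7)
The plan is to reduce both statements to the flexibility criterion of \cite{AFKKZ13}, according to which an affine variety $X$ is flexible whenever $\SAut(X)$ acts transitively on $\reg(X)$. The affine cone $X=\Spec R$ over $(V,H)$ is built from the section ring $R=\bigoplus_{n\ge 0}H^0(V,nH)$, and the task is therefore to manufacture enough homogeneous $\Ga$-actions on $X$ to produce a single $\SAut(X)$-orbit equal to $\reg(X)$.

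The first step, common to both parts, is the standard Demazure-type lifting. Fixing $i$, the hypothesis $D_i\sim_\QQ H$ combined with the $\A^1$-cylinder structure $U_i=V\setminus\Supp(D_i)\cong\A^1\times Z_i$ (in part~\ref{thm:criterion-MPS:1}) or with the flexibility of $U_i$ (in part~\ref{thm:criterion-MPS:2}) produces locally nilpotent derivations on $\OOO(U_i)$ with controlled poles along $\Supp(D_i)$. After clearing denominators these extend to homogeneous LNDs on $R$, and hence to $\Ga$-actions on $X$ which fix the vertex and descend on $V$ to the given cylinder projection, or to a flexible motion inside the chart.

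For part~\ref{thm:criterion-MPS:1}, I would then translate the transversality condition directly: the family of lifted $\Ga$-actions cannot preserve any proper closed subset of $X$ off the vertex, since such a subset would project to a proper invariant subset of the covering in the sense of Definition~\ref{def:invariant}. It follows that there is an $\SAut(X)$-orbit in $\reg(X)$ that cannot be a proper subset, hence equals $\reg(X)$. For part~\ref{thm:criterion-MPS:2}, flexibility of each $U_i$ supplies pointwise, at every $p\in U_i$, a family of homogeneous $\Ga$-actions whose velocity vectors at every lift of $p$ to $X$ span the tangent space; gluing over the covering once more yields a single $\SAut(X)$-orbit equal to $\reg(X)$. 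A final appeal to \cite{AFKKZ13} upgrades transitivity to $m$-transitivity for all $m$, which is flexibility.

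The main obstacle, and the technical heart of both \cite{Pe13} and \cite{MPS18}, is the lifting step: one must ensure that the homogenization of each local LND is well-defined on $R$ and genuinely integrates to a $\Ga$-action on the whole cone. This is exactly where the assumption $D_i\sim_\QQ H$ is indispensable---it matches the pole order of the LND along $\Supp(D_i)$ with the grading of $R$, and fails for a general effective divisor. A further subtle point in part~\ref{thm:criterion-MPS:1} is to verify that the set-theoretic transversality of the cylinder covering really forces dynamical transversality of the lifted $\Ga$-actions on $\reg(X)$, which requires carefully tracking the interaction between the cylinder base $Z_i$ and the zero locus of the lifted LND on the cone.
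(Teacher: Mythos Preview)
The paper does not supply its own proof of this theorem: it is stated as a quotation of two external results, with the attributions \cite[Theorem~5]{Pe13} and \cite[Theorem~1.4]{MPS18} given inline in the statement itself, and no proof environment follows. So there is no ``paper's own proof'' to compare against; the authors simply import these criteria and apply them elsewhere (in the proof of Theorem~\ref{mthm} for the torus case, and via Proposition~\ref{thm:criterion-new} for the mixed case).

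Your sketch is a reasonable high-level reconstruction of how the arguments in \cite{Pe13} and \cite{MPS18} actually run, and it correctly identifies the essential mechanism: the polar condition $D_i\sim_{\QQ} H$ is exactly what allows one to homogenize a locally nilpotent derivation on $\OOO(U_i)$ to a homogeneous LND on the section ring $R$, hence to a $\Ga$-action on the cone $X$ normalized by the $\Gm$-action. You also correctly flag the appeal to \cite{AFKKZ13} at the end, which upgrades transitivity of $\SAut(X)$ on $\reg(X)$ to infinite transitivity. One refinement worth noting for part~\ref{thm:criterion-MPS:1}: the argument in \cite{Pe13} does not quite proceed by projecting a hypothetical invariant subset of $X$ down to $V$; rather, one works directly with the $\Gm$-saturated orbit of $\SAut(X)$ on $\reg(X)$ and observes that its image in $V$ is invariant with respect to the cylinder covering in the sense of Definition~\ref{def:invariant}, hence must be all of $V$. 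This is essentially what you say, but the direction of the argument matters for making the transversality hypothesis bite.
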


For instance, the affine cone over $(V,H)$ is flexible provided one can find an open covering 
$\{U_i\}_{i\in I}$ of $V$
by toric affine varieties $U_i=V\setminus \Supp(D_i)$ with no torus factor, that is, non-decomposable as a product 
$U_i=W_i\times (\A^1\setminus\{0\})$. Indeed, any such variety $U_i$
is flexible \cite[Theorem~2.1]{AKZ12}. In the simplest case where $\Pic(V)=\ZZ$ 
and $V$ admits an open covering by the affine spaces, the affine cone $X$ over $(V,H)$ is flexible 
whatever is an ample polarization $H$ of $V$. 
According to Theorem~\ref{thm:main}\ref{thm:main-a},\ref{thm:main-b} such a covering exists for any Fano-Mukai 
fourfold $V_{18}$ with a reductive automorphism group. In the remaining case of $V=V_{18}^{\aaa}$ 
with $\Aut^0(V)=\Ga\times\Gm$
we ignore whether $V$ admits a covering by flexible affine charts, 
cf.~Proposition~\ref{prop:intersection-of-Ai}. Hence, we cannot apply the criterion of 
Theorem~\ref{thm:criterion-MPS}\ref{thm:criterion-MPS:2} in this case. Instead, 
we will apply the following version, which mixes the two criteria of Theorem~\ref{thm:criterion-MPS}. 

Let $(V,H)$ be a polarized projective variety, and let $A$ be
an effective divisor on $V$. 
We say that an open set $U=V\setminus \Supp(A)$ is \emph{polar} if $A\sim_{\QQ} H$. 

\begin{prop}
\label{thm:criterion-new}
Let $(V,H)$ be a smooth projective variety of dimension $n\ge 3$ with an ample polarization. 
Suppose $V$ possesses a family of flexible polar $\A^1$-cylinders 
$U_i=V\setminus \Supp(A_i)\cong\A^1\times Z_i$, $i\in I$,
where $Z_i$ is an affine variety of dimension $n-1$. 
Assume further that 
\begin{itemize}
\item
$D:=\bigcap_{i\in I} A_i$ is a subvariety of $V$ of dimension $m\le n-2$;
\item
through any point of $V$ pass at most $k$ components of $D$;
\item
any point $P\in D$ is contained either in a principal open polar $\A^{m+1}$-cylinder $U_P\cong\A^{m+1}\times Z_P$ 
in $V$, or in $k+1$ principal polar open $\A^m$-cylinders $U_{P,j}\cong \A^m\times Z_{P,j}$ in $V$, $j=1,\ldots,k+1$, 
where $Z_P$ and $Z_{P,j}$ are affine varieties, and for any two cylinders $U_{P,j}$ and $U_{P,j'}$ with $j\neq j'$ 
the $\A^m$-fibers through $P$ of the natural projections $U_{P,j}\to Z_{P,j}$ and $U_{P,j'}\to Z_{P,j'}$ 
meet properly, that is, the dimension of their intersection is smaller than $m$.
\end{itemize}
Then the affine cone over $(V,H)$ is flexible.
\end{prop}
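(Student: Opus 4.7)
The plan is to reduce the statement to Perepechko's transversality criterion, Theorem~\ref{thm:criterion-MPS}\ref{thm:criterion-MPS:1}, by exhibiting a transversal covering of $V$ by polar $\A^1$-cylinders over smooth affine bases. The given family $\{U_i\}_{i\in I}$ already provides such cylinders covering $V\setminus D$; the task is to extend it across $D$ using the local cylinders $U_P$ and $U_{P,j}$ supplied by the hypothesis, and then to verify that the resulting family admits no proper invariant subset in the sense of Definition~\ref{def:invariant}.

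First I would build the augmented family $\mathcal{F}$ as follows. Whenever $P\in D$ lies in a polar $\A^{m+1}$-cylinder $U_P\cong \A^{m+1}\times Z_P$, any linear decomposition $\A^{m+1}\cong \A^1\times \A^m$ turns $U_P$ into a polar $\A^1$-cylinder over the smooth affine base $\A^m\times Z_P$ of dimension $n-1$, and I would include in $\mathcal{F}$ all such $\A^1$-structures obtained by varying the decomposition. Whenever $P$ is covered instead by $k+1$ polar $\A^m$-cylinders $U_{P,j}\cong \A^m\times Z_{P,j}$ (with $m\ge 1$), a similar decomposition $\A^m\cong \A^1\times \A^{m-1}$ yields polar $\A^1$-cylinders with smooth affine bases of dimension $n-1$. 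The boundary case $m=0$ forces the first alternative, and then $U_P$ is itself already a polar $\A^1$-cylinder. Finally, on each flexible chart $U_i$ I would include the $\A^1$-cylinder structures produced by its $\SAut$-action, chosen numerous enough that $Y\cap U_i$ being a union of fibres for all of them forces $Y\cap U_i\in\{\varnothing,\, U_i\}$.

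The crux is transversality. Assume for contradiction that $Y\subsetneq V$ is a nonempty closed subset such that $Y\cap U$ is a union of fibres for every cylinder $U\in\mathcal{F}$. If $Y$ meets some $U_i$, the construction of $\mathcal{F}$ forces $Y\supseteq U_i$; then propagation through overlapping members of $\mathcal{F}$ yields $Y=V$, contradicting properness. Hence $Y\subseteq D$. Pick $P\in Y$. In the $\A^{m+1}$-cylinder case, $U_P\not\subseteq D$ since $\dim U_P=n>m$, and the same flexibility-style argument applied to the many $\A^1$-decompositions of $U_P$ forces $U_P\subseteq Y$, reconnecting $Y$ with $V\setminus D$ and reducing to the previous case. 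In the $\A^m$-cylinder case, each fibre $F_j$ of $U_{P,j}\to Z_{P,j}$ through $P$ is irreducible of dimension $m$ and contained in $Y$; the proper intersection hypothesis makes the $F_j$'s pairwise distinct, so any two of them that lie in $D$ must lie in distinct $m$-dimensional irreducible components of $D$ for dimension reasons. Since at most $k$ components of $D$ pass through $P$ while there are $k+1$ fibres $F_j$, at least one $F_j$ is not contained in $D$, again forcing $Y$ to meet $V\setminus D$ and producing a contradiction.

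The hard part will be the flexibility-propagation step, namely showing that invariance of $Y\cap U$ under the $\A^1$-cylinder structures in $\mathcal{F}$ on a flexible chart $U$ (either a $U_i$ or a $U_P$) does in fact force $Y\cap U=U$ whenever $Y\cap U\neq\varnothing$. This requires using the $\SAut$-action to construct sufficiently many independent $\A^1$-cylinder structures on $U$ and invoking the classical characterization of flexibility. A secondary subtlety is the pigeon-hole step in the $\A^m$-cylinder case, where one must carefully extract from the proper intersection of fibres, the dimension constraint $\dim D\le m$, and the bound $k$ on the number of components of $D$ through $P$ the conclusion that some $F_j$ genuinely escapes $D$.
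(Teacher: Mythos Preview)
Your strategy coincides with the paper's: reduce to Perepechko's criterion (Theorem~\ref{thm:criterion-MPS}\ref{thm:criterion-MPS:1}) by assembling the $U_i$, $U_P$, $U_{P,j}$ with all their induced $\A^1$-cylinder structures and checking transversality. Two corrections are in order.

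First, in the $\A^{m+1}$-cylinder case you cannot conclude $U_P\subseteq Y$: the chart $U_P\cong\A^{m+1}\times Z_P$ need not be flexible, since nothing is assumed about $Z_P$, so ``the same flexibility-style argument'' does not apply to $U_P$ as a whole. What invariance under all linear $\A^1$-decompositions of the $\A^{m+1}$-factor \emph{does} give you is that the single $\A^{m+1}$-fibre through $P$ lies in $Y$; since this fibre has dimension $m+1>\dim D$, it already escapes $D$ and forces $Y$ to meet some $U_i$. The paper argues exactly this way (choosing one $\A^1$-ruling through $P$ not contained in $D$), and your sentence ``forces $U_P\subseteq Y$'' should be weakened accordingly. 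The same remark applies to your ``propagation through overlapping members'' in the first case: overlap with $U_P$ or $U_{P,j}$ does not swallow these charts whole, but you do not need that---one $\A^1$-ruling through each $P\in D$ meeting $V\setminus D\subset Y$ suffices to get $P\in Y$.

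Second, drop the word ``closed'' for $Y$: Definition~\ref{def:invariant} concerns arbitrary invariant subsets, and the argument nowhere uses closedness. As for the step you flag as hard, it is in fact short: since $\SAut(U_i)$ acts $2$-transitively on $U_i$, for any $P\in Y\cap U_i$ and any $P'\in U_i$ there exists $\alpha\in\SAut(U_i)$ with $\alpha(P)=P$ and $\alpha^{-1}(P')\in l$, where $l$ is the original ruling through $P$; then $\alpha(l)$ is a ruling of the $\alpha$-transported cylinder structure, contains both $P$ and $P'$, and hence lies in $Y$. Your pigeon-hole argument in the $\A^m$-case is correct and is precisely what the paper leaves implicit.
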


\begin{proof}
We use the criterion of Theorem~\ref{thm:criterion-MPS}\ref{thm:criterion-MPS:1}, 
that is, we show the existence of a transversal covering of $V$ by polar open $\A^1$-cylinders. 
For such a covering, we take the union of the collections $\{U_i\}$, $\{U_P\}$, and $\{U_{P,j}\}$, 
where each member is endowed with all possible structures of an $\A^1$-cylinder. 

Let $Y\subset V$ be a nonempty subset invariant with respect to the above covering of $V$ by $\A^1$-cylinders, 
see Definition~\ref{def:invariant}. We claim that
if $Y\cap U_i\neq\varnothing$ for some $i\in I$, then $Y\supset U_i$. 
Indeed, let $P\in Y\cap U_i$, and let $l$ be the ruling of the $\A^1$-cylinder $U_i$ passing through $P$. 
Then $l\subset Y$ because $Y$ is invariant. 
Since $U_i$ is flexible, for any point $P'\in U_i$ different from $P$ one can find an automorphism $\alpha\in\SAut(U_i)$ 
such that $\alpha(P)=P$ and $\alpha^{-1}(P')\in l$. Then $\alpha(l)$ is a ruling of a new $\A^1$-cylinder structure on $U_i$. 
Since $P,P'\in \alpha(l)$ and $P\in Y$, where $Y$ is invariant, then also $P'\in \alpha(l)\subset Y$. Hence, one has $U_i\subset Y$, as claimed.

It follows that
$Y\cap U_j\neq\varnothing$ for any $j\in I$. Thereby, one has $Y\supset \bigcup_{i\in I} U_i=V\setminus D$. 
Due to our assumptions, for any point $P\in D$ one can choose either a principal open $\A^{m+1}$-cylinder $U_{P}$, 
or a principal open $\A^m$-cylinder $U_{P,j}$ such that the $\A^m$-fiber passing through $P$ of the projection $U_{P,j}\to Z_{P,j}$ 
is not contained in $D$. Then for a suitable $\A^1$-cylinder structure on $U_P$ or $U_{P,j}$, respectively, the ruling $l\cong\A^1$ 
passing through $P$ is not contained in $D$, and so, meets $V\setminus D\subset Y$. Then one has $P\in l\subset Y$. 
Since this holds for any $P\in D$, one has $D\subset Y$, and so, $Y=V$, that is, $Y$ is not a proper subset of $V$. 

The latter argument shows as well that a nonempty invariant subset $Y\subset V$ cannot be contained in $D$. 
Thus, the criterion of Theorem~\ref{thm:criterion-MPS}\ref{thm:criterion-MPS:1} applies and yields the result. 
\end{proof}

Using Proposition~\ref{prop:intersection-of-Ai} we deduce such a corollary.

\begin{cor} 
\label{cor:criterion-V}
The affine cones over the Fano-Mukai fourfold $V$ with $\Aut^0(V)=\Ga\times\Gm$ are flexible provided any point $P$ 
of the common ruling $l_{2,3}$ of the cubic cones $S_2$ and $S_3$ on $V$ is contained in a principal open $\A^2$-cylinder $U_{P}$ in $V$.
\end{cor}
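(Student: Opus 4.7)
\medskip

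\noindent\textbf{Plan.} The corollary is essentially a direct application of the flexibility criterion in Proposition~\ref{thm:criterion-new}; all that needs to be done is to verify its hypotheses one by one with the right choice of data.

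For the family of flexible polar $\A^1$-cylinders $\{U_i\}_{i\in I}$ required by Proposition~\ref{thm:criterion-new}, I would take the four affine charts $U_i := V\setminus A_{S_i}$ for $i=1,\ldots,4$, one for each of the four $\Aut^0(V)$-invariant cubic cones produced in Proposition~\ref{prop:211}\ref{prop:211-3}--\ref{prop:211-4}. By Lemma~\ref{lemma:AS}\ref{lemma:AS:1} each $U_i$ is isomorphic to $\A^4$, which is flexible by \cite[Lemma~5.5]{KZ99} and carries an $\A^1$-cylinder structure via the trivial decomposition $\A^4\cong\A^1\times\A^3$. Since $V$ is a Fano variety of Picard number one, every effective divisor on $V$ is $\QQ$-linearly equivalent to a positive multiple of any chosen ample polarization $H$, so each $A_{S_i}$ is polar; in particular each $U_i$ is a principal polar open subset in the sense of the definition preceding Proposition~\ref{thm:criterion-new}.

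By Proposition~\ref{prop:intersection-of-Ai} the intersection $D:=\bigcap_{i=1}^{4}A_{S_i}$ equals the line $l_{2,3}$, hence is irreducible of dimension $m=1\le n-2=2$ with $n=4$, and through every point of $D$ passes exactly $k=1$ irreducible component of $D$. With these parameters, the second hypothesis of Proposition~\ref{thm:criterion-new} at a point $P\in D$ requires either a principal open polar $\A^{m+1}=\A^{2}$-cylinder through $P$, or $k+1=2$ principal open polar $\A^{m}=\A^{1}$-cylinders through $P$ whose rulings at $P$ meet properly. The hypothesis of the present corollary grants the first alternative at every $P\in l_{2,3}$, so all conditions of Proposition~\ref{thm:criterion-new} are met, and the flexibility of any affine cone over $V$ follows at once.

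\medskip

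\noindent\textbf{Main obstacle.} There is no genuine obstacle internal to this corollary: once Proposition~\ref{thm:criterion-new} and Proposition~\ref{prop:intersection-of-Ai} are in place, the proof is a bookkeeping verification of hypotheses. The real difficulty is hidden in the \emph{assumption} of the corollary itself, namely the existence, for every point $P\in l_{2,3}$, of a principal open polar $\A^2$-cylinder in $V$ containing $P$. That construction is the substantive content of the later sections, where it is carried out by exploiting the smooth sextic scroll $D$ of \xref{sit:Ga-Gm-cones} together with the geometry of linear projections from lines on $V$.
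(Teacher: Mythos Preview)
Your verification is correct and matches the paper's approach: the corollary is deduced directly from Proposition~\ref{thm:criterion-new} together with Proposition~\ref{prop:intersection-of-Ai}, exactly as you lay it out, with $I=\{1,\ldots,4\}$, $U_i=V\setminus A_{S_i}\cong\A^4$, $D=l_{2,3}$, $m=1$, $k=1$, and the $\A^{m+1}=\A^2$-cylinder alternative supplied by the hypothesis. The observation that $\Pic(V)\cong\ZZ$ makes every principal open subset automatically polar is the right way to handle the polarity requirement for both the $U_i$ and the $U_P$.

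One small inaccuracy, not in the proof but in your ``Main obstacle'' commentary: the later construction of the $\A^2$-cylinders through points of $l_{2,3}$ does \emph{not} use the sextic scroll $D$ of~\ref{sit:Ga-Gm-cones} or projections from lines on $V$. Rather, one projects from a general smooth cubic scroll $S\in\SSS(V)$ with $P\notin A_S$ (such $S$ exists by Lemma~\ref{lemma:capAS}) via the Sarkisov link $\theta_S\colon V\dashrightarrow W$ to the del Pezzo quintic fourfold, and then invokes Proposition~\ref{prop:cylinder0} on $W$; the projection from a line occurs only at the second stage, on $W$, not on $V$.
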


In Proposition~\ref{prop:A2-cylinders} we construct such an open covering of $l_{2,3}$ in $V$ by principal open $\A^2$-cylinders. 
Combining with Corollary~\ref{cor:criterion-V} this gives a proof of Theorem~\ref{mthm} in the remaining case $\Aut^0(V)=\Ga\times\Gm$.

\section{$\A^2$-cylinders in smooth quadric fourfolds and in the del Pezzo quintic fourfold} 
\label{section:quadric}

The next lemma on the existence of an $\A^2$-cylinder will be used the proof of Proposition~\ref{prop:cylinder0} below. 

\begin{lem}
\label{lem:link0}
Let $Q\subset \PP^5$ be a smooth quadric, and let $Q',\, Q^{\star}$ be distinct hyperplane sections of $Q$.
Let $Q_1,\dots,Q_k$ be the members of the pencil $\langle Q',\, Q^{\star}\rangle$ generated by $Q'$ and $Q^{\star}$
which have singularities outside $Q'\cap Q^{\star}$, and let $P_i$ be the unique singular point of $Q_i$. 
Given a point $P\in Q\setminus (Q'\cup Q^{\star}\cup \{P_1,\dots,P_k\})$ there exists a principal affine 
open subset $U=U_P\subset Q\setminus (Q'\cup Q^{\star})$
such that
\begin{enumerate}
\item
\label{lem:link1}
$P\in U$;
\item
\label{lem:link2}
$U\cong \A^2\times Z$, where $Z$ is an affine surface.
\end{enumerate}
\end{lem}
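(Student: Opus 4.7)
\emph{Plan.} The approach is to construct $U$ as the complement of $Q'\cup Q^\star$ together with the tangent hyperplane section $T_{P_0}Q\cap Q$ for a carefully chosen point $P_0$ lying in the base locus $B:=Q'\cap Q^\star$. The key feature is that if $P_0\in Q'\cap Q^\star$, then in the affine chart $\A^4\cong Q\setminus T_{P_0}Q$ the equations of $Q'$ and $Q^\star$ become \emph{linear}, which reduces the geometry of $U$ to that of $\A^4$ minus two affine hyperplanes, yielding an explicit product $\A^2\times\Gm^2$.

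The first step is to choose $P_0\in B$ with $P\notin T_{P_0}Q$, equivalently (by the symmetry of polarity of the smooth quadric $Q$) with $P_0\notin T_PQ$. Such a choice exists exactly when $B\not\subset T_PQ$. A direct polarity computation shows that $B\subset T_PQ$ holds if and only if $P$ is the vertex of a singular member of the pencil $\langle Q', Q^\star\rangle$: indeed, each singular member $Q_i=H_i\cap Q$ satisfies $H_i=T_{P_i}Q\supset B$, so the tangent hyperplanes at $P_i$'s all contain $B$, and the locus of such points on $Q$ is precisely $\{P_1,\ldots,P_k\}$. Thus the hypothesis $P\notin\{P_1,\ldots,P_k\}$ is exactly what guarantees the existence of $P_0$.

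Next, linear projection from $P_0$ identifies $Q\setminus T_{P_0}Q$ with $\A^4$. Since $P_0$ lies on both $Q'$ and $Q^\star$, the quadratic terms in the affine equations of $Q'$ and $Q^\star$ vanish, and these hyperplane sections restrict to two linearly independent affine forms $L_1$ and $L_2$ on $\A^4$ (linear independence follows because $Q'\neq Q^\star$). A short polarity argument also shows that $L_1$ and $L_2$ are non-parallel (their gradients are independent) whenever $B\not\subset T_{P_0}Q$, which is automatic for $P_0\in B$. Completing $L_1,L_2$ to a system of affine coordinates on $\A^4$, one obtains
\[
U := Q\setminus(T_{P_0}Q\cup Q'\cup Q^\star) \;\cong\; \A^4\setminus(\{L_1=0\}\cup\{L_2=0\}) \;\cong\; \A^2\times\Gm^2,
\]
so $U\cong\A^2\times Z$ with $Z=\Gm^2$ an affine surface.

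Finally, $U$ is a principal affine open subset of $Q\setminus(Q'\cup Q^\star)$: writing $h_0, h', h^\star$ for linear forms defining $T_{P_0}Q$, $Q'$, $Q^\star$, the rational function $f:=h_0^2/(h'h^\star)$ is regular on $Q\setminus(Q'\cup Q^\star)$ with zero locus equal to the trace of $T_{P_0}Q$, so $U$ is the localization of $Q\setminus(Q'\cup Q^\star)$ at $f$. The point $P$ lies in $U$ by the very choice of $P_0$. The main technical point---and hence the main obstacle---is the polarity observation that identifies the locus $\{P\in Q: B\subset T_PQ\}$ with the finite set $\{P_1,\ldots,P_k\}$; once this is in place, the rest reduces to an explicit coordinate computation in the affine chart.
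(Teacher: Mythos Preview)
Your approach is essentially the same as the paper's: both pick a point $P^\bullet$ (your $P_0$) in the base locus $B=Q'\cap Q^\star$ whose tangent hyperplane section $Q^\bullet=T_{P^\bullet}Q\cap Q$ avoids $P$, project from $P^\bullet$ to identify $Q\setminus Q^\bullet\cong\A^4$, and observe that $Q',Q^\star$ become a pair of intersecting affine hyperplanes there, giving $U\cong\A^2\times\Gm\times\Gm$. Your polarity argument that $B\subset T_PQ$ forces $P\in\{P_1,\dots,P_k\}$ matches the paper's reasoning exactly.

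One small inaccuracy: you write that the non-parallelism condition $B\not\subset T_{P_0}Q$ ``is automatic for $P_0\in B$''. Not quite. A polarity computation shows that for $P_0\in B$ one has $B\subset T_{P_0}Q$ if and only if $P_0\in\Sing(B)$: indeed $B\subset T_{P_0}Q$ is equivalent to $\langle B\rangle=\PP^3\subset T_{P_0}Q$, i.e.\ $P_0\in(\PP^3)^\perp\cap\PP^3$, which is precisely the radical of $Q|_{\PP^3}$. So if $B$ happens to be a singular quadric surface you must also keep $P_0$ off $\Sing(B)$. This is easily arranged, since $B\setminus(T_PQ\cup\Sing(B))$ is nonempty: $B$ is two-dimensional while $T_PQ\cap B$ (by hypothesis on $P$) and $\Sing(B)$ are proper closed subsets. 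The paper sidesteps the issue by simply taking $P^\bullet\in B$ general. With this adjustment your argument is complete, and your explicit verification that $U$ is a \emph{principal} open subset (via the regular function $f=h_0^2/(h'h^\star)$) fills in a detail the paper leaves implicit.
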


\begin{proof}
Pick a general point $P^{\bullet}\in Q'\cap Q^{\star}$, and let $\mathbf{T}_{P^{\bullet}}Q\subset \PP^5$ be the embedded tangent space to $Q$ at $P^{\bullet}$. 
The projection with center $P^{\bullet}$ defines an isomorphism 
\[
Q\setminus Q^{\bullet}\cong \PP^4\setminus \PP^3 \cong \A^4,\quad\text{where}\quad Q^{\bullet}:=Q\cap \mathbf{T}_{P^{\bullet}}Q.
\] 
The quadric cone $Q^{\bullet}$ with vertex $P^{\bullet}$ coincides with the union of lines on $Q$ passing through $P^{\bullet}$. 
If the quadric cone $\Delta_P(Q)=Q\cap \mathbf{T}_PQ$ with vertex $P\in Q\setminus (Q'\cap Q^{\star})$ contains $Q'\cap Q^{\star}$, then $\Delta_P(Q)$ 
coincides with a member $Q_i$ of the pencil $\langle Q', Q^{\star}\rangle$, which has the singular point $P=P_i$ for some $i\in\{1,\ldots,k\}$. 
However, the latter is excluded by our assumption. So, $Q'\cap Q^{\star}\not\subset \Delta_P(Q)$. Hence, for
the general point $P^{\bullet}\in Q'\cap Q^{\star}$ the line joining $P$ and $P^{\bullet}$ is not contained in $Q$. The latter implies $P\notin Q^{\bullet}$.

The images of $Q'\setminus Q^{\bullet}$ and $Q^{\star}\setminus Q^{\bullet}$ in $\A^4=\PP^4\setminus \PP^3$ under the projection with center $P^{\bullet}$ 
is a pair of affine hyperplanes with nonempty intersection.
Thus, we obtain
\begin{equation}
\label{eq:projection}
P\in Q\setminus (Q^{\bullet}\cup Q'\cup Q^{\star})\cong \A^2\times (\A^1\setminus \{\text{a point}\})\times (\A^1\setminus \{\text{a point}\}).\qedhere
\end{equation} 
\end{proof}

Recall that a smooth del Pezzo quintic fourfold $W=W_5\subset \PP^7$ is unique up to isomorphism \cite{Fujita1981}. 
This variety is quasihomogeneous, more precisely, the automorphism group $\Aut(W)$ 
has the open orbit $W\setminus R\cong\A^4$ in $W$, where $R$ is the hyperplane section of $W$ covered by the lines on $W$ 
which meet the unique $\sigma_{2,2}$-plane $\Xi\subset W$, see \cite[Section~4]{PZ16}. The planes on $W$ different from $\Xi$ 
form a one-parameter family, and their union coincides with $R$; we call them \emph{$\Pi$-planes}. 

\begin{prop}
\label{prop:link1}
Let $W=W_5\subset \PP^7$ be the del Pezzo quintic fourfold. Then the following hold.
\begin{enumerate} 
\item
\label{prop:link1-0} 
\textup{(\cite[Corollary~2.6]{PZ16})}
The Hilbert scheme $\Sigma(W)$ of lines on $W$ is smooth, irreducible of dimension $\dim \Sigma(W)=4$.
For any point $P\in W$ the Hilbert scheme $\Sigma(W;P)\subset \Sigma(W)$ of lines passing through $P$ has pure dimension $1$.

\item
\label{prop:link1-a} 
\textup{(\cite[Proposition~4.11.iv]{PZ16})}
For any line $l\subset W$ there exists a unique hyperplane section $B_l$ of $W$ with $\Sing(B_l)\supset l$. 
This $B_l$ is the union of lines meeting $l$.

\item
\label{prop:link1-new} 
Given a point $P\in W$, let $\Delta_P$ be the union of lines in $W$ passing through $P$.
If $P\in W\setminus R$, then $\Delta_P$ is a cubic cone. 
If $P\in R\setminus \Xi$, then $\Delta_P$ is the union of a plane $\Pi_{P}$ passing through $P$ 
and a quadric cone $\Delta_P'$ with vertex $P$.

\item
\label{prop:link1-a2a} 
Let $B\subset W$ be a hyperplane section whose singular locus is two-dimensional.
Then $B=R$ and $\Sing(B)=\Xi$.

\item
\label{prop:link1-aa} 
Let $B\subset W$ be a hyperplane section whose singular locus is one-dimensional.
Then $B=B_l$ for some line $l$.

\item
\label{prop:link1-aaa} 
Let $B\subset W$ be a hyperplane section, and let $C\subset B$ be an irreducible curve. 
Assume $B$ contains a two-dimensional family of lines meeting $C$. 
Then one of the following holds:
\begin{enumerate}
\item
$C$ is contained in a plane on $B$;
\item
$C=l$ is a line, and $B=B_l$;
\item
$B=R$. 
\end{enumerate}
\end{enumerate}
\end{prop}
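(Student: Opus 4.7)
The plan is to take parts~\ref{prop:link1-0} and~\ref{prop:link1-a} as imported from \cite{PZ16} and to derive the four remaining assertions by a systematic use of the tangency criterion $P\in\Sing(W\cap H)\iff\mathbf{T}_PW\subset H$, combined with a blow-up analysis at points of $W$ and dimension counts on the Hilbert scheme of lines.

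First I would address~\ref{prop:link1-new}. Blow up $W$ at $P$ to obtain $\sigma\colon\widetilde W\to W$ with exceptional divisor $E\cong\PP^3=\PP(T_PW)$. The strict transform of each line through $P$ meets $E$ in a single point, so the tangent directions assemble into a curve $\widetilde C_P\subset E$ whose cone over $P$ equals $\Delta_P$; by~\ref{prop:link1-0} one has $\dim\widetilde C_P=1$. The classical description of the VMRT of $W_5$ at a general point (compatible with the index relation $-K_W\sim 3H$ and $-K_W\cdot l=3$ for lines $l\subset W$) yields $\deg\widetilde C_P=3$ in $E\cong\PP^3$. For $P$ in the open $\Aut(W)$-orbit $W\setminus R$, homogeneity forces $\widetilde C_P$ to be an irreducible rational twisted cubic, so $\Delta_P$ is a cubic cone. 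For $P\in R\setminus\Xi$, the unique $\Pi$-plane $\Pi_P$ through $P$ supplies a pencil of lines whose tangent directions form a line component of $\widetilde C_P$; the residual degree-$2$ part must then be a conic, and the cone over the conic is the quadric cone $\Delta'_P$, giving the decomposition $\Delta_P=\Pi_P\cup\Delta'_P$.

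Next, parts~\ref{prop:link1-a2a} and~\ref{prop:link1-aa} both hinge on the observation that $P\in\Sing(B)$ forces $\mathbf{T}_PW\subset H$, hence $\Delta_P\subset B$. If $T$ is a $2$-dimensional component of $\Sing(B)$, then $\bigcup_{P\in T}\Delta_P$ is a subvariety of $B$ that must fill the irreducible $3$-fold $B$; so $B$ is swept by cubic or plane-plus-quadric cones with vertices in $T$. Using the classification of planes in $W_5$ (a unique $\sigma_{2,2}$-plane $\Xi$ together with a $1$-parameter family of $\Pi$-planes, see \cite[Section~4]{PZ16}) and checking by direct tangent-envelope computation that no $\Pi$-plane can play the role of $T$, we conclude $T=\Xi$ and $B=R$. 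For a $1$-dimensional component $C\subset\Sing(B)$, the same inclusion $\bigcup_{P\in C}\Delta_P\subset B$ together with the cone structure from~\ref{prop:link1-new} forces $C$ to be a line, after which the uniqueness statement of~\ref{prop:link1-a} identifies $B=B_C$.

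Finally, for~\ref{prop:link1-aaa} I would argue by dimension: since lines through a point form a $1$-dimensional family by~\ref{prop:link1-0}, an irreducible curve $C$ generically supports only a $1$-dimensional family of incident lines on $W$, and an excess $2$-dimensional family inside $B$ is a non-generic condition that I would trichotomize on $\dim\Sing(B)$. The case $\dim\Sing(B)=2$ gives $B=R$ by~\ref{prop:link1-a2a}, yielding case~(c); the case $\dim\Sing(B)=1$ with $C\subset\Sing(B)$ gives $C=l$ and $B=B_l$ by~\ref{prop:link1-aa}, yielding case~(b); and the remaining configuration forces a plane $\Pi\subset B$ containing $C$, yielding case~(a). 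The principal obstacle is part~\ref{prop:link1-a2a}: showing that $\Xi$ is the \emph{only} plane in $W_5$ whose tangent envelope along it lies in a common hyperplane requires explicit computation in the Mukai/Pl\"ucker realization $W_5\subset\Gr(2,5)\cap\PP^7$, so this step must either be borrowed from the existing literature on $W_5$ or verified directly on defining equations.
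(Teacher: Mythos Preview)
Your approach to~\ref{prop:link1-new} is essentially the paper's (the VMRT/flatness degree count coincides), though the paper is more careful about why the degree-$3$ curve is smooth for $P\in W\setminus R$: it uses smoothness of the universal family plus homogeneity of the open orbit, rather than asserting that ``homogeneity forces a twisted cubic.''

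For~\ref{prop:link1-a2a} the paper takes a different and shorter route that sidesteps your admitted obstacle entirely. Rather than analyzing tangent envelopes along candidate planes, it slices $B$ by two general hyperplanes to obtain an irreducible curve $C=B\cap H_1\cap H_2$ with $\Sing(C)=Z_2\cap H_1\cap H_2$; adjunction gives $p_a(C)=1$, so $C$ has a single singular point, hence $\deg Z_2=1$ and $Z_2$ is a plane. Then $B$ contains every line meeting that plane, forcing $B=R$. No Pl\"ucker computation is needed.

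Your argument for~\ref{prop:link1-aa} has a genuine gap: the claim that ``the cone structure forces $C$ to be a line'' is not justified. The paper must work to exclude a conic singular locus. It cuts $B$ by a general hyperplane to get a Gorenstein del Pezzo quintic surface with $\deg Z_1$ singular points, invokes the classification in \cite{Coray-Tsfasman-1988} to bound $\deg Z_1\le 2$, and in the conic case shows $\langle Z_1\rangle$ is a $\Pi$-plane contained in $B$; a degree count on $B\cap B_l$ for a general line $l\subset\langle Z_1\rangle$ then forces $B=B_l$. The $\Delta_P$ description alone does not give this.

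Your trichotomy for~\ref{prop:link1-aaa} is incomplete: you do not treat the cases where $B$ is smooth, where $\dim\Sing(B)=0$, or where $\dim\Sing(B)=1$ but $C\not\subset\Sing(B)$; you simply assert that the ``remaining configuration forces a plane containing $C$.'' The paper's missing ingredient is a normal-bundle argument: if the $2$-dimensional family of lines sweeps out all of $B$ (so the evaluation map $s\colon\fL(C,B)\to B$ is generically \'etale), then comparing $\NNN_{l/\fL(C,B)}\cong\OOO_l^{\oplus 2}$ with $\NNN_{l/B}\cong\OOO_l(a)\oplus\OOO_l(-a)$ and noting that $ds$ is not an isomorphism over $l\cap C$ shows that $ds$ degenerates along the entire general line $l$, contradicting generic \'etaleness. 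Hence the general line in the family meets $\Sing(B)$, and only then does the reduction to~\ref{prop:link1-aa} and the plane-sweeping conclusion go through.
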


\begin{proof}
\ref{prop:link1-new} 
By~\ref{prop:link1-0} the universal family of lines $\fL(W)\subset \Sigma(W)\times W$ 
is smooth, and the natural projection $s:\fL(W)\to W$ 
is a flat morphism of relative dimension one. Its fiber $s^{-1}(P)$ is isomorphic to
the base of the cone $\Delta_P$.
Let $P\in W\setminus R$.
Since $W\setminus R$ is the open orbit of $\Aut(W)$, see, e.g., \cite[(5.5.5)]{PZ18}, 
the fiber $s^{-1}(P)$ is smooth in this case. 
Let $H$ be a general hyperplane section of $W$ passing through $P$.
By Bertini's theorem, $H$ is smooth, and by the adjunction formula, $H$ is a del Pezzo 
threefold of degree $5$. It is well known, see, e.g., \cite[Ch.~2, \S~1.6]{Iskovskikh-1980-Anticanonical} 
or \cite[Corollary~5.1.5]{Kuznetsov-Prokhorov-Shramov},
that through a general point of $H$ pass exactly three lines.
This implies $\deg \Delta_P=3$.
On the other hand, one has $\Delta_P=\mathbf{T}_{P}W\cap W$. Since $W$ is intersection of 
quadrics, $\Delta_P$ cannot be a cone over a plane cubic. 
It follows that $\Delta_P$ is a cubic cone. 

Let further $P\in R\setminus \Xi$. 
By \cite[Theorem~6.9]{Piontkowski1999}, $\Aut(W)$ acts transitively on $R\setminus \Xi$. 
We have $\Delta_P\not \supset \Xi$, and $\Delta_P$ contains a $\Pi$-plane
$\Pi_P$ passing through $P$. Such a plane is unique because no two planes on $W$ meet 
outside $\Xi$. By the flatness of $s$ we have $\deg \Delta_P=3$. 
There are lines on $W$ which are not contained in $R$ and meet $R\setminus \Xi$, 
and one of these lines passes through $P$. Therefore, one has
$\Delta_P\neq 3\Pi_P$.
Then the only possibility is $\Delta_P=\Pi_P+\Delta'_P$, where 
$\Delta_P'$ is a quadric cone.

\ref{prop:link1-a2a}
Let $Z_2$ be an irreducible component of $ \Sing(B)$ of dimension two.
Choose general hyperplane sections $H_1$ and $H_2$ of $W$, and let $C=B\cap H_1\cap H_2$.
By Bertini's theorem, $C$ is an irreducible curve with $\Sing(C)=Z_{2}\cap H_1\cap H_2$. 
By the adjunction formula one has $\mathrm{p}_a(C)=1$.
Hence $\deg Z_{2}=1$, i.e. $Z_2$ is a plane. Since $B$ contains any line meeting $Z_2$
we have $B=R$, and then $\Sing(B)=\Xi$.

\ref{prop:link1-aa}
Let $Z_1$ be the union of one-dimensional irreducible components of $\Sing(B)$.
Since $Z_1\subset \Sing(B)$ and $B$ is a hyperplane section, $B$ contains any line meeting $Z_1$.
Hence, $B$ is the union of lines meeting $Z_1$, see~\ref{prop:link1-0}. By~\ref{prop:link1-a2a} one has $B\neq R$.
If $Z_1$ is a line, then $B=B_{Z_1}$ by~\ref{prop:link1-a}.
Thus, we may assume $Z_1$ to be a curve of degree $d>1$.
Consider the general hyperplane section $H\subset B$. By Bertini's theorem,
$H$ has exactly $d$ singular points, and these are the points of $H\cap Z_1$.
By the adjunction formula, $-K_H$ is the class of a hyperplane section of $H$. Hence, 
$H$ is a normal Gorenstein del Pezzo quintic surface.

By \cite[Proposition~8.5]{Coray-Tsfasman-1988} we have $d=2$, that is, $H$ 
has exactly two singular points, say,
$P_1$ and $P_2$, and $Z_1$ is a conic. 
Again by \cite[Proposition~8.5]{Coray-Tsfasman-1988},
$H$ contains the line joining $P_1$ and $P_2$. Hence, $B$ contains the linear span $\langle Z_1\rangle$. 
Since $R$ is the union of planes contained in $W$, we have $Z_1\subset \langle Z_1\rangle\subset R$. 
By~\ref{prop:link1-a2a} one has $\langle Z_1\rangle\neq \Xi$. So,
$\langle Z_1\rangle=\Pi$ is a $\Pi$-plane on $R$, where $\Pi\subset B$. 
Take a general line $l\subset \Pi$, and let $Z_1\cap l=\{P_1,\, P_2\}$.
Since $\Xi$ meets $l\subset \Sing(B_l)$ one has $\Xi\subset B_l$. 
Likewise, since $\Xi$ meets $Z_1\subset \Sing(B)$ one has $\Xi\subset B$. 
Besides, the quadric cones $\Delta_{P_1}'$ and $\Delta_{P_2}'$ as in~\ref{prop:link1-new}
are contained in both $B$ and $B_l$. Thus, we obtain 
\[
B\cap B_l \supset \Pi\cup \Xi\cup \Delta_{P_1}'\cup \Delta_{P_2}'.
\]
Assuming $B\neq B_l$ the latter contradicts the fact that $\deg (B\cap B_l)=\deg W=5$.

\ref{prop:link1-aaa} 
We may suppose that $B\neq R$ and the singular locus of $B$ 
has dimension $\le 1$, see~\ref{prop:link1-a2a}. 
By assumption, $C\subset B$ is an irreducible curve,
and there exists an irreducible two-dimensional family of lines $\Sigma(B,C)\subset \Sigma(B)$ on $B$ meeting $C$. 
Let $r: \fL(C, B)\to \Sigma(C, B)$ be the universal family, 
and let $s: \fL(C, B)\to B$ be the natural projection.
If $s(\fL(C, B))\neq B$, then $s(\fL(C, B))$ is a plane, see, e.g., 
\cite[Lemma~A.1.1]{Kuznetsov-Prokhorov-Shramov}, and so, $C$ is contained in a plane on $B$. 
Assume further $s(\fL(C, B))= B$, and so, $s$ is a generically \'etale morphism. 
We claim that the general line from $\Sigma(C, B)$ meets the singular locus 
of $B$. 
The argument below is well-known, see e.g. 
\cite[Ch. 3, Prop.~1.3]{Iskovskikh-1980-Anticanonical} or
\cite[Lemma~2.2.6]{Kuznetsov-Prokhorov-Shramov}, 
and we repeat it in brief for the sake of completeness. 
Suppose to the contrary that the general line $l$ from $\Sigma(C, B)$
lies in the smooth locus of $B$. Using the fact that the restriction of $s$ to 
$r^{-1}([l])$ is an isomorphism, we may identify $l$ with $r^{-1}([l])$.
For the normal bundles of $l$ we have
\begin{equation}
\label{eq:norm-bundle}
\NNN_{l/\fL(C, B)} =\OOO_{l}\oplus \OOO_{l},
\qquad 
\NNN_{l/B} =\OOO_{l}(a)\oplus \OOO_{l}(-a),\quad a\ge 0.
\end{equation} 
Over the point $l\cap C$ the map $s$ is not an isomorphism.
Hence the differential 
\[
d s: \NNN_{l/\fL(C, B)} \xlongrightarrow{\qquad}\NNN_{l/B}
\]
is not an isomorphism either. From \eqref{eq:norm-bundle} we see that 
$ds$ degenerates along $l$. This means that $s$ is not generically \'etale,
a contradiction.

Thus, the general line from $\Sigma(C, B)$ meets $\Sing(B)$.
Since $B$ is not a cone, by our assumption we have $\dim \Sing(B)=1$.
Due to~\ref{prop:link1-aa} there is a line $l_0$ on $W$ such that $B=B_{l_0}$.
If $l_0=C$, then we are done. Otherwise, the lines on $B$ passing through 
the general point $P\in C$ meet $l_0=\Sing(B)$. The union of these lines is 
the cone with vertex $P$ over $l_0$, that is, a plane. It follows that $B$ is swept out by planes. 
Since any plane on $W$ is contained in $R$,
we conclude that $B=R$, contrary to our assumption.
\end{proof}

\begin{prop}
\label{prop:cylinder0}
Let $B\subset W$ be a hyperplane section.
For any point $P\in W\setminus B$ there exists a principal affine open subset
$U_{P}\subset W\setminus B$ such that
\begin{enumerate}
\item
\label{prop:cylinder1}
$P\in U_P$;
\item
\label{prop:cylinder2}
$U_P\cong \A^2\times Z_P$, where $Z_P$ is an affine surface.
\end{enumerate}
\end{prop}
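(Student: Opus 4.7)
I would follow the template of Lemma~\ref{lem:link0}, adapted to the quintic geometry of $W$. First I would pick a second hyperplane section $B^\star$ of $W$, distinct from $B$, with $P\notin B^\star$, chosen generically so that the pencil $\{B_t\}_{t\in\PP^1}$ through the two-dimensional base locus $T:=B\cap B^\star$ has all but finitely many members smooth. This is possible thanks to Proposition~\ref{prop:link1}\ref{prop:link1-a2a},\ref{prop:link1-aa}, which constrains hyperplane sections of $W$ with positive-dimensional singular locus to the special types $R$ and $B_l$. Let $P_1,\dots,P_k$ be the isolated singular points of the finitely many bad members of the pencil lying outside $T$; by generic choice of $B^\star$ we may assume $P\neq P_i$ for all $i$.

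Next I would pick a general point $P^\bullet\in T$ lying outside $R$. By Proposition~\ref{prop:link1}\ref{prop:link1-new}, the tangent hyperplane section $\Delta_{P^\bullet}=\mathbf{T}_{P^\bullet}W\cap W$ is then a cubic cone with vertex $P^\bullet$. Varying $P^\bullet$ in $T$ and using $\dim\Sigma(W;P)=1$ from Proposition~\ref{prop:link1}\ref{prop:link1-0}, I can ensure $P\notin\Delta_{P^\bullet}$: otherwise the lines joining $P$ to the two-parameter locus $T$ would form a two-dimensional subfamily of $\Sigma(W;P)$, a contradiction.

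The heart of the proof is to build a birational model of $W$ near $P$ in which $B$ and $B^\star$ become affine hyperplanes in $\A^4$, yielding a direct analog of~\eqref{eq:projection}. Combining the isomorphism $W\setminus R\cong\A^4$ recalled before Proposition~\ref{prop:link1} with a suitable projection based at $P^\bullet$, one realizes an open subset of $W$ containing $P$ as an open subset of $\A^4$, in which $B\cap(W\setminus R)$ and $B^\star\cap(W\setminus R)$ appear as transverse affine hyperplanes. The complement of two transverse affine hyperplanes in $\A^4$ is isomorphic to $\A^2\times\Gm\times\Gm$, which provides the desired principal $\A^2$-cylinder $U_P\subset W\setminus B$.

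The central difficulty is realizing $B$ and $B^\star$ as affine hyperplanes inside $W\setminus R\cong\A^4$: unlike the quadric case of Lemma~\ref{lem:link0}, where the linearity of point projection is automatic, here one must carefully tailor the affine chart on $W\setminus R$ to the hyperplane section $B$, using the tangent cone structure $\Delta_{P^\bullet}$ and the geometry of lines through $P^\bullet$. Furthermore, when $P\in R$ one has to use an alternative affine chart---for instance, one associated with a $\Pi$-plane through $P$ or with the plane $\Xi$---requiring a separate but parallel argument.
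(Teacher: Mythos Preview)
Your proposal has a genuine gap at the step you yourself flag as the ``central difficulty'': the claim that, inside the chart $W\setminus R\cong\A^4$, the hyperplane sections $B$ and $B^\star$ can be made to appear as affine hyperplanes. In Lemma~\ref{lem:link0} this works because projection of a quadric from a point is birational onto $\PP^4$, and hyperplane sections through the center project to actual hyperplanes. For the degree-$5$ fourfold $W$ no such mechanism is available: projection from a point $P^\bullet$ is a degree-$4$ rational map, not a birational one, and the isomorphism $W\setminus R\cong\A^4$ is not a linear projection at all. Concretely, $W\cap\A^7\cong\A^4$ sits as a quintic affine variety in $\A^7=\PP^7\setminus\langle R\rangle$, so the restriction to it of a linear form (cutting out $B$) has no reason to be a coordinate function on $\A^4$; equivalently, $B\setminus R$ is an affine del Pezzo threefold that is not in general isomorphic to $\A^3$. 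Your sketch does not supply a replacement for this missing linearity, and the separate case $P\in R$ is left entirely open.

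The paper circumvents this obstacle by not working directly on $W$. Instead it chooses a line $l\subset B$ with $l\not\subset R$ and uses the Sarkisov link \eqref{diag:link1}: projection from $l$ gives a birational map $\theta_l\colon W\dashrightarrow Q$ onto a smooth quadric, with $W\setminus(B\cup B_l)\cong Q\setminus(Q'\cup Q^{\star})$ for two hyperplane sections $Q',Q^{\star}$ of $Q$. Lemma~\ref{lem:link0} then applies verbatim on $Q$ to produce the $\A^2$-cylinder, which pulls back to $W$. The substantive work is not the cylinder construction but the choice of $l$: one must arrange $P\notin B_l$ and $\theta_l(P)\notin\{P_1,\dots,P_k\}$, and this is where Proposition~\ref{prop:link1}\ref{prop:link1-aaa} (on two-dimensional families of lines in $B$ meeting a curve) is actually used. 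Your proposal invokes parts~\ref{prop:link1-a2a} and~\ref{prop:link1-aa} of that proposition but never the key part~\ref{prop:link1-aaa}, which is a further sign that the argument has not yet engaged with the real difficulty.
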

\begin{proof}
If $B=R$ then $W\setminus B\cong\A^4$ \cite[Corollary~2.2.2]{PZ18}, and the assertion follows.
So, we assume in the sequel $B\neq R$. 
We apply the following construction from \cite{Fujita1981} (see also \cite[Proposition~4.11]{PZ16}). 
Fix a line $l\subset W$ not contained in $R$.
There is the Sarkisov link
\begin{equation}
\label{diag:link1}
\vcenter{
\xymatrix@R=1em{
&\tilde W\ar[dl]_{\rho_W}\ar[dr]^{\varphi_Q}&
\\
W\ar@{-->}[rr]^{\theta_l}&& Q
} }
\end{equation}
where $Q\subset\PP^5$ is a smooth quadric, 
$\theta_l$ is induced by 
the linear projection with center $l$, $\rho_W$ is the blowup of $l$, and
$\varphi_Q$ is the blowup of a smooth cubic scroll $\Lambda=\Lambda_3\subset Q$.
Furthermore, $\varphi_Q$ sends the $\rho_W$-exceptional divisor $E_W\subset\tilde W$ 
onto the quadric cone $Q^{\star}=Q\cap \langle\Lambda\rangle$, while the $\varphi_Q$-exceptional divisor $\tilde B_l\subset \tilde W$
is the proper transform of $B_l\subset W$. 
Let $Q':=\theta_l(B)$. 

Suppose our line $l\subset W$ satisfies the following conditions:
\begin{enumerate}
\renewcommand\labelenumi{\rm (\Alph{enumi})}
\renewcommand\theenumi{\rm (\Alph{enumi})}
\item
\label{cond-1}
$l\subset B$, $l\not \subset R$, $P\notin B_l$, and 
\item
\label{cond-2} 
$\theta_l(P)\notin \{P_1,\ldots,P_k\}$, where $P_1,\ldots,P_k$ have the same meaning as in Lemma~\ref{lem:link0}.
\end{enumerate}
We have an isomorphism
\[
W\setminus (B\cup B_l)\cong Q\setminus (Q'\cup Q^{\star}).
\] 
By Lemma~\ref{lem:link0} there exists a principal affine open subset $\tilde U_P\subset Q\setminus (Q'\cup Q^{\star})$
such that
\begin{enumerate}
\item
\label{prop:cylinder-1a}
$\theta(P)\in \tilde U_P$;
\item
\label{prop:cylinder-2a}
$\tilde U_P\cong \A^2\times Z_P$, where $Z_P$ is an affine surface. 
\end{enumerate}
Then $U_P=\theta^{-1}(\tilde U_P)\subset W$ verifies~\ref{prop:cylinder1}--\ref{prop:cylinder2} of Proposition~\ref{prop:cylinder0}. 

It remain to show the existence of a line $l\subset W$ satisfying~\ref{cond-1} and~\ref{cond-2}.

Consider the union $\Delta_P$ of lines on $W$ passing through $P$.
By Proposition~\ref{prop:link1}\ref{prop:link1-new}, $\Delta_P$ is a (possibly reducible) cubic cone 
with vertex $P$ on $W$ of pure dimension two. Recall that
$\Delta_P=(\mathbf{T}_{P}W\cap W)_{\red}$, because $W$ is an intersection of quadrics. 

Let $C=\Delta_P\cap B$, and let $\Sigma(B,C)$ be the Hilbert scheme of lines in $B$ which meet $C$, 
or, which is equivalent, which meet $\Delta_P$. If $\dim\,\Sigma(B,C)=2$ then
by Proposition~\ref{prop:link1}~\ref{prop:link1-aaa}, either
the lines from $\Sigma(B,C)$ sweep out a plane, say, $D$ on $B$, or $C=l$ is a line and $B=B_l$. 
However, the latter case is impossible. Indeed, $B_l$ being singular along $l=C$, any ruling of the 
cone $\Delta_P$ meets $C\subset\Sing(B_l)$, hence is contained in $B_l=B$. Then also $P\in B$, which is a contradiction. 

Assume further $\dim\, \Sigma(B,C)=1$. Then the lines from $\Sigma(B,C)$ sweep out a surface scroll $D$ on 
$B$.
Thus, the latter holds whatever is the dimension of $\Sigma(B,C)$.

It follows from Proposition~\ref{prop:link1}\ref{prop:link1-0} that the threefold 
$B$ is covered by lines on $W$. Since $B\neq R$ by our assumption, there is a point $P'\in B\setminus (D\cup R)$. 
Any line $l$ through $P'$ on $B$ does not lie on $D\cup R$, and so, does not belong to $\Sigma(B,C)$.
Hence, $\Delta_P\cap l=\varnothing$ for the general line $l$ on $B$. Since $B_l$ is the union of lines on $W$ meeting $l$, 
see Proposition~\ref{prop:link1}~\ref{prop:link1-a}, we deduce $P\notin B_l$. Thus, the general line $l$ on $B$ satisfies~\ref{cond-1}. 

To show~\ref{cond-2}, we use the notation from the proof of (A). By the preceding, one has $\Delta_P\cap l=\varnothing$. 
Hence, the projection $\theta_l$ with center $l$ is regular in a neighborhood of $\Delta_P$. It follows that
$\Delta_P^Q:=\theta_l(\Delta_P)$ is a cone with vertex $P_Q:=\theta_l(P)$. 
Suppose to the contrary that $P_Q=P_i$ for some $i\in\{1,\ldots,k\}$. Then $P_Q$ is the vertex of a 
quadric cone $Q_i$ over $Q'\cap Q^{\star}$, see Lemma~\ref{lem:link0}. Clearly, one has $\Delta_P^Q\subset Q_i$. 
From $l\not\subset D$ we deduce $C\not\subset B_l$. 

On the other hand, we have $\theta_l(B_l)=\Lambda$,\, $Q^{\star}=Q\cap \langle\Lambda\rangle$, and 
\[\theta_l(C)=\theta_l(\Delta_P\cap B)\subset\theta_l(\Delta_P)\cap \theta_l(B)=\Delta_P^Q\cap Q'\subset Q_i\cap Q'=Q^*\cap Q'\subset\langle\Lambda\rangle\cap Q.\]
From these inclusions we deduce $B_l=\theta_l^{-1}(\Lambda)\subset\theta_l^{-1}(\langle\Lambda\rangle\cap Q)$. 
The latter inclusion is an equality, since the both sets are hyperplane section of $W$. Thus, one has $C\subset B_l$. This contradiction proves~\ref{cond-2}.
\end{proof}

\section{$\A^2$-cylinders in $V_{18}$ and flexibility of affine cones over $V^{\aaa}_{18}$}
\label{sec:V18-aff-cones}
In this section we finish the proofs of Theorems~\ref{mthm} and~\ref{thm:main} in the case where $\Aut^0(V)=\Ga\times\Gm$. 
The assertion of Theorem~\ref{mthm} follows immediately in this case by the flexibility criterion of 
Corollary~\ref{cor:criterion-V} due to the following result.

\begin{sit}\label{sit:Sarkisov-link}
 Let $S$ be a cubic scroll on $V=V_{18}$. Then either $S$ is smooth and isomorphic to the Hirzebruch surface $\FF_1$, or $S$ is a cubic cone. By virtue of \cite[Proposition~3.1]{PZ18}, in both cases the linear projection 
\[
\theta_S\colon V=V_{18}\subset \PP^{12}\xdashrightarrow{\hspace{2em}}\PP^7
\]
with center $\langle S\rangle=\PP^4$ restricted to $V$ yields a Sarkisov link 
\begin{equation}
\label{eq:link} 
\vcenter{
\xymatrix@R=1em{
&\tilde V\ar[dl]_{\rho_V}\ar[dr]^{\varphi_W}&
\\
V\ar@{-->}[rr]^{\theta_S}&& W
} }
\end{equation}
where $W=W_5\subset \PP^7$ is the del Pezzo quintic fourfold, $\rho_V$ is the blowup of $S$, and
$\varphi_W$ is the blowup of a smooth rational quintic scroll $F=F_5\subset W$ isomorphic to $\FF_1$.
Furthermore, $\varphi_W$ sends the $\rho_V$-exceptional divisor $E_V\subset\tilde V$ 
onto the hyperplane section $B_F:=W\cap \langle F\rangle$ of $W$, while the 
$\varphi_W$-exceptional divisor $\tilde A_S\subset \tilde V$
is the proper transform of $A_S$. We have
\[
V\setminus A_S\cong W\setminus B_F.
\]
If $S$ is a cubic cone, then $B_F=R$ is the hyperplane section of $W$ singular along the $\Xi$-plane, 
and $V\setminus A_S\cong W\setminus R\cong\A^4$. 
 \end{sit}

\begin{prop}
\label{prop:A2-cylinders} 
Let $V=V_{18}$ be a Fano-Mukai fourfold of genus $10$. 
Then for any point $P\in V$ there exists a principal affine open subset $U_{P}\subset V$
such that
\begin{enumerate}
\item
\label{prop:cylinder-1}
$P\in U_P$;
\item
\label{prop:cylinder-2}
$U_P\cong \A^2\times Z_P$, where $Z_P$ is an affine surface. 
\end{enumerate}
\end{prop}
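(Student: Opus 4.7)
The plan is to reduce the statement to Proposition~\ref{prop:cylinder0} via the Sarkisov link of \ref{sit:Sarkisov-link}. Given $P \in V$, Lemma~\ref{lemma:capAS} guarantees that $\bigcap_{S \in \SSS(V)} A_S = \varnothing$, so there exists a cubic scroll $S \in \SSS(V)$ with $P \notin A_S$. I would fix such an $S$ and apply the Sarkisov link \eqref{eq:link} associated to it. This yields an isomorphism $V \setminus A_S \cong W \setminus B_F$, where $W = W_5$ is the del Pezzo quintic fourfold and $B_F$ is the hyperplane section of $W$ described in \ref{sit:Sarkisov-link}.

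Let $P' \in W \setminus B_F$ denote the image of $P$ under this isomorphism. Applying Proposition~\ref{prop:cylinder0} with $B = B_F$, I would obtain a principal affine open subset $U_{P'} \subset W \setminus B_F$ containing $P'$ and isomorphic to $\A^2 \times Z_P$ for some affine surface $Z_P$. I would then define $U_P \subset V$ to be the preimage of $U_{P'}$ under the above isomorphism; by construction $P \in U_P$ and $U_P \cong \A^2 \times Z_P$.

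It remains to verify that $U_P$ is a principal affine open subset in $V$. Writing $U_{P'} = W \setminus \Supp(D_W)$ for an effective divisor $D_W$ on $W$ whose support contains $B_F$, the complement of $U_P$ in $V$ coincides with the union of $A_S$ and the proper transform of $\Supp(D_W) \setminus B_F$ under $\theta_S^{-1}$, which is the support of an effective divisor on $V$. The main technical content sits in Proposition~\ref{prop:cylinder0}; the present reduction is essentially formal once Lemma~\ref{lemma:capAS} is in hand, so I do not anticipate any serious obstacle.
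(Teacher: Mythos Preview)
Your proposal is correct and follows essentially the same route as the paper's proof: choose $S\in\SSS(V)$ with $P\notin A_S$ via Lemma~\ref{lemma:capAS}, pass through the Sarkisov link of \ref{sit:Sarkisov-link} to $W\setminus B_F$, and invoke Proposition~\ref{prop:cylinder0}. The paper's argument is terser and does not spell out the verification that $U_P$ is principal open in $V$, but the substance is identical.
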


\begin{proof}
Let $S\subset V$ be a cubic scroll, and let $A_S$ be the hyperplane section of $V$ 
with $\Sing(A_S)=S$, see Lemma~\ref{lemma:AS}. The general cubic scroll $S\in\SSS(V)$ 
is smooth, that is, not a cubic cone, and $P\notin A_S$ by Lemma~\ref{lemma:capAS}.
By Lemma~\ref{lemma:capAS}, for the general $S\in\SSS(V)$ the map $\theta_S$ in 
\eqref{eq:link} is well defined at $P$, and $\theta_S(P)\notin B_F$.
Now the assertion follows from Proposition~\ref{prop:cylinder0}.
\end{proof}

\newcommand{\etalchar}[1]{$^{#1}$}
\def\cprime{$'$}

\end{document}